     \def\section{\@startsection{section}{1}%
      \z@{.7\linespacing\@plus\linespacing}{.5\linespacing}%
     {\bfseries
     \centering
     }}
     \def\@secnumfont{\bfseries}
\newtheorem{thm}{Theorem}[section]
\newtheorem{lem}[thm]{Lemma}
\newtheorem{prop}[thm]{Proposition}
\newtheorem{cor}[thm]{Corollary}
\theoremstyle{definition}
\newtheorem{example}{Example}
\theoremstyle{remark}
\newtheorem{rem}[thm]{Remark}
\numberwithin{equation}{section}
\def\title#1{{\Large\bf  \begin{center} #1 \vspace{0pt} \end{center}  } }
\def\authors#1{{\large\bf \begin{center} #1 \vspace{0pt} \end{center} } }
\def\university#1{{\sl \begin{center} #1 \vspace{0pt} \end{center} } }
\def\inst#1{\unskip $^{#1}$}
\newcommand{\vertiii}[1]{{\left\vert\kern-0.25ex\left\vert\kern-0.25ex\left\vert 
#1 \right\vert\kern-0.25ex\right\vert\kern-0.25ex\right\vert}}
\def \d  {\text {\rm d}}
\def \det   {\text {\rm det}}
\def \diag  {\text {\rm diag}}
\begin{document}
%
%
%

\title{Derivatives of  symplectic eigenvalues and a Lidskii type theorem}

\bigskip

%
%

\authors{Tanvi Jain\inst{1}, 
Hemant Kumar Mishra\inst{2} 
}


\smallskip
%
%

\university{\inst{1} Indian Statistical Institute, New Delhi 110016, India\\tanvi@isid.ac.in}
\university{\inst{2} Indian Statistical Institute, New Delhi 110016, 
India\\hemantmishra1124@gmail.com}%
\date{\today}

\begin{abstract}
Associated with every $2n\times 2n$ real positive definite matrix $A,$
there exist $n$ positive numbers called the symplectic eigenvalues of $A,$
and a basis of $\mathbb{R}^{2n}$ called the symplectic eigenbasis of $A$ corresponding to these numbers.
In this paper, we discuss the differentiability (analyticity) of the symplectic eigenvalues and corresponding symplectic eigenbasis for
differentiable (analytic) map $t\mapsto A(t),$ and compute their derivatives.
We then derive an analogue of Lidskii's theorem for symplectic eigenvalues as an application.
\end{abstract}

\makeatletter
\@setabstract
\makeatother

\vskip0.3in
\footnotetext{\noindent {\bf AMS Subject Classifications:} 15A48, 15A18, 15A45, 15A90, 81P45, 81S10.}

 \footnotetext{\noindent {\bf Keywords : }Positive definite matrix, Williamson's theorem, symplectic eigenvalue, symplectic eigenvector pair, derivative, analyticity, majorisation, Lidskii's theorem.}

%
%

\section{Introduction}

Let $J$ be the $2n\times 2n$ matrix
\begin{equation}
J=\begin{bmatrix}O & I_n\\
-I_n & O\end{bmatrix},\label{eqflr01}
\end{equation}
 where $I_n$ is the $n\times n$ identity matrix.
A $2n\times 2n$ real matrix $M$ is called a {\it symplectic matrix} if
$$M^TJM=J.$$
The set of all symplectic matrices forms a group under multiplication and is denoted by $Sp(2n).$
A result on symplectic matrices, generally known as {\it Williamson's theorem} says that
for every $2n\times 2n$ real positive definite matrix $A$ there exists a symplectic matrix $M$ such that
\begin{equation}
M^TAM=\begin{bmatrix}D & O\\
O & D\end{bmatrix},\label{eq3}
\end{equation}
where $D$ is an $n\times n$ positive diagonal matrix with diagonal entries $d_1(A)\le \cdots \le d_n(A),$ \cite{dms, degosson}.
The positive numbers $d_1(A),\dots,  d_n(A)$ are uniquely determined.
We call these numbers the {\it symplectic eigenvalues} of $A.$
These are the complete invariants of $A$ under the action of the symplectic group $Sp(2n).$
Symplectic eigenvalues occur in different areas of mathematics and physics such as
symplectic geometry, symplectic topology and both classical and quantum mechanics. 
See \cite{sanders, degosson, koenig, safranek}.
Recently there has been a heightened interest in the study of symplectic eigenvalues by both physicists and mathematicians.
A particular reason for this being their growing importance and applications
in quantum information.
See, for instance, \cite{dms, p}.

A positive number $d$ is a symplectic eigenvalue of $A$ if and only if $\pm d$
is an eigenvalue of the Hermitian matrix $\imath A^{1/2}JA^{1/2},$ \cite{degosson, p}.
In principle, it could be possible to derive the properties of symplectic eigenvalues from the well-known properties of eigenvalues of Hermitian matrices.
But due to the complicated form of the Hermitian matrix $\imath A^{1/2}JA^{1/2},$
it is often not feasible to obtain results for symplectic eigenvalues
from the well-developed theory for eigenvalues of Hermitian matrices.
So, it is necessary as well as helpful to develop
independent techniques and theory for symplectic eigenvalues.
Some fundamental inequalities and variational principles on symplectic eigenvalues are given in \cite{bj}.
In this paper we study some questions on symplectic eigenvalues
analogous to some fundamental questions on eigenvalues of Hermitian matrices that have been studied for long.

Eigenvalue problems for Hermitian matrices have a long and rich history.
We can classify these problems to be qualitative and quantitative in nature.
An example of qualitative problems is the study of continuity, differentiability and analyticity of eigenvalues and eigenvectors
as functions of Hermitian matrices when the matrices depend smoothly on a parameter.
These problems have been extensively studied,
(see e.g., \cite{juang, kato, kazdan, magnus, rellich, rogers, xu})
and are of much importance in perturbation theory, differential equations, numerical analysis and physics.
See \cite{kilic, seyranian, xu}.
The quantitative problems include variational principles, eigenvalues of functions of matrices,
majorisation inequalities and computation of eigenvalues and eigenvectors.
There has been much interest in the study of relationships between the eigenvalues of Hermitian matrices $A$ and $B$ and those of their sum $A+B.$
Suppose $\lambda^\uparrow(A)=\left(\lambda_1^\uparrow(A),\ldots,\lambda_n^\uparrow(A)\right)$ denote the tuple of eigenvalues
of an $n\times n$ Hermitian matrix $A$ arranged in increasing order.
In 1912 H. Weyl discovered several relationships between the eigenvalues of sums of Hermitian matrices.
These  include the inequalities:
\begin{equation}
\lambda_j^\uparrow(A+B)\ge\lambda_j^\uparrow(A)+\lambda_1^\uparrow(B) \ \ 1\le j\le n.\label{eq1lr}
\end{equation}
The maximum principle given by Ky Fan in 1949 implies that
for all $1\le k\le n,$
\begin{equation}
\sum\limits_{j=1}^{k}\lambda_{j}^\uparrow(A+B)\ge\sum\limits_{j=1}^{k}\lambda_j^\uparrow(A)+\sum\limits_{j=1}^{k}\lambda_j^\uparrow(B).\label{eq2lr}
\end{equation}
In 1950 V. B. Lidskii proved the inequalities
\begin{equation}
\sum\limits_{j=1}^{k}\lambda_{i_j}^\uparrow(A+B)\ge\sum\limits_{j=1}^{k}\lambda_{i_j}^\uparrow(A)+\sum\limits_{j=1}^{k}\lambda_j^\uparrow(B)\label{eq3lr}
\end{equation}
for all $k=1,\ldots,n$ and $1\le i_1<i_2<\cdots <i_k\le n.$
Inequalities \eqref{eq1lr} and \eqref{eq2lr} are special cases of \eqref{eq3lr}.
Lidskii's inequalities played a fundamental role in the study of eigenvalues of sums of matrices and
proved to be an important stimulant for the much celebrated Horn's conjecture. See, for instance, \cite{rbhh,ful}.
These inequalities have attracted much attention and a number of different proofs  for these are now available in literature.
See \cite{rbhp, mathias}.
But all the proofs are generally more difficult than those for the earlier two families of inequalities \eqref{eq1lr} and \eqref{eq2lr}.

In this paper, we address both the qualitative as well as quantitative problems on symplectic eigenvalues.
We study differentiability of symplectic eigenvalues and also derive a relationship analogous to Lidskii's theorem for these numbers.
Let $\mathbb{P}(n)$ denote the set of all $n\times n$ real positive definite matrices.
For a matrix $P$ in $\mathbb{P}(2n),$ we shall always denote by $d_1(P)\le\cdots\le d_n(P),$
its symplectic eigenvalues
{\it arranged in increasing order}.
We know that each map $P\mapsto d_j(P)$ is continuous. See \cite{bj,idel}.
But this map need not be differentiable, as is shown by the following example.

\begin{example}\label{ex1}
Let $I_4$ denote the $4\times 4$ identity matrix. Clearly
$d_1(I_4)=d_2(I_4)=1.$
We show that the maps $P\mapsto d_1(P)$ and $P\mapsto d_2(P)$ are not even Gateaux differentiable at $I_4.$
Let $B$ be the $4\times 4$ matrix
$$B=I_2\otimes\begin{bmatrix}
0 & 0\\
0 & 1\end{bmatrix}.$$
For any real number $t$ with $|t|<1,$
$I_4+tB$ is the matrix
$$I_4+tB=I_2\otimes\begin{bmatrix}
1 & 0\\
0 & 1+t\end{bmatrix}.$$
The symplectic eigenvalues of $I_4+tB,$ are
\begin{equation*}
d_1(I_4+tB)=\begin{cases}
1+t & -1<t<0\\
1 & 0\le t<1,
\end{cases}
\end{equation*}
and
\begin{equation*}
d_2(I_4+tB)=\begin{cases}
1 & -1<t<0\\
1+t & 0\le t<1.
\end{cases}
\end{equation*}
It is easy to see that
$$\lim\limits_{t\to 0+}\frac{d_1(I_4+tB)-d_1(I_4)}{t}=0$$
and
$$\lim\limits_{t\to 0-}\frac{d_1(I_4+tB)-d_1(I_4)}{t}=1.$$
 This shows that the map $d_1$ is not differentiable.
Similarly we can see that the map $d_2$ is not differentiable at $I_4.$
\end{example}

A symplectic eigenvalue $d$ of $A$ has {\it multiplicity $m$} if
the set $\{i:d_i(A)=d\}$ has exactly $m$ elements,
and is {\it simple} if $m=1.$
We see in Example \ref{ex1}, the symplectic eigenvalue $d_1$ of $I_4$ has multiplicity $2$ and is not differentiable at $I_4.$
We show in Theorem \ref{thm_main1} that if $d_j(A)$
is a simple symplectic eigenvalue of $A,$
then the map $P\mapsto d_j(P)$ and the corresponding symplectic eigenvector pair maps are infinitely differentiable at $A.$
We calculate the first derivatives of these maps in Theorem \ref{thm1b}.

We also study the differentiability and analyticity of symplectic eigenvalues
of positive definite matrices that are dependent on a real parameter.
We show in Theorem \ref{thm_main2},
if $t\mapsto A(t)$ is a real analytic map from an open interval to the space of positive definite matrices,
then we can choose all the symplectic eigenvalues and corresponding symplectic eigenbasis to be real analytic in $t.$
We also see that in this case, the maps $t\mapsto d_j\left(A(t)\right)$ are piecewise real analytic.
See Theorem \ref{thm3.2.pa}.

We now describe the quantitative problems that we study in this paper.
Recently there has been much interest in finding relationships between the symplectic eigenvalues of sums of positive definite matrices and
those of individual matrices.
T. Hiroshima in \cite{h} proved the following relationship for symplectic eigenvalues that is analogous to \eqref{eq2lr}.
$$\sum\limits_{j=1}^{k}d_j(A+B)\ge\sum\limits_{j=1}^{k}d_j(A)+\sum\limits_{j=1}^{k}d_j(B)\ \ 1\le k\le n.$$
In \cite{rbhs} R. Bhatia addressed the inequality analogous to \eqref{eq1lr}.
He showed that
$$d_j(A+B)\ge d_j(A)+d_1(B)$$
for all $j=1,\ldots,n$ when $A$ and $B$ are of the form
$$A=\begin{bmatrix}D & O\\
O & D\end{bmatrix},\ B=\begin{bmatrix}X & O\\
O & X^{-1}\end{bmatrix},$$
where $D$ is the diagonal matrix $\diag(d_1(A),\ldots,d_n(A))$ and $X$ is any $n\times n$ positive definite matrix.
As an application of our results on analyticity of symplectic eigenvalues,
we derive relationships analogous to Lidskii's inequalities (Theorem \ref{thm1l}).
More precisely, we show that for all $k=1,\ldots,n$ and all $1\le i_1<\cdots< i_k\le n,$
$$\sum\limits_{j=1}^{k}d_{i_j}(A+B)\ge\sum\limits_{j=1}^{k}d_{i_j}(A)+\sum\limits_{j=1}^{k}d_j(B).$$
As for the case of eigenvalues of Hermitian matrices, these greatly generalise the inequalities given in \cite{rbhs} and \cite{h}.
In this process, we introduce a notion similar to the notion of projections, that we call as ``symplectic projections'', and
 give an equivalent statement for Williamson's theorem in terms of symplectic projections.

The paper is organised as follows.
Some definitions and preliminary results on symplectic eigenvalues are summarised in Section 2.
In Section 3, we study the differentiability of symplectic eigenvalues and symplectic eigenvectors maps when the symplectic eigenvalues are simple,
and compute their first order derivatives.
In Section 4 we discuss differentiability and analyticity of these maps for curves of positive definite matrices
when the symplectic eigenvalues are not
necessarily simple.
As applications of our results, we derive a symplectic analogue of Lidskii's theorem
and give a perturbation bound in Section 5.

\section{Preliminaries}


Let $\mathbb{R}^{n}$ denote the space of all $n$ tuples over the real numbers,
and let $\mathbb{M}(n)$ denote the space of all $n\times n$ real matrices.
The bilinear form $(\cdot,\cdot)$ on $\mathbb{R}^{2n}$ given by
\begin{equation}
(x,y)=\sum\limits_{i=1}^{n}(x_iy_{n+i}-x_{n+i}y_i)\label{eq1}
\end{equation}
is called the {\it symplectic inner product} on $\mathbb{R}^{2n}.$
This can also be written as
$$(x,y)=\langle x,Jy\rangle.$$
Here $J$ is the $2n\times 2n$ matrix given by \eqref{eqflr01},
and $\langle \cdot,\cdot\rangle$ denotes the Euclidean inner product on $\mathbb{R}^{2n}.$
It is easy to see that a matrix $M\in Sp(2n)$ if and only if it preserves the symplectic inner product on $\mathbb{R}^{2n},$ i.e.,
$$(Mx,My)=\langle Mx,JMy\rangle=\langle x,Jy\rangle=(x,y).$$
A pair of vectors $(u,v)$ is called {\it normalised} if $\langle u,Jv\rangle=1.$
Two pairs of vectors $(u_1,v_1)$ and $(u_2,v_2)$ are called {\it symplectically orthogonal} if
\begin{equation}
\langle u_i,Jv_j\rangle=\langle u_i,Ju_j\rangle=\langle v_i,Jv_j\rangle=0\label{eq2}
\end{equation}
for $i\ne j,$ $i,j=1,2.$
A subset $\{u_1,\ldots,u_m,v_1,\ldots,v_m\}$
of $\mathbb{R}^{2n}$ is called a {\it symplectically orthogonal (orthonormal)} set if the pairs of vectors $(u_i,v_i)$ are mutually symplectically orthogonal (and normalised).
If $m=n,$ then the symplectically orthonormal set is called a {\it symplectic basis} of $\mathbb{R}^{2n}.$

The following proposition is an easy consequence of Williamson's Theorem.

\begin{prop}\label{prop1}
Let $A$ be a $2n\times 2n$ real positive definite matrix with symplectic eigenvalues $d_1,\ldots,d_n.$
There exists a symplectic basis $\{u_1,\ldots,u_n,v_1,\ldots,v_n\}$ of $\mathbb{R}^{2n}$
such that for each $i=1,\ldots,n,$
\begin{equation}
Au_i=d_iJv_i,\ Av_i=-d_iJu_i,\label{eq4}
\end{equation}

\end{prop}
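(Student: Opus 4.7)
The plan is to extract everything directly from Williamson's theorem by packaging the equations $M^{T}AM=\diag(D,D)$ and $M^{T}JM=J$ column by column.

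First I would apply Williamson's theorem to obtain a symplectic matrix $M\in Sp(2n)$ with
$$M^{T}AM=\begin{bmatrix} D & O \\ O & D\end{bmatrix},$$
where $D=\diag(d_{1},\ldots,d_{n})$. Write $M=[U\mid V]$ where $U,V$ are the $2n\times n$ blocks formed by the first $n$ and last $n$ columns of $M$, and let $u_{i},v_{i}$ denote their respective columns. The identity $M^{T}JM=J$, read column by column, gives $\langle u_{i},Jv_{j}\rangle=\delta_{ij}$, $\langle u_{i},Ju_{j}\rangle=0$ and $\langle v_{i},Jv_{j}\rangle=0$, which is exactly the statement that $\{u_{1},\ldots,u_{n},v_{1},\ldots,v_{n}\}$ is a symplectic basis of $\mathbb{R}^{2n}$.

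Next I would verify the eigenvalue-type relations by a single matrix manipulation. From $M^{T}JM=J$ and $J^{-1}=-J$, one obtains
$$(M^{T})^{-1}=JMJ^{-1}=-JMJ.$$
Combined with $A=(M^{T})^{-1}\diag(D,D)\,M^{-1}$, this yields
$$AM=-JMJ\begin{bmatrix} D & O\\ O & D\end{bmatrix} =-JM\begin{bmatrix} O & D \\ -D & O\end{bmatrix} =[\,JVD \ \mid \ -JUD\,].$$
Comparing columns of $AM=[AU\mid AV]$ with the right-hand side gives $Au_{i}=d_{i}Jv_{i}$ and $Av_{i}=-d_{i}Ju_{i}$, which is precisely \eqref{eq4}.

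There is no real obstacle here; the only thing to be careful about is the sign bookkeeping with $J^{-1}=-J$ and with the block shuffle induced by right-multiplication by $J\,\diag(D,D)$. Everything else is an immediate transcription of Williamson's theorem and the definition of $Sp(2n)$.
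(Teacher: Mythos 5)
Your proof is correct, and it is exactly the argument the paper has in mind: the paper states this proposition without proof as ``an easy consequence of Williamson's theorem,'' and your columnwise reading of $M^TJM=J$ together with the identity $(M^T)^{-1}=-JMJ$ applied to $AM=(M^T)^{-1}\operatorname{diag}(D,D)$ supplies precisely that consequence, with the signs handled correctly.
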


A pair of vectors $(u_i,v_i)$ that satisfies \eqref{eq4} is called a {\it symplectic eigenvector pair} of $A$ corresponding to the symplectic eigenvalue $d_i.$
If the pair, in addition, is normalised, it is called a
{\it normalised symplectic eigenvector pair} of $A.$

The proofs of the next two results are straightforward and left to the reader.
\begin{lem}\label{lem4}
Let $A\in\mathbb{P}(2n),$ and let $d$ be a positive number.
The following statements are equivalent.
\begin{itemize}

\item[(i)] $d$ is a symplectic eigenvalue of $A$
and $(u,v)$ is a corresponding symplectic eigenvector pair.
\item[(ii)] $\pm d$ is an eigenvalue of $\imath JA$
and $u\mp \imath v$ is a corresponding eigenvector.
\item[(iii)] $\pm d$ is an eigenvalue
of $\imath A^{1/2}JA^{1/2}$ and $A^{1/2}u\mp \imath A^{1/2}v$ is a corresponding eigenvector.
\end{itemize}
\end{lem}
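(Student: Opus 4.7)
The plan is to verify the equivalences by direct algebraic manipulation, exploiting two identities: $J^2=-I_{2n}$ and the fact that $A^{1/2}$ is invertible (since $A$ is positive definite). The key observation is that the defining pair of real equations in \eqref{eq4} combines into two complex equations, one for $u-\imath v$ and one for $u+\imath v$, which are exactly the eigenvalue equations in (ii) for $\pm d$.

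For (i)$\Leftrightarrow$(ii), I would compute $\imath JA(u\mp\imath v)$ assuming \eqref{eq4}. Using $Au=dJv$ and $Av=-dJu$, I get
\begin{equation*}
\imath JA(u-\imath v)=\imath J(Au)+J(Av)=\imath d J^2 v - d J^2 u = d u-\imath d v = d(u-\imath v),
\end{equation*}
and a similar computation yields $\imath JA(u+\imath v)=-d(u+\imath v)$. This proves (i)$\Rightarrow$(ii). For the converse, given (ii), define $u,v$ as the real and imaginary parts (up to sign) of the eigenvectors and read the same identity backwards, separating real and imaginary parts to recover \eqref{eq4}.

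For (ii)$\Leftrightarrow$(iii), I would use the substitution $z=A^{1/2}w$, which is a bijection on $\mathbb{R}^{2n}$ (and on its complexification) since $A^{1/2}$ is invertible. Multiplying $\imath JAw=\lambda w$ on the left by $A^{1/2}$ and writing $Aw=A^{1/2}(A^{1/2}w)$ gives $\imath A^{1/2}JA^{1/2}z=\lambda z$; conversely, multiplying $\imath A^{1/2}JA^{1/2}z=\lambda z$ on the left by $A^{-1/2}$ and substituting $z=A^{1/2}w$ returns $\imath JAw=\lambda w$. Applied with $w=u\mp\imath v$ this gives the eigenvector $z=A^{1/2}u\mp\imath A^{1/2}v$ claimed in (iii).

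No step here is a real obstacle; the only thing to be careful about is tracking the $\mp$ signs and the factor of $\imath$ consistently, and noting that $A^{1/2}$ commutes with complex scalars so the real/imaginary decomposition transports cleanly. Once both equivalences are established, the lemma follows by transitivity.
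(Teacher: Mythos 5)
Your argument is correct, and it is exactly the direct verification the paper has in mind (the paper leaves this lemma to the reader as "straightforward"): combining the real equations $Au=dJv$, $Av=-dJu$ into the complex eigenvalue equations for $\imath JA$ via $J^2=-I$, and then conjugating by the invertible $A^{1/2}$ to pass to $\imath A^{1/2}JA^{1/2}$. The only point worth making explicit in a write-up is the converse (ii)$\Rightarrow$(i): since the eigenvector is given in the form $u\mp\imath v$ with $u,v$ real, separating real and imaginary parts of $\imath JA(u-\imath v)=d(u-\imath v)$ recovers \eqref{eq4}, and the pair $(u,v)$ is nontrivial because the eigenvector is nonzero.
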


Since $d_1,\ldots,d_n$ denote the symplectic eigenvalues arranged in increasing order,
we usually denote any collection of symplectic eigenvalues by $\tilde{d}_1,\ldots,\tilde{d}_n.$

\begin{prop}\label{prop1sev}
For $A$ in $\mathbb{P}(2n),$
the set $\{(\tilde{u}_j,\tilde{v}_j):j=1,\ldots,m\}$ is a symplectically orthogonal set of symplectic eigenvector pairs of $A$ corresponding to the symplectic eigenvalues
$\tilde{d}_1,\ldots,\tilde{d}_m,$ respectively, if and only if
$\{A^{1/2}\tilde{u}_j- \imath A^{1/2}\tilde{v}_j:j=1,\ldots,m\}$ is an orthogonal set of eigenvectors of $\imath A^{1/2}JA^{1/2}$ corresponding to the eigenvalues $\tilde{d}_1,\ldots,\tilde{d}_m$ respectively.
Further, for each $j=1,\ldots,k$
\begin{equation}
\|A^{1/2}\tilde{u}_j-\imath A^{1/2}\tilde{v}_j\|^2=2\tilde{d}_j\langle \tilde{u}_j,J\tilde{v}_j\rangle.\label{eq1sev}
\end{equation}
\end{prop}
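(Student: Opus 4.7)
The plan is to reduce the entire proposition to a single Hermitian inner product calculation for the vectors $\tilde{w}_j := A^{1/2}\tilde{u}_j - \imath A^{1/2}\tilde{v}_j$. The eigenvector correspondence, namely that $\tilde{w}_j$ is an eigenvector of $\imath A^{1/2}JA^{1/2}$ for $\tilde{d}_j$ iff $(\tilde{u}_j,\tilde{v}_j)$ is a symplectic eigenvector pair of $A$ for $\tilde{d}_j$, follows directly from Lemma \ref{lem4}(iii) applied to each index $j$. Thus only the mutual orthogonality for $i\ne j$ and the $i=j$ norm formula remain, and I aim to extract both from one computation.

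Assuming each $(\tilde{u}_j,\tilde{v}_j)$ is a symplectic eigenvector pair for $\tilde{d}_j$, I would expand
\[
\tilde{w}_i^{\,*}\tilde{w}_j = (\tilde{u}_i+\imath \tilde{v}_i)^T A\,(\tilde{u}_j-\imath \tilde{v}_j),
\]
substitute the relations $A\tilde{u}_j=\tilde{d}_j J\tilde{v}_j$ and $A\tilde{v}_j=-\tilde{d}_j J\tilde{u}_j$ from \eqref{eq4}, and use the skew-symmetry $J^T=-J$ (which yields $\langle \tilde{v}_i, J\tilde{u}_j\rangle = -\langle \tilde{u}_j, J\tilde{v}_i\rangle$) to collapse everything to
\[
\tilde{w}_i^{\,*}\tilde{w}_j = \tilde{d}_j\bigl[\langle \tilde{u}_i, J\tilde{v}_j\rangle+\langle \tilde{u}_j, J\tilde{v}_i\rangle\bigr] + \imath\,\tilde{d}_j\bigl[\langle \tilde{u}_i, J\tilde{u}_j\rangle+\langle \tilde{v}_i, J\tilde{v}_j\rangle\bigr].
\]
Setting $i=j$ and noting that $\langle x, Jx\rangle=0$ (again by $J^T=-J$), the imaginary part vanishes and I obtain the desired identity $\|\tilde{w}_j\|^2 = 2\tilde{d}_j\langle \tilde{u}_j, J\tilde{v}_j\rangle$. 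For $i\ne j$, symplectic orthogonality kills both brackets, giving the forward implication of the orthogonality equivalence.

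The converse is the subtle point and I expect it to be the main obstacle. The hypothesis $\tilde{w}_i^{\,*}\tilde{w}_j=0$ provides only two real scalar equations, whereas symplectic orthogonality of the two pairs is four real conditions: $\langle \tilde{u}_i, J\tilde{v}_j\rangle$, $\langle \tilde{u}_j, J\tilde{v}_i\rangle$, $\langle \tilde{u}_i, J\tilde{u}_j\rangle$, $\langle \tilde{v}_i, J\tilde{v}_j\rangle$ must each vanish. To recover the missing two real equations, I would invoke Lemma \ref{lem4}(iii) a second time to observe that $\overline{\tilde{w}_i}=A^{1/2}\tilde{u}_i+\imath A^{1/2}\tilde{v}_i$ is an eigenvector of $\imath A^{1/2}JA^{1/2}$ for the eigenvalue $-\tilde{d}_i$. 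Since $-\tilde{d}_i<0<\tilde{d}_j$ and the operator is Hermitian, the orthogonality $\overline{\tilde{w}_i}^{\,*}\tilde{w}_j=0$ holds automatically. An identical expansion gives
\[
\overline{\tilde{w}_i}^{\,*}\tilde{w}_j = \tilde{d}_j\bigl[\langle \tilde{u}_i, J\tilde{v}_j\rangle-\langle \tilde{u}_j, J\tilde{v}_i\rangle\bigr] + \imath\,\tilde{d}_j\bigl[\langle \tilde{u}_i, J\tilde{u}_j\rangle-\langle \tilde{v}_i, J\tilde{v}_j\rangle\bigr],
\]
and adding and subtracting the two identities forces all four quantities to vanish, completing the symplectic orthogonality. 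The crux of the whole argument is this counting mismatch and its resolution via the automatic pairing with the conjugate (negative-eigenvalue) eigenvectors $\overline{\tilde{w}_i}$.
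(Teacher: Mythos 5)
Your proof is correct; the paper leaves this proposition to the reader as a straightforward verification, and your argument is exactly that intended route — Lemma \ref{lem4} for the pairwise eigenvector correspondence plus direct expansion of the Hermitian products $\tilde{w}_i^{\,*}\tilde{w}_j$ with $A\tilde{u}_j=\tilde{d}_jJ\tilde{v}_j$, $A\tilde{v}_j=-\tilde{d}_jJ\tilde{u}_j$. Your handling of the only delicate point, the converse, is also sound: the two missing real equations are supplied by the automatic orthogonality $\overline{\tilde{w}_i}^{\,*}\tilde{w}_j=0$, which holds because $-\tilde{d}_i<0<\tilde{d}_j$ are distinct eigenvalues of the Hermitian matrix $\imath A^{1/2}JA^{1/2}$, and adding/subtracting the two expansions forces all four symplectic products to vanish.
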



\begin{cor}\label{cor3sev}
Any two symplectic eigenvector pairs corresponding to two distinct symplectic eigenvalues of a real positive definite matrix are symplectically orthogonal.
\end{cor}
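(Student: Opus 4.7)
The plan is to deduce symplectic orthogonality from ordinary orthogonality of eigenvectors of a Hermitian matrix, via the dictionary set up in Lemma \ref{lem4} and Proposition \ref{prop1sev}. The entire symplectic eigenvalue structure of $A$ is faithfully encoded in the spectral data of the Hermitian matrix $\imath A^{1/2}JA^{1/2}$, and on that side the required orthogonality is merely the spectral theorem.

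Concretely, let $(u_1,v_1)$ and $(u_2,v_2)$ be symplectic eigenvector pairs of $A\in\mathbb{P}(2n)$ corresponding to distinct symplectic eigenvalues $d_1\neq d_2$. First I would note that $\imath A^{1/2}JA^{1/2}$ is Hermitian, since $J^T=-J$ and $A^{1/2}$ is symmetric. By Lemma \ref{lem4}(iii), the complex vectors
\[
w_j := A^{1/2}u_j - \imath A^{1/2}v_j, \qquad j=1,2,
\]
are eigenvectors of $\imath A^{1/2}JA^{1/2}$ with real eigenvalues $d_1$ and $d_2$ respectively. Since these two eigenvalues are distinct, the spectral theorem for Hermitian matrices immediately gives $\langle w_1,w_2\rangle = 0$. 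Applying the relevant direction of Proposition \ref{prop1sev} to the two-element family $\{(u_1,v_1),(u_2,v_2)\}$ then converts this orthogonality back into symplectic orthogonality of the pairs, which is exactly the desired conclusion.

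There is no serious obstacle here: the corollary is essentially a direct repackaging of the Hermitian spectral theorem through the correspondence already established in Section 2. The only point worth flagging is that Proposition \ref{prop1sev} does not presuppose normalisation of the pairs for the equivalence to hold, so the conclusion applies to arbitrary (not necessarily normalised) symplectic eigenvector pairs, exactly as the statement requires.
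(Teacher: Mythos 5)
Your argument is correct: each pair corresponds via Lemma \ref{lem4}(iii) to an eigenvector of the Hermitian matrix $\imath A^{1/2}JA^{1/2}$, distinctness of $d_1,d_2$ gives orthogonality of those eigenvectors, and the ``if'' direction of Proposition \ref{prop1sev} (which indeed needs no normalisation) transfers this back to symplectic orthogonality of the pairs. This is essentially the derivation the paper intends, since the corollary is stated as an immediate consequence of Proposition \ref{prop1sev}.
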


\begin{prop}\label{prop1b}
Let $A$ be a $2n\times 2n$ real positive definite matrix, and
let $d$ be a symplectic eigenvalue of $A$ with multiplicity $m.$
Let $r_0=\min \{|d-\tilde{d}|: \tilde{d} \text{ is a symplectic eigenvalue of } A, \tilde{d}\ne d\}.$
Then for any positive number $r< r_0,$
there exists an open neighbourhood $U$ of $A$ in $\mathbb{P}(2n)$
such that every $P$ in $U$ has exactly $m$ symplectic eigenvalues (counted with multiplicities)
contained in $(d-r,d+r).$
\end{prop}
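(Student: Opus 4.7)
The plan is to reduce Proposition~\ref{prop1b} to the continuity of each individual symplectic eigenvalue map $P \mapsto d_j(P)$, which is already cited in the paragraph preceding Example~\ref{ex1}, and then finish by a counting argument that separates the indices at which $d_j(A) = d$ from those at which $d_j(A) \ne d$.

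First I would locate $d$ in the increasing list of symplectic eigenvalues of $A$: let $j_0 \ge 0$ be the integer for which $d_{j_0+1}(A) = \cdots = d_{j_0+m}(A) = d$, with $d_j(A) < d$ for $j \le j_0$ and $d_j(A) > d$ for $j > j_0+m$. By the very definition of $r_0$, every index $j \notin \{j_0+1,\dots,j_0+m\}$ satisfies $|d_j(A) - d| \ge r_0$.

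Next I would set $\varepsilon := (r_0 - r)/2 > 0$ and, using continuity of each of the $n$ maps $P \mapsto d_j(P)$, choose for each $j \in \{1,\ldots,n\}$ a neighbourhood $U_j$ of $A$ in $\mathbb{P}(2n)$ on which $|d_j(P) - d_j(A)| < \varepsilon$. Taking $U := \bigcap_{j=1}^{n} U_j$, for every $P \in U$ the triangle inequality gives $|d_j(P) - d| < \varepsilon < r$ whenever $d_j(A) = d$, and $|d_j(P) - d| \ge |d_j(A) - d| - \varepsilon \ge r_0 - \varepsilon > r$ otherwise. So exactly the $m$ indices $j_0+1, \ldots, j_0+m$ contribute symplectic eigenvalues of $P$ lying in $(d-r, d+r)$, delivering the required count.

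There is no real obstacle once one accepts continuity of the symplectic eigenvalues; the only point that needs care is choosing the slack $\varepsilon$ small enough to prevent any of the $n-m$ ``far'' symplectic eigenvalues of $P$ from being perturbed into $(d-r, d+r)$. A self-contained alternative, should one wish to avoid citing continuity directly, is to invoke Lemma~\ref{lem4} to identify $d$ as an eigenvalue of multiplicity $m$ of the Hermitian matrix $\imath A^{1/2} J A^{1/2}$, note that the map $P \mapsto \imath P^{1/2} J P^{1/2}$ is continuous on $\mathbb{P}(2n)$, and then apply a standard spectral-separation argument (for instance, via the Riesz projection along a contour encircling only the eigenvalue $d$) to the Hermitian perturbation, which localises exactly $m$ eigenvalues near $d$ and hence exactly $m$ symplectic eigenvalues of $P$ in $(d-r, d+r)$.
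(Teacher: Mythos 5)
Your overall strategy is exactly the paper's: use continuity of each ordered symplectic eigenvalue map $P\mapsto d_j(P)$ (cited to \cite{bj,idel}), locate the block of indices where $d_j(A)=d$, and shrink the neighbourhood so that the ``near'' eigenvalues stay near $d$ while the ``far'' ones stay away. The paper does this by keeping $d_i(P)<d-r$ and $d_{i+m+1}(P)>d+r$ and forcing $d_{i+1}(P),\ldots,d_{i+m}(P)$ into $(d-r,d+r)$, which is the same counting argument you give.

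However, one step fails as written: your choice $\varepsilon=(r_0-r)/2$ does not guarantee $\varepsilon<r$, yet your chain of inequalities $|d_j(P)-d|<\varepsilon<r$ for the indices with $d_j(A)=d$ relies on it. If, say, $r_0=1$ and $r=0.1$, then $\varepsilon=0.45>r$, and an eigenvalue with $d_j(A)=d$ is only guaranteed to satisfy $|d_j(P)-d|<0.45$; it may leave the open interval $(d-r,d+r)$, so your $U$ could contain matrices $P$ with fewer than $m$ symplectic eigenvalues in $(d-r,d+r)$, contradicting the claimed count. You correctly flagged the need to keep the far eigenvalues out, but the symmetric requirement --- keeping the $m$ near eigenvalues \emph{inside} the interval of radius $r$, not just of radius $\varepsilon$ --- imposes the additional constraint $\varepsilon\le r$. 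The repair is immediate: take $\varepsilon=\min\{r,\,r_0-r\}$ (or half of it); then $|d_j(P)-d|<\varepsilon\le r$ for the $m$ indices with $d_j(A)=d$, and $|d_j(P)-d|>r_0-\varepsilon\ge r$ for the rest, which gives exactly the paper's conclusion. Your sketched alternative via $\imath A^{1/2}JA^{1/2}$ and a Riesz-projection separation is also viable, but once the slack is fixed the elementary continuity argument already suffices and coincides with the paper's proof.
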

 
\begin{proof}
Let $d_1(A)\le\cdots\le d_i(A)<d_{i+1}(A)=\cdots=d_{i+m}(A)<d_{i+m+1}(A)\le \cdots\le d_n(A)$
be the $n$ symplectic eigenvalues of $A$ with $d_{i+1}(A)=\cdots=d_{i+m}(A)=d.$
By our choice of $r$ we see that
$$d_i(A)<d-r<d+r<d_{i+m+1}(A).$$
Since each $d_j$ is continuous,
we can find an open neighbourhood $U$ of $A$ such that for every $P\in U,$
$$d_{i+1}(P),\ldots,d_{i+m}(P)\in (d-r,d+r),$$
$$d_i(P)<d-r\textrm{ and }d_{i+m+1}(P)>d+r.$$
Thus for every $P\in U,$ there are exactly $m$ symplectic eigenvalues $d_{i+1}(P),\ldots,d_{i+m}(P)$ of $P$ that are contained in $(d-r,d+r).$
The cases $d=d_1$ and $d=d_n$ can be proved in a similar way.
\end{proof}

A subspace $W$ of $\mathbb{R}^{2n}$ is called a {\it symplectic subspace} of $\mathbb{R}^{2n}$ if for every $x\in W$ there exists a $y\in W$
such that $\langle x,Jy\rangle\ne 0.$
(See \cite{degosson} Section 1.2.1.)
If $W$ is a symplectic subspace of $\mathbb{R}^{2n},$
then its dimension is an even number and
there exists a symplectically orthonormal set that spans it.
Let $d$ be a symplectic eigenvalue of $A,$
and let $S$ be the set of all
symplectic eigenvector pairs of $A$ corresponding to $d.$
Suppose $W$ is the span of the set
$\{u,v:(u,v)\in S\}.$
It is easy to see that $W$ is a symplectic subspace of $\mathbb{R}^{2n}.$
If $d$ has multiplicity $k,$
then the dimension of $W$ is $2k.$
\vskip.1in
We end this section with an observation on the extension of Williamson's theorem and the notion of symplectic eigenvalues to positive semidefinite matrices.

\begin{rem}
Let $A$ be a $2n\times 2n$ real positive semidefinite matrix. Then there exists
a symplectic matrix $M$ such that \eqref{eq3} holds
for some $n\times n$ nonnegative diagonal matrix $D$ if and only if
the kernel of $A$ is a symplectic subspace of $\mathbb{R}^{2n}.$
If $\textrm{dim}\, \textrm{Ker}\, A=2m,$
then exactly $m$ diagonal entries of $A$ are zero.
In this case, we call the nonnegative diagonal entries of $D$ to be the symplectic eigenvalues of the positive semidefinite matrix $A.$
\end{rem}

Let $\mathbb{P}_s(2n)$ be the set of all $2n\times 2n$ real positive semidefinite matrices $A$
such that $\textrm{Ker} \, A$ is a symplectic subspace of $\mathbb{R}^{2n}.$
We can see from the proof of Theorem 7 of \cite{bj}.
that the maps $d_j$ taking $A$ to $d_j(A)$ are continuous on $\mathbb{P}_s(2n)$ for all $j=1,\ldots,n.$

\section{Simple symplectic eigenvalues}

The following theorem is the key result that will be used to prove the main theorem of this section.

\begin{thm}\label{thm2}
Let $A$ be a $2n\times 2n$ real positive definite matrix.
Suppose $d_0$ is a simple symplectic eigenvalue of $A$ with corresponding normalised symplectic eigenvector pair $(u_0,v_0).$
Then there exists an open subset $U$ of $\mathbb{P}(2n)$ containing $A,$
and $C^\infty$ maps $d:U\to\mathbb{R}$ and $u,v:U\to \mathbb{R}^{2n}$ that satisfy the following conditions.
\begin{itemize}
\item[(i)] For every $P\in U,$ $d(P)$ is a simple symplectic eigenvalue of $P$ with the corresponding normalised symplectic eigenvector pair $(u(P),v(P)).$
\item[(ii)] $d(A)=d_0,$ $u(A)=u_0$ and $v(A)=v_0.$
\item[(iii)] \begin{equation}
\langle u_0,Ju(P)\rangle+\langle v_0,Jv(P)\rangle=0.\label{eq1c}
\end{equation}
\end{itemize}
\end{thm}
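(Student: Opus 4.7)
The plan is to translate the symplectic eigenvalue problem for $A$ into a simple-eigenvalue problem for the Hermitian matrix $H(P) := \imath P^{1/2}JP^{1/2}$ via Lemma~\ref{lem4}, and then exploit the Riesz spectral projection. The map $P\mapsto P^{1/2}$ is $C^\infty$ on $\mathbb{P}(2n)$, so $P\mapsto H(P)$ is $C^\infty$. Since the eigenvalues of $H(A)$ are $\pm d_1(A),\ldots,\pm d_n(A)$, the simplicity of $d_0$ as a symplectic eigenvalue together with $d_0>0$ forces $d_0$ to be a simple eigenvalue of $H(A)$. I would then choose a contour $\gamma$ in $\mathbb{C}$ enclosing only the eigenvalue $d_0$ of $H(A)$, and set
$$E(P) := \frac{1}{2\pi\imath}\oint_\gamma (zI - H(P))^{-1}\, dz.$$
On an open neighbourhood $U_0$ of $A$, $E(P)$ is a $C^\infty$ rank-one spectral projection onto the eigenspace of a unique simple eigenvalue $d(P)$ of $H(P)$ inside $\gamma$, which is therefore a simple symplectic eigenvalue of $P$; the map $P\mapsto d(P) = \Tr\bigl(H(P)E(P)\bigr)$ is $C^\infty$ with $d(A)=d_0$.

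Next I would build the eigenvector pair. Set $w_0 := A^{1/2}(u_0 - \imath v_0)$ and $\tilde w(P) := E(P)w_0$; this is $C^\infty$ with $\tilde w(A) = w_0 \neq 0$, so on a smaller neighbourhood $U_1\subseteq U_0$, $\tilde w(P)$ is a nonzero eigenvector of $H(P)$ for $d(P)$. Define $\tilde u(P), \tilde v(P)\in\mathbb{R}^{2n}$ by $P^{-1/2}\tilde w(P) = \tilde u(P) - \imath \tilde v(P)$; Lemma~\ref{lem4}(iii) then gives that $(\tilde u(P), \tilde v(P))$ is a symplectic eigenvector pair of $P$ for $d(P)$. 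By Proposition~\ref{prop1sev}, $\|\tilde w(P)\|^2 = 2d(P)\langle\tilde u(P), J\tilde v(P)\rangle$, which equals $2d_0>0$ at $P=A$; hence $\langle\tilde u, J\tilde v\rangle>0$ on a neighbourhood of $A$, and rescaling by $\hat u := \tilde u/\sqrt{\langle\tilde u, J\tilde v\rangle}$, $\hat v := \tilde v/\sqrt{\langle\tilde u, J\tilde v\rangle}$ yields a smooth normalized symplectic eigenvector pair with $(\hat u(A), \hat v(A)) = (u_0, v_0)$.

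The final step is to correct the residual rotational freedom so that \eqref{eq1c} holds. For any smooth real-valued $\theta(P)$, the rotated pair
\begin{align*}
u(P) &:= \cos\theta(P)\,\hat u(P) + \sin\theta(P)\,\hat v(P), \\
v(P) &:= -\sin\theta(P)\,\hat u(P) + \cos\theta(P)\,\hat v(P)
\end{align*}
is again a normalized symplectic eigenvector pair of $P$ for $d(P)$, as a direct check using \eqref{eq4} and the antisymmetry of $J$ shows. I would set
$$F(P,\theta) := \langle u_0, Ju(P)\rangle + \langle v_0, Jv(P)\rangle,$$
a $C^\infty$ real-valued function with $F(A,0) = 0$. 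Using $\langle u_0, Jv_0\rangle = 1$ and $\langle v_0, Ju_0\rangle = -1$ (antisymmetry of $J$), one computes $\partial_\theta F(A,0) = 2 \neq 0$, so the implicit function theorem produces a $C^\infty$ function $\theta:U\to\mathbb{R}$ on a neighbourhood $U\subseteq U_1$ of $A$ with $\theta(A) = 0$ and $F(P,\theta(P))\equiv 0$. The rotated pair with this $\theta(P)$ is the desired $(u(P), v(P))$ satisfying (i)--(iii). The main technical point is this phase-fixing step, where one must verify both the rotational symmetry of normalized eigenvector pairs and the nonvanishing of $\partial_\theta F(A,0)$; the earlier steps are essentially the standard Riesz projection / holomorphic functional calculus machinery transported across Lemma~\ref{lem4}.
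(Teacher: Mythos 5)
Your proof is correct, but it takes a genuinely different route from the paper. The paper applies the implicit function theorem once, directly to the system $\varphi(P,x,d)=\bigl((\imath JP-d)x,\ \langle x_0,Jx\rangle+2\imath\bigr)=0$ built on the non-Hermitian matrix $\imath JP$ with $x_0=u_0-\imath v_0$; the invertibility of the linearization is checked by an adjugate computation, and the complex normalization $\langle x_0,Jx(P)\rangle=-2\imath$ is chosen so that its vanishing real part is exactly \eqref{eq1c}, so (iii) comes for free, after which only the rescaling by $\langle\tilde u,J\tilde v\rangle^{-1/2}$ is needed. You instead split the work: smooth selection of the simple eigenvalue and an eigenvector of the Hermitian matrix $\imath P^{1/2}JP^{1/2}$ via the Riesz projection $E(P)$ (this is essentially the route the paper's own remark says would recover (i) and (ii) from Rellich/Kato through the smoothness of $P\mapsto P^{1/2}$), transport back through Lemma \ref{lem4} and Proposition \ref{prop1sev} with the rescaling, and then handle (iii) separately by exhibiting the residual $SO(2)$ gauge freedom and solving $F(P,\theta)=0$ with a scalar implicit function theorem, using $\partial_\theta F(A,0)=2\neq 0$ (your computations of the rotation invariance, of $\|\tilde w(A)\|^2=2d_0$, and of $\partial_\theta F(A,0)$ all check out, and rank-one-ness of $E(P)$ does give simplicity of $d(P)$ as a symplectic eigenvalue via Proposition \ref{prop1sev}). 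What each approach buys: yours avoids the adjugate/determinant argument, leans on standard functional-calculus machinery, and makes the phase ambiguity and its resolution explicit; the paper's single implicit-function setup avoids the square root and the contour integral entirely and delivers the constraint \eqref{eq1c} automatically as part of the normalization, which is why the authors chose it.
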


\begin{proof}
Since $d_0$ is a simple symplectic eigenvalue of $A$ with symplectic eigenvector pair $(u_0,v_0),$
by $\text{Lemma \ref{lem4}},$ it is a simple eigenvalue of $\imath JA$ with eigenvector $x_0=u_0-\imath v_0.$
Also $\langle x_0,Jx_0\rangle=-2\imath \langle u_0,Jv_0\rangle=-2\imath .$
Define the map $\varphi:\mathbb{P}(2n)\times\mathbb{C}^{2n}\times \mathbb{C}\to\mathbb{C}^{2n}\times \mathbb{C}$ as
$$\varphi(P,x,d)=\left((\imath JP-d)x,\langle x_0,Jx\rangle+2\imath \right).$$
Clearly, $\varphi$ is a $C^\infty$ map and $\varphi(A,x_0,d_0)=0.$
Let $D_2\varphi$ denote the partial derivative of $\varphi$ with respect to $(x,d).$ 
Then
$$D_2\varphi(A,x_0,d_0)=\begin{bmatrix} \imath JA-d_0 & -x_0\\
x_0^*J & 0\end{bmatrix}.$$
Thus $\det\, D_2\varphi(A,x_0,d_0)=-\langle x_0,J(\imath JA-d_0)^{\textrm{adj}}x_0\rangle.$
Since $d_0$ is a simple eigenvalue of $\imath JA,$ $0$ is a simple eigenvalue of $\imath JA-d_0.$
So we have $(\imath JA-d_0)^{\textrm{adj}}x_0=cx_0,$ where $c$ is the product of all nonzero eigenvalues of $\imath JA-d_0.$
This gives
$$\langle x_0,J(\imath JA-d_0)^{\textrm{adj}}x_0\rangle=c\langle x_0,Jx_0\rangle=-2\imath c\ne 0.$$
Hence by the Implicit function theorem,
there exists an open subset $U$ of $\mathbb{P}(2n)$ containing $A,$
and $C^\infty$ maps $d:U\to\mathbb{C}$ and $x:U\to\mathbb{C}^{2n}$ that satisfy
$\imath JPx(P)=d(P)x(P),$ $\langle x_0,Jx(P)\rangle=-2\imath,$
$x(A)=x_0$ and $d(A)=d_0.$
Clearly $x(P)\ne 0,$ and hence $d(P)$ is an eigenvalue of $\imath JP.$
All eigenvalues of $\imath JP$ are real.
Hence $d(P)$ is real.
Since $d_0>0,$ we can assume that $d(P)>0$ for all $P\in U.$
By Lemma \ref{lem4}, we see that $d(P)$ is a symplectic eigenvalue of $P$ for every $P\in U.$
Also since $D_2\varphi(P,x(P),d(P))$ is invertible, $(\imath JP-d(P))^{\textrm{adj}} \neq 0$ and this implies that $d(P)$ has multiplicity $1.$
Let $x(P)=\tilde{u}(P)-\imath \tilde{v}(P)$ be the Cartesian decomposition of $x(P).$
By $\text{Lemma } \ref{lem4}$
we see that $(\tilde{u}(P),\tilde{v}(P))$ is a symplectic eigenvector pair of $P$ corresponding to $d(P).$
Also, the maps $P\mapsto \tilde{u}(P)$ and $P\mapsto \tilde{v}(P)$ are $C^\infty$ on $U,$
and $\tilde{u}(A)=u_0$ and $\tilde{v}(A)=v_0.$
We know that $\langle u_0,Jv_0\rangle=1.$
Hence we can assume that $\langle \tilde{u}(P),J\tilde{v}(P)\rangle>0$ for all $P\in U.$
This implies that the map $P\mapsto \langle \tilde{u}(P),J\tilde{v}(P)\rangle^{-1/2}$ is $C^\infty$ on $U.$
Define the maps $u,v:U\to\mathbb{R}^{2n}$ as
$$u(P)=\langle \tilde{u}(P),J\tilde{v}(P)\rangle^{-1/2}\tilde{u}(P)$$  
and
$$ v(P)=\langle \tilde{u}(P),J\tilde{v}(P)\rangle^{-1/2}\tilde{v}(P).$$
The maps $u$ and $v$ are $C^\infty$ and $(u(P),v(P))$ forms a normalised symplectic eigenvector pair of $P$ corresponding to $d(P).$
This shows the existence of infinitely differentiable maps $d,u,v$ on $U$ that satisfy (i) and (ii).
Moreover, since the real part of $\langle x_0,Jx(P)\rangle$ is zero,
$$\langle u_0,Ju(P)\rangle+\langle v_0,Jv(P)\rangle=0$$
This proves (iii).
\end{proof}

\begin{rem}
Since $d_0$ is a simple symplectic eigenvalue of $A$ if and only if it is a simple eigenvalue of $\imath A^{1/2}JA^{1/2},$ (see Proposition \ref{prop1sev})
and the square root map is infinitely differentiable on real positive definite matrices,
we can obtain (i) and (ii) of Theorem \ref{thm2} from the corresponding result on eigenvalues in \cite{rellich}.
But we give an independent proof as \eqref{eq1c} is required in the
computation of the derivatives of symplectic eigenvector pair in Theorem \ref{thm1b}.
\end{rem}

The main theorem of this section is as follows:

\begin{thm}\label{thm_main1}
Let $A\in\mathbb{P}(2n),$ and
suppose that $d_j(A)$ is simple.
Then there exists a neighbourhood $U$ of $A$ in $\mathbb{P}(2n)$ such that
for every $P\in U,$ $d_j(P)$ is simple and the map $P\mapsto d_j(P)$ is infinitely differentiable on $U.$
Further, if $(u_0,v_0)$ is a normalised symplectic eigenvector pair of $A$ corresponding to $d_j(A),$
then there exist infinitely differentiable maps $u_j,v_j:U\to\mathbb{R}^{2n}$ such that for every $P$ in $U$ $(u_j(P),v_j(P))$
is a normalised symplectic eigenvector pair of $P$ corresponding to $d_j(P),$
$u_j(A)=u_0$ and $v_j(A)=v_0,$
and $u_j(P),v_j(P)$ satisfy \eqref{eq1c}. 
\end{thm}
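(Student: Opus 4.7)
The plan is to derive Theorem \ref{thm_main1} essentially as a corollary of Theorem \ref{thm2} combined with the continuity of the indexed symplectic eigenvalue maps $d_i$ stated earlier in the excerpt. Theorem \ref{thm2} already produces, starting from the simple symplectic eigenvalue $d_0 := d_j(A)$ and a chosen normalised symplectic eigenvector pair $(u_0,v_0)$, a neighbourhood $U_1$ of $A$ and $C^\infty$ maps $d, u, v$ on $U_1$ such that $d(P)$ is a simple symplectic eigenvalue of $P$, $(u(P), v(P))$ is a corresponding normalised symplectic eigenvector pair, $d(A)=d_0$, $u(A)=u_0$, $v(A)=v_0$, and \eqref{eq1c} is satisfied. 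So the only substantive task is to show that this smooth branch $d(P)$ coincides with the indexed map $P\mapsto d_j(P)$ on a suitably small neighbourhood, because then smoothness of $d_j$ is inherited and we can set $u_j := u$ and $v_j := v$.

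To identify $d(P)$ with $d_j(P)$, I would first isolate $d_j(A)$ from the other symplectic eigenvalues of $A$. Since $d_j(A)$ is simple, one has $d_{j-1}(A) < d_j(A) < d_{j+1}(A)$ (with the obvious modifications when $j=1$ or $j=n$). Choose
\[
r < \tfrac{1}{2}\min\bigl\{d_j(A) - d_{j-1}(A),\; d_{j+1}(A) - d_j(A)\bigr\}
\]
(with the one-sided interpretation at the endpoints). Since each $d_i : \mathbb{P}(2n) \to \mathbb{R}$ is continuous, there is a neighbourhood $U_2$ of $A$ on which $|d_i(P) - d_i(A)| < r/2$ for every $i$. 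For $P \in U_2$ this forces $d_{j-1}(P) < d_j(A) - r$, $d_{j+1}(P) > d_j(A) + r$, and $d_j(P) \in (d_j(A)-r, d_j(A)+r)$, so $d_j(P)$ is the unique symplectic eigenvalue of $P$ in the open interval $(d_j(A)-r, d_j(A)+r)$; in particular $d_j(P)$ is simple.

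Now shrink further to
\[
U := U_1 \cap U_2 \cap \{ P : |d(P) - d_0| < r \},
\]
which is still an open neighbourhood of $A$ since $d$ is continuous on $U_1$ and $d(A)=d_0$. For every $P \in U$, the value $d(P)$ is a symplectic eigenvalue of $P$ lying in $(d_j(A)-r, d_j(A)+r)$, and by the uniqueness just established it must equal $d_j(P)$. Hence $d_j \equiv d$ on $U$ and $d_j$ is $C^\infty$ on $U$; defining $u_j := u|_U$ and $v_j := v|_U$ gives $C^\infty$ maps that form normalised symplectic eigenvector pairs of $P$ for $d_j(P)$, agree with $(u_0,v_0)$ at $A$, and satisfy the gauge condition \eqref{eq1c} inherited from Theorem \ref{thm2}.

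The only delicate point in this argument is the identification step: Theorem \ref{thm2} delivers a smooth curve of symplectic eigenvalues but knows nothing about its position in the ordered list $d_1(P) \le \cdots \le d_n(P)$. Separating $d_j(A)$ from its neighbours via the gap $r$ and using the simultaneous continuity of all $d_i$ to trap $d_j(P)$ in a small interval is what locks the anonymous branch $d(P)$ onto the indexed map $d_j$. Once this matching is in hand, everything else is a direct transcription from Theorem \ref{thm2}.
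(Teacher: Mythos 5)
Your proposal is correct and follows essentially the same route as the paper: invoke Theorem \ref{thm2} for the smooth anonymous branch $(d,u,v)$, isolate $d_j(A)$ by a gap $r$, and use continuity of the ordered symplectic eigenvalue maps to force $d(P)=d_j(P)$ on a smaller neighbourhood. The only cosmetic difference is that you establish uniqueness of the eigenvalue in the interval via the continuity of all $d_i$ directly (in effect reproving Proposition \ref{prop1b} for multiplicity one), whereas the paper cites that proposition; the identification argument is the same.
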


\begin{proof}
If $d_j(A)$ is a simple symplectic eigenvalue of $A,$
then by Theorem \ref{thm2},
we can find an open neighbourhood $V$ of $A$ in $\mathbb{P}(2n),$
and $C^\infty$ maps $d:V\to\mathbb{R}$ and $u,v:V\to\mathbb{R}^{2n}$ that satisfy (i)-(iii) of Theorem \ref{thm2};
i.e., 
$d(P)$ is a simple symplectic eigenvalue of $P$
and $(u(P),v(P))$ is a corresponding normalised symplectic eigenvector pair such that
 $d(A)=d_j(A),$ $u(A)=u_0, v(A)=v_0,$
and $u(P),v(P)$ satisfy \eqref{eq1c}.
Let $r$ be a positive number with $r<\min\{d_{j+1}(A)-d_j(A),d_j(A)-d_{j-1}(A)\}.$
By the continuity of the map $P\mapsto d(P)$ and Proposition \ref{prop1b}, we can assume that for every $P$ in $V,$
$d(P)$ is the only symplectic eigenvalue of $P$ contained in $(d_j(A)-r,d_j(A)+r).$
By (\cite{bj}, Theorem 7), we know that the map $P\mapsto d_j(P)$ is continuous.
Hence there exists an open neighbourhood $W$ of $A$ such that $d_j(P)\in (d_j(A)-r,d_j(A)+r)$
for every $P$ in $W.$
But this implies that $d(P)=d_j(P)$ for every $P\in V\cap W.$
Take $U=V\cap W.$
Hence the map $d_j$ is infinitely differentiable on $U$ with the corresponding
normalised symplectic eigenvector maps $u,v$ that satisfy the required conditions. 
\end{proof}

Next we compute the derivatives of the symplectic eigenvalue map $d_j$
and its corresponding symplectic eigenvector pair map at $A$
when $d_j(A)$ has multiplicity $1.$
We note here that if $(u,v)$ is a normalised symplectic eigenvector pair of $A$ corresponding to a simple symplectic eigenvalue $d,$
then any normalised symplectic eigenvector pair $(x,y)$ corresponding to $d$ is of the form
$$x=au-bv\textrm{ and }y=bu+av$$
where $a,b$ are real numbers satisfying $a^2+b^2=1.$

\begin{thm}\label{thm1b}
Let $A\in\mathbb{P}(2n)$ be such that $d_j(A)$ is simple,
and let $(u_j,v_j)$ be a normalised symplectic eigenvector pair map through $(u_j(A),v_j(A))$
obtained from Theorem \ref{thm_main1}.
Suppose $M$ is any symplectic matrix given by \eqref{eq3}.
Then the derivatives $Dd_j(A),$ $Du_j(A)$ and $Dv_j(A)$ at a $2n \times 2n$ symmetric matrix $B$ are given by
\begin{equation}
Dd_j(A)(B)=\frac{\langle u_j(A),Bu_j(A)\rangle+\langle v_j(A),Bv_j(A)\rangle}{2},\label{eq6}
\end{equation}
\begin{equation}
Du_j(A)(B)=M\hat{D}M^TBu_j(A)+M\overline{D}JM^TBv_j(A),\label{eq7}
\end{equation}
and
\begin{equation}
Dv_j(A)(B)=M\hat{D}M^TBv_j(A)-M\overline{D}JM^TBu_j(A),\label{eq8}
\end{equation}
where $\hat{D}$ and $\overline{D}$ are the $2n\times 2n$ diagonal matrices with respective diagonal entries given by
\begin{equation}
\left(\hat{D}\right)_{kk}=\begin{cases}
\frac{d_k(A)}{d_j^2(A)-d_k^2(A)} & k\ne j,\, 1\le k\le n\\
-\frac{1}{4d_j(A)} & k=j,\, 1\le k\le n\\
\left(\hat{D}\right)_{ii} & k=n+i,\, 1\le i\le n,
\end{cases}\label{eq9}
\end{equation}
and
\begin{equation}
\left(\overline{D}\right)_{kk}=\begin{cases}
\frac{d_j(A)}{d_j^2(A)-d_k^2(A)} & k\ne j,\, 1\le k\le n\\
\frac{1}{4d_j(A)} & k=j,\, 1\le k\le n\\
\left(\overline{D}\right)_{ii}  & k=n+i,\, 1\le i\le n.
\end{cases}\label{eq10}
\end{equation}
\end{thm}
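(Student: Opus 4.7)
The plan is to differentiate the defining relations of $(u_j(P), v_j(P), d_j(P))$ at $P = A$ in the direction of a symmetric matrix $B$, and then solve the resulting linear system by using the spectral decomposition of $\imath JA$ provided by the Williamson basis. Write $u_j, v_j, d_j$ for $u_j(A), v_j(A), d_j(A)$, and $u_j', v_j', d_j'$ for their derivatives at $A$ applied to $B$. Differentiating $P u_j(P) = d_j(P) J v_j(P)$ and $P v_j(P) = -d_j(P) J u_j(P)$ at $A$ yields
\begin{align*}
A u_j' - d_j J v_j' &= d_j' J v_j - B u_j,\\
A v_j' + d_j J u_j' &= -d_j' J u_j - B v_j.
\end{align*}
Differentiating $\langle u_j(P), J v_j(P)\rangle = 1$ gives $\langle u_j', J v_j\rangle + \langle u_j, J v_j'\rangle = 0$, and part (iii) of Theorem \ref{thm2} yields the analogous constraint $\langle u_j, Ju_j'\rangle + \langle v_j, J v_j'\rangle = 0$.

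To establish \eqref{eq6} I would pair the first vector equation with $u_j$ and the second with $v_j$, use symmetry of $A$ together with the eigenvector relations $Au_j = d_j J v_j$ and $Av_j = -d_j J u_j$ to rewrite the terms $u_j^T A u_j'$ and $v_j^T A v_j'$, and then invoke the two scalar constraints above. Summing the two resulting scalar equations cancels the cross-terms involving $\langle u_j, J v_j'\rangle$ and $\langle v_j, J u_j'\rangle$, leaving exactly $2 d_j' = \langle u_j, B u_j\rangle + \langle v_j, B v_j\rangle$.

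For \eqref{eq7} and \eqref{eq8} I would pass to complex coordinates. Let $x_j := u_j - \imath v_j$, which by Lemma \ref{lem4} is an eigenvector of $\imath JA$ with eigenvalue $d_j$, and set $p_j' := u_j' - \imath v_j'$. Subtracting $\imath$ times the second of the two vector equations from the first, and then premultiplying by $\imath J$ (using $J^2 = -I$), consolidates the pair into a single complex equation
\[
(\imath JA - d_j I)\, p_j' = d_j' \, x_j - \imath J B\, x_j.
\]
Writing the columns of $M$ as $e_1, \dots, e_n, f_1, \dots, f_n$, the vectors $x_k := e_k - \imath f_k$ and $\bar{x}_k := e_k + \imath f_k$ form a complete complex eigenbasis of $\imath J A$ for the eigenvalues $d_k$ and $-d_k$, and the symplectic orthonormality of $\{e_k, f_k\}$ gives the pairings $\bar{x}_k^T J x_m = -2\imath\, \delta_{km}$, $x_k^T J \bar{x}_m = 2\imath\, \delta_{km}$, and $x_k^T J x_m = \bar{x}_k^T J \bar{x}_m = 0$. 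Expanding $p_j' = \sum_k \alpha_k x_k + \sum_k \beta_k \bar{x}_k$ and reading off coefficients against these pairings gives, for $k \ne j$,
\[
\alpha_k = \frac{\bar{x}_k^T B x_j}{2(d_j - d_k)}, \qquad \beta_k = -\frac{x_k^T B x_j}{2(d_j + d_k)},
\]
together with $\beta_j = - x_j^T B x_j/(4 d_j)$, while $\alpha_j$ is left free. A short check shows that the two scalar constraints from the first paragraph force $\alpha_j = 0$.

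Finally I would pass back to real form via $u_j' = \Re\, p_j'$ and $v_j' = -\Im\, p_j'$. Splitting each $\alpha_k, \beta_k$ into real and imaginary parts, combining them over the common denominator $2(d_j^2 - d_k^2)$, and reading off the coefficients of $e_k, f_k$ reproduces exactly the entries of $\hat{D}$ and $\overline{D}$ in \eqref{eq9}, \eqref{eq10}, and matches the matrix expression $M\hat{D}M^T B u_j + M \overline{D} J M^T B v_j$ for $u_j'$ and the analogous expression for $v_j'$. The main obstacle is this last bookkeeping step: one must verify that the apparent singularity at $k = j$ in the formulas $d_k/(d_j^2 - d_k^2)$ and $d_j/(d_j^2 - d_k^2)$ is correctly replaced by the regular values $-1/(4 d_j)$ and $1/(4 d_j)$ coming from $\beta_j$ (with $\alpha_j = 0$), and that the resulting expressions are insensitive to the rotational ambiguity of $M$ within the $j$-th symplectic eigenspace, which holds because $\hat{D}_{jj} = \hat{D}_{n+j,n+j}$ and $\overline{D}_{jj} = \overline{D}_{n+j,n+j}$ make the $(j, n+j)$-block of $\hat{D}$ and $\overline{D}$ a scalar multiple of the identity.
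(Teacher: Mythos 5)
Your proposal is correct, and up to \eqref{eq6} it coincides with the paper's argument: differentiate the defining relations $Pu_j(P)=d_j(P)Jv_j(P)$, $Pv_j(P)=-d_j(P)Ju_j(P)$, $\langle u_j(P),Jv_j(P)\rangle=1$ and \eqref{eq1c} at $A$, pair with $u_j(A)$ and $v_j(A)$, and add. For \eqref{eq7} and \eqref{eq8} you differ in execution: the paper stays in real coordinates, expands $Du_j(A)(B)$ and $Dv_j(A)(B)$ in the Williamson basis formed by the columns of $M$, and for each $k$ solves a $4\times 4$ real linear system (invertible when $k\ne j$, and resolved at $k=j$ by the two differentiated constraints), whereas you complexify: with $p_j'=u_j'-\imath v_j'$ the two differentiated vector equations collapse to $(\imath JA-d_jI)p_j'=d_j'x_j-\imath JBx_j$, and expanding $p_j'$ in the eigenbasis $\{e_k-\imath f_k,\ e_k+\imath f_k\}$ of $\imath JA$ and pairing with the form $(w,z)\mapsto w^TJz$ replaces each $4\times 4$ system by two scalar equations; your two real constraints amount to the single condition $\overline{x}_j^{\,T}Jp_j'=0$, i.e.\ $\alpha_j=0$, and $\beta_j$ comes from the same pairing used for $k\ne j$. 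I verified that your coefficients, converted back via $u_j'=\Re\, p_j'$, $v_j'=-\Im\, p_j'$, reproduce exactly the entries \eqref{eq9}--\eqref{eq10} and the matrix expressions \eqref{eq7}--\eqref{eq8}, and that your closing remark correctly settles the two delicate points: the regular $k=j$ entries and the independence of the choice of $M$, since $e_je_j^T+f_jf_j^T$ and $e_jf_j^T-f_je_j^T$ are invariant under rotating the pair $(e_j,f_j)$. One notational caution: you use $x_j$ both for $u_j(A)-\imath v_j(A)$ and for the $j$-th complex column $e_j-\imath f_j$ of $M$; these differ by a unimodular factor when the $j$-th pair of $M$ is a rotation of $(u_j(A),v_j(A))$, so in mixed pairings such as $x_j^TBx_j$ you should specify which vector is meant --- the invariance you note at the end is precisely what makes this harmless. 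Overall your route buys a cleaner linear algebra (diagonal scalar equations instead of $4\times4$ systems), at the cost of the real-versus-complex bookkeeping at the final step, while the paper's real computation keeps every intermediate quantity directly comparable with the stated formulas.
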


\begin{proof}
Since $d_j(A)$ is simple, by Theorem \ref{thm_main1}, we know that the map $d_j$ is infinitely differentiable at $A.$
Since $(u_j,v_j)$ is a normalised symplectic eigenvector pair map obtained from Theorem \ref{thm_main1},
we have
\begin{align}
Pu_j(P)&=d_j(P)Jv_j(P),\label{eq11}\\
Pv_j(P)&=-d_j(P)Ju_j(P),\label{eq12} \\
\langle u_j(P),Jv_j(P)\rangle&=1,\label{eq13}\\
\langle u_j(A),Ju_j(P)\rangle+\langle v_j(A),Jv_j(P)\rangle&=0.\label{eq16}
\end{align}
Differentiating \eqref{eq11} and \eqref{eq12} at $A,$
we see that for every $2n \times 2n$ real symmetric matrix $B$
\begin{equation}
Bu_j(A)+ADu_j(A)(B)=Dd_j(A)(B)Jv_j(A)+d_j(A)JDv_j(A)(B),\label{eq14}
\end{equation}
and
\begin{equation}
Bv_j(A)+ADv_j(A)(B)=-Dd_j(A)(B)Ju_j(A)-d_j(A)JDu_j(A)(B).\label{eq15}
\end{equation}
Taking the inner product of \eqref{eq14} with $u_j(A)$ and using the fact that $\langle u_j(A),Jv_j(A)\rangle=1,$
we get
\begin{align}
 \langle u_j(A),Bu_j(A)\rangle & +\langle u_j(A),ADu_j(A)(B)\rangle \nonumber \\
& =Dd_j(A)(B)+\langle u_j(A),d_j(A)JDv_j(A)(B)\rangle \label{eq13a}
\end{align}
Since
\begin{eqnarray*}
\langle u_j(A),ADu_j(A)(B)\rangle& =&\langle Au_j(A),Du_j(A)(B)\rangle\\
&=& d_j(A)\langle Du_j(A)(B),Jv_j(A)\rangle,
\end{eqnarray*}
we can write \eqref{eq13a} as
\begin{eqnarray}
Dd_j(A)(B) &=& \langle u_j(A),Bu_j(A)\rangle+d_j(A)\langle Du_j(A)(B),Jv_j(A)\rangle\nonumber\\
& &\ \ -d_j(A)\langle u_j(A),JDv_j(A)(B)\rangle.\label{eq17}
\end{eqnarray}
Similarly, taking the inner product of \eqref{eq15} with $v_j(A),$ we get
\begin{eqnarray}
Dd_j(A)(B)&=&\langle v_j(A),Bv_j(A)\rangle-d_j(A)\langle Du_j(A)(B),Jv_j(A)\rangle\nonumber\\
& &\ \ + d_j(A)\langle u_j(A),JDv_j(A)(B)\rangle.\label{eq18}
\end{eqnarray}
Adding \eqref{eq17} and \eqref{eq18} finally gives \eqref{eq6}.

We next compute the derivatives $Du_j(A)$ and $Dv_j(A).$

Let the columns of $M$ be $\tilde{u}_1,\ldots,\tilde{u_n},\tilde{v_1},\ldots,\tilde{v}_n.$
Clearly these vectors form a symplectic eigenbasis of $\mathbb{R}^{2n}$ corresponding to $A.$
We can express $Du_j(A)(B)$ and $Dv_j(A)(B)$ uniquely as
\begin{equation*}
Du_j(A)(B)=\sum\limits_{k=1}^{n}\alpha_k\tilde{u}_k+\sum\limits_{k=1}^{n}\beta_k\tilde{v}_k
\end{equation*}
and
\begin{equation*}
Dv_j(A)(B)=\sum\limits_{k=1}^{n}\gamma_k\tilde{u}_j+\sum\limits_{k=1}^{n}\delta_k\tilde{v}_k,
\end{equation*}
where $\alpha_k=\langle Du_j(A)(B),J\tilde{v}_k\rangle,$ $\beta_k=-\langle Du_j(A)(B),J\tilde{u}_k\rangle,$\\
$\gamma_k=\langle Dv_j(A)(B),J\tilde{v}_k\rangle$ and $\delta_k=-\langle Dv_j(A)(B),J\tilde{u}_k\rangle$
for all $k=1,\ldots,n.$
Since $d_j(A)$ is simple, we can assume that $\tilde{u}_j=au_j(A)-bv_j(A)$ and $\tilde{v}_j=bu_j(A)+av_j(A)$ for some $a,b \in \mathbb{R}$ with $a^2+b^2=1.$
Thus 
\begin{equation}
\langle \tilde{u}_k,Jv_j(A)\rangle=\langle u_j(A),J\tilde{v}_k\rangle=\delta_{kj} a \label{eq20}
\end{equation}
 and
\begin{equation}
\langle \tilde{u}_k,Ju_j(A)\rangle=\langle \tilde{v}_k,Jv_j(A)\rangle=\delta_{kj} b \label{eq21}
\end{equation}
for all $k=1,\ldots,n.$
Here $\delta_{jk}=0$ if $j \neq k$ and $\delta_{jk}=1$ otherwise.
Taking inner product of \eqref{eq14} with $\tilde{u}_k$ we get
\begin{align*}
& \langle \tilde{u}_k,Bu_j(A)\rangle+\langle \tilde{u}_k,ADu_j(A)(B)\rangle\\
&\ \ =Dd_j(A)(B)\langle \tilde{u}_k,Jv_j(A)\rangle+d_j(A)\langle \tilde{u}_k,JDv_j(A)(B)\rangle.
\end{align*}
Using \eqref{eq20} and the values of $\alpha_k$ and $\delta_k,$ this reduces to 
\begin{equation}
d_k(A)\alpha_k-d_j(A)\delta_k=a Dd_j(A)(B) \delta_{kj}-\langle \tilde{u}_k,Bu_j(A)\rangle.\label{eq22}
\end{equation}
Similarly, taking inner products of \eqref{eq14} with $\tilde{v}_k,$ and of \eqref{eq15} with $\tilde{u}_k$ and $\tilde{v}_k,$ and using \eqref{eq20} and \eqref{eq21}, we obtain the expressions
\begin{align}
d_k(A)\beta_k+d_j(A)\gamma_k&=b Dd_j(A)(B) \delta_{kj}-\langle \tilde{v}_k,Bu_j(A)\rangle,\label{eq23}\\
d_j(A)\beta_k+d_k(A)\gamma_k&=-b  Dd_j(A)(B) \delta_{kj}-\langle \tilde{u}_k,Bv_j(A)\rangle,\label{eq24}\\
-d_j(A)\alpha_k+d_k(A)\delta_k&=aDd_j(A)(B)\delta_{kj}-\langle \tilde{v}_k,Bv_j(A)\rangle.\label{eq25}
\end{align}
Thus for each $k=1,\ldots,n$ we have a system of four linear equations in four unknowns $\alpha_k,\beta_k,\gamma_k$ and $\delta_k.$
When $k\ne j,$  this system is
\begin{align*}
	\begin{bmatrix}
		d_{k}(A)&0&0&-d_{j}(A) \\
		0&d_{k}(A)&d_{j}(A)&0 \\
		0&d_{j}(A)&d_{k}(A)&0 \\
		-d_{j}(A)&0&0&d_{k}(A)
	\end{bmatrix}
	\begin{bmatrix}
		\alpha_{k} \\ \beta_{k} \\ \gamma_{k} \\ \delta_{k}
	\end{bmatrix}
	= -
	\begin{bmatrix}
		\langle \tilde{u}_k,Bu_{j}(A) \rangle \\ \langle \tilde{v}_k,Bu_{j}(A) \rangle \\ \langle \tilde{u}_k,Bv_{j}(A) \rangle \\ \langle \tilde{v}_k,Bv_{j}(A) \rangle
	\end{bmatrix}
\end{align*}
Here $d_{j}(A) \neq d_{k}(A)$ therefore the coefficient matrix above is invertible and left multiplying by the inverse we get
\begin{align*}
	\begin{bmatrix}
		\alpha_{k} \\ \beta_{k} \\ \gamma_{k} \\ \delta_{k}
	\end{bmatrix}
	=
	(d_{j}^2(A)-d_{k}^2(A))^{-1}
	\begin{bmatrix}
		d_{k}(A)&0&0&d_{j}(A) \\
		0&d_{k}(A)&-d_{j}(A)&0 \\
		0&-d_{j}(A)&d_{k}(A)&0 \\
		d_{j}(A)&0&0&d_{k}(A)
	\end{bmatrix}	
	\begin{bmatrix}
		\langle \tilde{u}_k,Bu_{j}(A) \rangle \\ \langle \tilde{v}_k,Bu_{j}(A) \rangle \\ \langle \tilde{u}_k,Bv_{j}(A) \rangle \\ \langle \tilde{v}_k,Bv_{j}(A) \rangle
	\end{bmatrix}
\end{align*}
The solution is thus given by the following equations
\begin{equation}
	\alpha_{k}=\frac{1}{d_{j}^2(A)-d_{k}^2(A)}\left( d_{k}(A)\langle \tilde{u}_k,Bu_{j}(A) \rangle +d_{j}(A) \langle \tilde{v}_k,Bv_{j}(A) \rangle \right),\label{eq26}
\end{equation}
\begin{equation}
	\beta_{k}=\frac{1}{d_{j}^2(A)-d_{k}^2(A)}\left( d_{k}(A)\langle \tilde{v}_k,Bu_{j}(A) \rangle -d_{j}(A) \langle \tilde{u}_k,Bv_{j}(A) \rangle \right),\label{eq27}
\end{equation}
\begin{equation}
	\gamma_{k}=\frac{1}{d_{j}^2(A)-d_{k}^2(A)}\left( d_{k}(A)\langle \tilde{u}_k,Bv_{j}(A) \rangle -d_{j}(A) \langle \tilde{v}_k,Bu_{j}(A) \rangle \right),\label{eq28}
\end{equation}
\begin{equation}
	\delta_{k}=\frac{1}{d_{j}^2(A)-d_{k}^2(A)}\left( d_{k}(A)\langle \tilde{v}_k,Bv_{j}(A) \rangle +d_{j}(A) \langle \tilde{u}_k,Bu_{j}(A) \rangle \right).\label{eq29}
\end{equation}
Now, for $k=j$ we have the following system
\begin{align*}
	\begin{bmatrix}
		d_{j}(A)&0&0&-d_{j}(A) \\
		0&d_{j}(A)&d_{j}(A)&0 \\
		0&d_{j}(A)&d_{j}(A)&0 \\
		-d_{j}(A)&0&0&d_{j}(A)
	\end{bmatrix}
	\begin{bmatrix}
		\alpha_{j} \\ \beta_{j} \\ \gamma_{j} \\ \delta_{j}
	\end{bmatrix}
	= -
	\begin{bmatrix}
		\langle \tilde{u}_j,Bu_{j}(A) \rangle -aDd_j(A)(B)\\ \langle \tilde{v}_j,Bu_{j}(A) \rangle -b Dd_j(A)(B) \\ \langle \tilde{u}_j,Bv_{j}(A) +b Dd_j(A)(B) \rangle \\ \langle \tilde{v}_j,Bv_{j}(A) \rangle-aDd_j(A)(B)
	\end{bmatrix}
\end{align*}
Using the expression for $Dd_j(A)(B),$ the fact that $B$ is symmetric and the relationship between $(\tilde{u}_j, \tilde{v}_j)$ and $(u_j(A), v_j(A))$ one can see that the solution to the above system exists and is given by 
\begin{align}
	\alpha_{j}-\delta_{j} &=\frac{1}{2d_{j}(A)}\left(\langle \tilde{v}_{j}(A),Bv_{j}(A) \rangle -\langle \tilde{u}_{j}(A),Bu_{j}(A) \rangle\right) \label{1rel13}\\
	\beta_{j}+\gamma_{j} &=\frac{-1}{2d_{j}(A)}(\langle \tilde{v}_{j}(A),Bu_{j}(A) \rangle)+\frac{-1}{2d_{j}(A)}(\langle \tilde{u}_{j}(A),Bv_{j}(A) \rangle)\label{1rel14}
\end{align}
Differentiating \eqref{eq13} and \eqref{eq16}, respectively, gives
$$\langle Du_j(A)(B),Jv_j(A)\rangle+\langle u_j(A),JDv_j(A)(B)\rangle=0$$
and $$\langle u_j(A),JDu_j(A)(B)\rangle+\langle v_j(A),JDv_j(A)(B)\rangle=0.$$
These in turn imply $\alpha_j+\delta_j=0$ and $\beta_j-\gamma_j=0.$
Thus
\begin{equation}
\alpha_{j}=-\delta_{j}=\frac{1}{4d_{j}(A)}\left(\langle \tilde{v}_{j}(A),Bv_{j}(A) \rangle -\langle \tilde{u}_{j}(A),Bu_{j}(A) \rangle\right) \label{eq30}
\end{equation}
 and
\begin{equation}
\beta_{j}=\gamma_{j}=\frac{-1}{4d_{j}(A)}(\langle \tilde{v}_{j}(A),Bu_{j}(A) \rangle)+\frac{-1}{4d_{j}(A)}(\langle \tilde{u}_{j}(A),Bv_{j}(A) \rangle).\label{eq31}
\end{equation}
Simplifying the above expressions we get for $k \neq j,$
\begin{eqnarray*}
\alpha_k &=& \frac{1}{d_j^2(A)-d_k^2(A)}\left(d_k^2(A)\langle J\tilde{v}_k,A^{-1}Bu_j(A)\rangle+d_j(A)\langle J\tilde{v}_k,JBv_j(A)\rangle\right).
\end{eqnarray*}
\begin{eqnarray*}
\beta_k &= & -\frac{1}{d_j^2(A)-d_k^2(A)}\left(d_k^2(A)\langle J\tilde{u}_k,A^{-1}Bu_j(A)\rangle+d_j(A)\langle J\tilde{u}_k,JBv_j(A)\rangle\right)
\end{eqnarray*}

\begin{eqnarray*}
\alpha_j &=&-\frac{1}{4}\langle J\tilde{v}_j,A^{-1}Bu_j(A)\rangle+\frac{1}{4d_j(A)}\langle J\tilde{v}_j,JBv_j(A)\rangle
\end{eqnarray*}

\begin{eqnarray*}
\beta_j &= &\frac{1}{4}\langle J\tilde{u}_j,A^{-1}Bu_j(A)\rangle-\frac{1}{4d_j(A)}\langle J\tilde{u}_j,JBv_j(A)\rangle
\end{eqnarray*}

Let $x$ be the $2n$ real vector with components $\alpha_1,\ldots,\alpha_n,\beta_1,\ldots,\beta_n.$
Then we see that $x$ can be written as
$$x=\hat{D}\tilde{D}M^{-1}A^{-1}Bu_j(A)+\overline{D}M^{-1}JBv_j(A),$$
where $\tilde{D}$ is the $2n\times 2n$ diagonal matrix with diagonal entries the symplectic eigenvalues of $A,$
$d_1(A),\ldots,d_n(A),$
$d_1(A),\ldots,d_n(A),$
and $\hat{D}$ and $\overline{D}$ are the diagonal matrices given by \eqref{eq9} and \eqref{eq10}, respectively.
Therefore
\begin{eqnarray*}
Du_j(A)(B)&=&M\hat{D}\tilde{D}M^{-1}A^{-1}Bu_j(A)+M\overline{D}M^{-1}JBv_j(A)\\
&=& M\hat{D}M^TBu_j(A)+M\overline{D}JM^TBv_j(A).
\end{eqnarray*}
The last equality follows from the fact that $M^TAM=\tilde{D}$ and $M^TJM=J.$
This proves \eqref{eq7}.
Similar computations give \eqref{eq8}.
\end{proof}

\begin{rem}\label{remflr}
Let $A\in\mathbb{P}(2n),$
and let $d,u,v$ be maps on a neighbourhood $U$ of $A$ such that $d(P)$ is a symplectic eigenvalue of $P$ and $(u(P),v(P))$ is a pair of normalised symplectic eigenvector.
If $d,u,v$ are differentiable at $A,$
then by following the same steps as those used to prove \eqref{eq6},
we can compute the derivative of $d$ at $A$ as
\begin{equation}
Dd(A)(B)=\frac{1}{2}\left(\langle u(A),Bu(A)\rangle+\langle v(A),Bv(A)\rangle\right).\label{eqflr6}
\end{equation}
\end{rem}

Given a map $t\mapsto A(t)$ from an open interval $I$ to $\mathbb{P}(2n),$
we denote the symplectic eigenvalue $d_j(A(t))$ by $d_j(t),$ $1\le j\le n.$

\begin{cor}\label{cor7}
Let $t\mapsto A(t)$ be a map from  an open interval $I$ to $\mathbb{P}(2n)$ that is infinitely differentiable at $t_0\in I.$
Suppose that $d_j(t_0)$ is simple.
Then there exists an open interval $I_0$ containing $t_0$ such that the map $d_j$ is infinitely differentiable on $I_0.$
If $(u_0,v_0)$ is a corresponding normalised symplectic eigenvector pair of $A(t_0),$
then we can find an infinitely differentiable normalised symplectic eigenvector pair map $(u_j,v_j)$ on $I_0$
corresponding to $d_j(t)$ such that
$(u_j(t_0),v_j(t_0))=(u_0,v_0),$
and $((u_j(t),v_j(t))$ satisfies
$$\langle u_0,Ju_j(t)\rangle+\langle v_0,Jv_j(t)\rangle=0$$
for all $t\in I_0.$
Further, for any symplectic matrix $M$ given by the Williamson theorem for $A(t_0),$
\begin{equation}
d_j^\prime(t)=\frac{\langle u_j(t),A^\prime(t)u_j(t)\rangle+\langle u_j(t),A^\prime(t)u_j(t)\rangle}{2}\ \textrm{ for all }t\in J,\label{eqcor71}
\end{equation}
\begin{equation}
u_j^\prime(t_0)=M\hat{D}M^TA^\prime(t_0)u_0+M\overline{D}JM^TA^\prime(t_0)v_0,\label{eqcor72}
\end{equation}
and
\begin{equation}
v_j^\prime(t_0)=M\hat{D}M^TA^\prime(0)v_0-M\overline{D}JM^TA^\prime(t_0)u_0,\label{eqcor73}
\end{equation}
where $\hat{D}$ and $\overline{D}$ are the diagonal matrices associated with $A(t_0)$ given by \eqref{eq9} and \eqref{eq10}, respectively.
\end{cor}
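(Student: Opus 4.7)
The plan is to treat Corollary \ref{cor7} as a straightforward consequence of Theorem \ref{thm_main1} and Theorem \ref{thm1b} obtained by composing the matrix-variable maps with $t\mapsto A(t)$ and applying the chain rule. First I would invoke Theorem \ref{thm_main1} at the point $A(t_0)\in\mathbb{P}(2n)$: since $d_j(t_0)=d_j(A(t_0))$ is simple, there is an open neighbourhood $U$ of $A(t_0)$ in $\mathbb{P}(2n)$ and $C^\infty$ maps $\widetilde{d}_j\colon U\to\mathbb{R}$ and $\widetilde{u}_j,\widetilde{v}_j\colon U\to\mathbb{R}^{2n}$ that furnish a simple symplectic eigenvalue of $P$ with a corresponding normalised symplectic eigenvector pair satisfying $\widetilde{d}_j(A(t_0))=d_j(t_0)$, $\widetilde{u}_j(A(t_0))=u_0$, $\widetilde{v}_j(A(t_0))=v_0$, and the constraint \eqref{eq1c}.

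Because $t\mapsto A(t)$ is $C^\infty$ at $t_0$ it is in particular continuous there, so there exists an open interval $I_0\subset I$ containing $t_0$ with $A(t)\in U$ for every $t\in I_0$. I would then define the sought maps on $I_0$ as the compositions
\begin{equation*}
d_j(t):=\widetilde{d}_j(A(t)),\quad u_j(t):=\widetilde{u}_j(A(t)),\quad v_j(t):=\widetilde{v}_j(A(t)).
\end{equation*}
Smoothness is immediate from the chain rule applied to two $C^\infty$ maps. The required initial conditions $d_j(t_0)=d_j(A(t_0))$, $u_j(t_0)=u_0$, $v_j(t_0)=v_0$ follow from part (ii) of Theorem \ref{thm2}, and the identity $\langle u_0,Ju_j(t)\rangle+\langle v_0,Jv_j(t)\rangle=0$ on $I_0$ follows by specialising \eqref{eq1c} at $P=A(t)$.

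For the derivative formulae I would differentiate the compositions at $t_0$ via the chain rule, using that $A'(t_0)$ is itself a $2n\times 2n$ real symmetric matrix (since each $A(t)$ is symmetric). Thus $d_j'(t_0)=Dd_j(A(t_0))\bigl(A'(t_0)\bigr)$, and analogously for $u_j'(t_0)$ and $v_j'(t_0)$. Substituting $B=A'(t_0)$ into \eqref{eq6}, \eqref{eq7}, \eqref{eq8} yields \eqref{eqcor71} at $t=t_0$, together with \eqref{eqcor72} and \eqref{eqcor73}. Finally, to justify that \eqref{eqcor71} holds for every $t\in I_0$ (not only at $t_0$) I would apply the same reasoning at an arbitrary $t\in I_0$, observing that $(u_j(t),v_j(t))$ is a normalised symplectic eigenvector pair of $A(t)$ for the simple symplectic eigenvalue $d_j(t)$, so Remark \ref{remflr} (equivalently, \eqref{eq6}) applies verbatim with $A(t)$ in place of $A$.

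There is no real obstacle here: all the analytical work has been done in Theorems \ref{thm_main1} and \ref{thm1b}, and the corollary is in essence a chain-rule translation from the matrix variable $P$ to the scalar parameter $t$. The only mild subtlety worth flagging is that \eqref{eqcor72}, \eqref{eqcor73} use the matrices $\hat D,\overline D$ and a Williamson symplectic $M$ associated with $A(t_0)$, so these equalities are stated at the single point $t_0$ (as in Theorem \ref{thm1b}), whereas \eqref{eqcor71} is pointwise on all of $I_0$ since its right-hand side involves only data of $A(t)$ and its eigenvector pair at $t$.
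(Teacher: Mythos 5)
Your proposal is correct and is essentially the argument the paper intends: the corollary is stated without proof precisely because it follows by composing the $C^\infty$ maps of Theorem \ref{thm_main1} with $t\mapsto A(t)$ and applying the chain rule with $B=A^\prime(t)$ (symmetric) in \eqref{eq6}--\eqref{eq8}, plus Remark \ref{remflr} for \eqref{eqcor71} at points $t\ne t_0$. Your observation that \eqref{eqcor72}--\eqref{eqcor73} are pinned to $t_0$ through the Williamson data of $A(t_0)$, while \eqref{eqcor71} is pointwise on $I_0$, is exactly the right reading of the statement.
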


\begin{thm}
Following the notations of Corollary \ref{cor7}, the second derivative
of $d_j$ at $t_0$ is given by
\begin{align}
d_j^{\prime\prime}(t_0) =& \frac{1}{2}\left(\langle u_0,A^{\prime\prime}(t_0)u_0\rangle+\langle v_0,A^{\prime\prime}(t_0)v_0 \rangle \right) \nonumber \\
&+ 2  \langle A^{\prime}(t_0)u_0, M \overline{D}JM^TA^{\prime}(t_0)v_0\rangle \nonumber \\
& +\langle A^\prime(t_0)u_0,M\hat{D}M^TA^\prime(t_0)u_0\rangle+\langle A^\prime(t_0)v_0, M\hat{D}M^TA^\prime(t_0)v_0\rangle,\label{eqthm81}
\end{align}
where $\hat{D}$ and $\overline{D}$ are the diagonal matrices associated with $A(t_0)$ given by \eqref{eq9} and \eqref{eq10}, respectively.
\end{thm}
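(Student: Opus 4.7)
The plan is to obtain $d_j''(t_0)$ by differentiating the first-derivative formula \eqref{eqcor71} of Corollary \ref{cor7} once more, and then substituting the explicit expressions \eqref{eqcor72} and \eqref{eqcor73} for $u_j'(t_0)$ and $v_j'(t_0)$. Since the maps $A$, $u_j$, $v_j$ and $d_j$ are all $C^\infty$ on $I_0$, a second application of the product rule is legitimate.

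First I would apply the product rule to
\begin{equation*}
d_j'(t) = \tfrac{1}{2}\bigl(\langle u_j(t), A'(t) u_j(t)\rangle + \langle v_j(t), A'(t) v_j(t)\rangle\bigr),
\end{equation*}
using the symmetry of $A'(t)$ to combine the two terms produced by differentiating each slot of $\langle u_j(t), A'(t) u_j(t)\rangle$, and similarly for $v_j$. Evaluating at $t_0$ yields
\begin{equation*}
d_j''(t_0) = \tfrac{1}{2}\bigl(\langle u_0, A''(t_0) u_0\rangle + \langle v_0, A''(t_0) v_0\rangle\bigr) + \langle u_j'(t_0), A'(t_0) u_0\rangle + \langle v_j'(t_0), A'(t_0) v_0\rangle,
\end{equation*}
in which the first bracket already matches the $A''(t_0)$ part of \eqref{eqthm81}. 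Substituting \eqref{eqcor72} and \eqref{eqcor73} and writing $p = A'(t_0) u_0$, $q = A'(t_0) v_0$, the ``diagonal'' contributions are $\langle M\hat{D}M^T p, p\rangle + \langle M\hat{D}M^T q, q\rangle$, which are exactly the two $\hat{D}$ summands of \eqref{eqthm81}; the remaining cross terms add up to $\langle M\overline{D}JM^T q, p\rangle - \langle M\overline{D}JM^T p, q\rangle$.

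The only step that is more than bookkeeping is collapsing these two cross terms into $2\langle p, M\overline{D}JM^T q\rangle$, the middle summand of \eqref{eqthm81}. The key observation is that $M\overline{D}JM^T$ is antisymmetric: by \eqref{eq10}, $\overline{D}$ has the block form $\diag(D', D')$ for some $n\times n$ diagonal matrix $D'$, so $\overline{D}$ commutes with $J$; combining this with $J^T = -J$ and $\overline{D}^T = \overline{D}$ gives $(M\overline{D}JM^T)^T = M J^T \overline{D} M^T = -M\overline{D}JM^T$. Consequently $\langle M\overline{D}JM^T p, q\rangle = -\langle M\overline{D}JM^T q, p\rangle$, which is exactly what is needed to fold the two cross terms into the single middle term of \eqref{eqthm81}. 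Once this antisymmetry is noted, no further obstacle remains.
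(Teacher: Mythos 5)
Your proposal is correct and follows essentially the same route as the paper: differentiate \eqref{eqcor71} using the symmetry of $A'(t_0)$, substitute \eqref{eqcor72}--\eqref{eqcor73}, and merge the two cross terms via the identity $\overline{D}J=J\overline{D}$, which is exactly your antisymmetry observation for $M\overline{D}JM^T$ stated in a slightly different form. No gaps.
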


\begin{proof}
By \eqref{eqcor71}, we have
\begin{equation}
d_j^\prime(t)=\frac{\langle u_j(t),A^\prime(t)u_j(t)\rangle+\langle v_j(t),A^\prime(t)v_j(t)\rangle}{2}\label{eqpf81}
\end{equation}
for every $t$ in $ I_0.$
Differentiating \eqref{eqpf81} at $t=t_0$ and using the fact that $A^\prime(t_0)$ is real symmetric,  we get
\begin{eqnarray}
d_j^{\prime\prime}(t_0)&=& \frac{1}{2}\left(\langle u_0,A^{\prime\prime}(t_0)u_0\rangle+\langle v_0,A^{\prime\prime}(t_0)u_0\rangle\right)\nonumber\\
& &\ +\langle u_j^\prime(t_0),A^\prime(t_0)u_0\rangle+\langle v_j^\prime(t_0),A^\prime(t_0)v_0\rangle.\label{eqpf81m}
\end{eqnarray}
Using the expression \eqref{eqcor72} for the derivative $u_j^\prime(t_0),$ we get
\begin{align}
\langle u_j^\prime(t_0),A^\prime(t_0)u_0\rangle =& \langle M\hat{D}M^TA^\prime(t_0)u_0,A^\prime(t_0)u_0\rangle \nonumber \\
&+\langle M\overline{D}JM^TA^\prime(t_0)v_0,A^\prime(t_0)u_0\rangle.\label{eqpf82}
\end{align}
Similarly using \eqref{eqcor73}, we have
\begin{align*}
\langle v_j^\prime(t_0),A^\prime(t_0)v_0\rangle =& \langle M\hat{D}M^TA^\prime(t_0)v_0,A^\prime(t_0)v_0\rangle \nonumber \\
&-\langle M\overline{D}JM^TA^\prime(t_0)u_0,A^\prime(t_0)v_0\rangle
\end{align*}
Since $\overline{D}J=J\overline{D},$ we have
\begin{align}
\langle v_j^\prime(t_0),A^\prime(t_0)v_0\rangle= & \langle M\hat{D}M^TA^\prime(t_0)v_0,A^\prime(t_0)v_0\rangle \nonumber \\
&+ \langle M\overline{D}JM^TA^\prime(t_0)v_0,A^\prime(t_0)u_0\rangle.\label{eqpf83}
\end{align}
Using \eqref{eqpf82} and \eqref{eqpf83} in \eqref{eqpf81m}, we obtain \eqref{eqthm81}.
\end{proof}

\section{Symplectic eigenvalues of curves of positive definite matrices}
\sloppy
In this section we study the differentiability and analyticity of symplectic eigenvalues of positive definite matrices dependent on a real parameter
irrespective of their multiplicities.
The matrix square root is an infinitely differentiable map,
and the symplectic eigenvalues of $A$ are the positive eigenvalues of the Hermitian matrix $\imath A^{1/2}JA^{1/2}.$
So, we obtain the results on the differentiability of symplectic eigenvalues
by using the corresponding results on eigenvalues of Hermitian matrices.
We similarly derive the results on analyticity of symplectic eigenvalues.
For details on the differentiability and analyticity of eigenvalues and eigenvectors of curves of Hermitian matrices, see \cite{kato, rellich}.

Let $t\mapsto H(t)$ be a map from an open interval $I$ to the space $\mathbb{H}(k)$ of all $k \times k$ Hermitian matrices
that is differentiable at $t_0\in I.$
Then all the eigenvalues of $H(t)$ can be chosen to be differentiable at $t_0.$
It means that there exist $k$ functions $\lambda_1, \ldots, \lambda_k$ in a neighbourhood of $t_0$ that are differentiable at $t_0$ and $\lambda_1(t), \ldots, \lambda_k(t)$ are the $k$ eigenvalues of $H(t)$ counted with multiplicity.
Further if the map $t\mapsto H(t)$ is $C^1$ on $I,$
then we can choose the eigenvalues to be $C^1$ on the whole of $I.$
See  (\cite{kato}, pp.113-115).
Now since the square root map $A\mapsto A^{1/2}$ is $C^\infty$ on $\mathbb{P}(2n),$ we can directly obtain the following symplectic analogue.

\begin{thm}\label{thm3.1.d}
Let $t\mapsto A(t)$ be a map from an open interval $I$ to $\mathbb{P}(2n)$
that is differentiable at $t_0\in I.$
Then all the symplectic eigenvalues of $A(t)$ can be chosen to be differentiable at $t_0,$
i.e., we can find $n$ functions $\tilde{d}_1,\ldots,\tilde{d}_n$ in a neighbourhood of $t_0$ that are differentiable
at $t_0$ such that $\tilde{d}_1(t),\ldots,\tilde{d}_n(t)$ are the symplectic eigenvalues of $A(t).$
If, in addition, the map $t\mapsto A(t)$ is $C^1$ on $I,$
then $\tilde{d}_1,\ldots,\tilde{d}_n$ can be chosen to be $C^1$ on $I.$
\end{thm}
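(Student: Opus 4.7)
The plan is to reduce the claim to the corresponding Hermitian eigenvalue selection result quoted just above the theorem, via the identification between symplectic eigenvalues of $A$ and positive eigenvalues of $\imath A^{1/2}JA^{1/2}$ furnished by Lemma \ref{lem4}.

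First I would set $H(t)=\imath A(t)^{1/2}JA(t)^{1/2}$, check that $H(t)^{*}=H(t)$, and use the fact that $B\mapsto B^{1/2}$ is $C^{\infty}$ on $\mathbb{P}(2n)$ together with the chain rule and smoothness of matrix multiplication to transfer both regularity hypotheses on $A$ directly to $H$: if $A$ is differentiable at $t_{0}$ then so is $H$, and if $A$ is $C^{1}$ on $I$ then so is $H$. Applying the Hermitian selection result to $H$ then yields $2n$ real-valued functions $\mu_{1},\ldots,\mu_{2n}$ enumerating (with multiplicity) the eigenvalues of $H(t)$ and inheriting the regularity of $H$.

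By Lemma \ref{lem4} the spectrum of $H(t)$ is exactly $\{\pm d_{j}(A(t)):1\le j\le n\}$, and since $A(t)\in\mathbb{P}(2n)$ every $d_{j}(A(t))>0$, so no $\mu_{k}$ vanishes on its domain. Being continuous and nowhere zero on a connected set, each $\mu_{k}$ has constant sign. The symmetry $\overline{H(t)}=-H(t)$ (obtained by complex conjugation since $A(t)^{1/2}$ and $J$ are real) shows that the spectrum of $H(t)$ is invariant under negation with multiplicities, so exactly $n$ of the $\mu_{k}$ are positive; relabelling these as $\tilde d_{1},\ldots,\tilde d_{n}$ gives the desired enumeration of the symplectic eigenvalues of $A(t)$ with the claimed regularity.

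The only point requiring care, and the main obstacle worth flagging, is the sign-selection step in the pointwise-differentiable case: differentiability of $\mu_{k}$ at $t_{0}$ only forces continuity of $\mu_{k}$ at $t_{0}$, not on a full neighbourhood, so I would have to shrink the domain to a small interval around $t_{0}$ on which each $\mu_{k}$ retains the sign of $\mu_{k}(t_{0})$. In the $C^{1}$-on-$I$ case this step is automatic from the connectedness of $I$. Beyond this minor bookkeeping, no further work is needed, because the substantive content is packaged into Lemma \ref{lem4}, the $C^{\infty}$ smoothness of the square root on $\mathbb{P}(2n)$, and the Hermitian selection theorem cited from Kato.
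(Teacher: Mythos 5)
Your proposal is correct and follows essentially the same route as the paper, which obtains the theorem directly from the Hermitian selection result applied to $H(t)=\imath A(t)^{1/2}JA(t)^{1/2}$ together with the $C^\infty$ smoothness of the square root map. The sign-selection bookkeeping you flag (shrinking to an interval where each eigenvalue function keeps the sign it has at $t_0$, using continuity at $t_0$ and invertibility of $H(t)$) is exactly the detail the paper leaves implicit, and your handling of it is sound.
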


The continuity of symplectic eigenvector pairs cannot be guaranteed
even if the map $t\mapsto A(t)$ is $C^\infty$ on $I.$
This we show by the following example.
\begin{example}
For each $t\in (-1,1)$ define the $4\times 4$ positive definite matrix $A(t)$ as
$$A(t)=I_2 \otimes P(t)$$

where
$$P(t)=\left[\begin{matrix}
                     \scriptstyle 1- e^{-1/t^2} \cos(2/t) &  \scriptstyle  -e^{-1/t^2} \sin(2/t) \\
                    \scriptstyle - e^{-1/t^2} \sin(2/t) & \scriptstyle 1+e^{-1/t^2} \cos(2/t))\\
                  \end{matrix}\right]$$
									for $t \neq 0$ and $P(0)=I_2.$
									Clearly $t\mapsto A(t)$ is a smooth map.                  
For $t \neq 0,$   $d_1(t)= 1-e^{-1/t^2} $ and $d_2(t)= 1+e^{-1/t^2},$
and $d_1(0)=d_2(0)=1.$
 Let
$ u_1(t)=e_1 \otimes \left[\begin{matrix}
                     \scriptstyle \cos(1/t)  &   \scriptstyle  \sin(1/t) 
                  \end{matrix}\right]^T,   
   v_1(t)=e_2 \otimes \left[\begin{matrix}
                  \scriptstyle  \cos(1/t)  &    \scriptstyle \sin(1/t) 
                  \end{matrix}\right]^T$
                  and
$ u_2(t)=e_1 \otimes \left[\begin{matrix}
                     \scriptstyle \sin(1/t)  &   \scriptstyle  -\cos(1/t) 
                  \end{matrix}\right]^T,   
   v_2(t)=e_2 \otimes \left[\begin{matrix}
                     \scriptstyle \sin(1/t)  &   \scriptstyle  -\cos(1/t) 
                  \end{matrix}\right]^T, $             
  where $e_1, e_2$ are the standard unit vectors in $\mathbb{R}^2.$
  
\sloppy
	One can see that $(u_1(t), v_1(t)) (resp. \ (u_2(t), v_2(t)))$ is a normalised symplectic eigenvector pair
corresponding to $d_1(t) (resp. \ d_2(t)).$ 
Suppose that there exist functions $\tilde{u}, \tilde{v}: (-1,1) \to \mathbb{R}^4,$ continuous at $0$ such that $\left(\tilde{u}(t), \tilde{v}(t) \right)$  forms a normalised symplectic eigenvector pair of $A(t).$
For each $t \neq 0$ the pair $\left(\tilde{u}(t), \tilde{v}(t) \right)$ either corresponds to $d_1(t),$ or to $d_2(t).$
Therefore we can get a sequence $(t_j)_{j \in \mathbb{N}}$ of nonzero terms in $(-1,1)$ converging to $0$
such that for all $j\in\mathbb{N}$ $\left(\tilde{u}(t_j), \tilde{v}(t_j) \right)$ corresponds either to $d_1(t_j)$ or to $d_2(t_j).$
Consider the case when $\left(\tilde{u}(t_j),\tilde{v}(t_j)\right)$  corresponds to $d_1(t_j)$ for all $j.$ 
 For each $j,$ $d_1(t_j)$ is a simple symplectic eigenvalue of $A(t_j).$ 
 This implies that the normalised symplectic eigenvector pair $(\tilde{u}(t_j), \tilde{v}(t_j))$ is of the form
	$	\tilde{u}(t_j) = a_ju_1(t_j)- b_jv_1(t_j), 		\tilde{v}(t_j) = b_ju_1(t_j)+a_jv_1(t_j)$ 
 where $a_j, \ b_j \in \mathbb{R}$ and $a_j^2+b_j^2=1.$ 
 The continuity of $\tilde{u}$ and $\tilde{v}$ at $t=0$ implies that
the limits $ \lim\limits_{j \to \infty} a_j \sin (1/t_j)$ and $\lim\limits_{j \to \infty} b_j \sin (1/t_j)$ exist,
which in turn imply that $\lim\limits_{j \to \infty}  \sin^2 (1/t_j)$ exists.
This is a contradiction. We get a similar contradiction in the other case.
Therefore we conclude that there does not exist any continuous selection of normalised symplectic eigenvector pairs.
\end{example}

However, the symplectic eigenvalues and the symplectic eigenvector pairs can be chosen smoothly under an additional condition as shown in the following theorem.
The proof follows from the smoothness of the map $A\mapsto A^{1/2}$ on $\mathbb{P}(2n),$ and (\cite{klm}, Theorem 7.6).
We say that two functions $f$ and $g$ continuous at $t_0$ meet with infinite order if for every $p\in\mathbb{N}$
there exists a function $h_p$ continuous at $t_0$ such that
$f(t)-g(t)=t^ph_p(t).$ See (3.5) in \cite{klm}.

\begin{thm}
Let $t\mapsto A(t)$ be a smooth map from an open interval $I$ to $\mathbb{P}(2n)$
such that for all $1\le i\ne j\le n$ either $d_i(t)=d_j(t)$ for all $t\in I$
or $d_i(t)$ and $d_j(t)$ do not meet with infinite order at any point in $I.$
Then all the symplectic eigenvalues and corresponding symplectic eigenbasis can be chosen smoothly in $t$ on $I.$
\end{thm}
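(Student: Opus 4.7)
The plan is to reduce the smooth-selection problem for symplectic eigenvalues to the known smooth-selection result for smooth curves of Hermitian matrices. By Lemma \ref{lem4}, the symplectic eigenvalues of $A(t)$ are exactly the positive eigenvalues of the Hermitian matrix $H(t) := \imath A(t)^{1/2} J A(t)^{1/2}$. Since $A \mapsto A^{1/2}$ is smooth on $\mathbb{P}(2n)$, the composition $t \mapsto H(t)$ is a smooth curve of $2n \times 2n$ Hermitian matrices, to which I would apply Theorem 7.6 of \cite{klm}.

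The main step is to verify that the eigenvalue branches of $H(t)$ satisfy the hypothesis of that theorem, namely that any two of them either agree on all of $I$ or do not meet with infinite order at any $t \in I$. The branches of $H(t)$ are $\pm d_1(t),\ldots,\pm d_n(t)$. Pairs of like-sign branches $(d_i,d_j)$ with $i \ne j$ (and, by symmetry, $(-d_i,-d_j)$) inherit the property directly from the assumption on $A(t)$. For a mixed pair $(d_i,-d_j)$ the difference is $d_i(t)+d_j(t)>0$ at every $t$, because $A(t) \in \mathbb{P}(2n)$ forces each $d_k(t)>0$; hence these branches never coincide and so trivially fail to meet with infinite order at any point. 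This hypothesis transfer is the one place where the argument is not purely mechanical, and it is the main obstacle I expect; it is resolved by the positivity of $A(t)$.

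Theorem 7.6 of \cite{klm} then produces smooth functions $\lambda_1,\ldots,\lambda_{2n}\colon I \to \mathbb{R}$ and a smooth orthonormal basis $x_1(t),\ldots,x_{2n}(t)$ of eigenvectors of $H(t)$. Because $0$ is never an eigenvalue of $H(t)$ and $I$ is connected, each $\lambda_k$ has constant sign on $I$. The $n$ positive branches give the required smooth symplectic eigenvalues $\tilde{d}_1,\ldots,\tilde{d}_n$ of $A(t)$, and we label the corresponding smooth eigenvectors $y_1(t),\ldots,y_n(t)$.

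To extract a smooth symplectic eigenbasis, I would set $\tilde{u}_k(t) - \imath \tilde{v}_k(t) := A(t)^{-1/2} y_k(t)$; this is smooth because $A \mapsto A^{-1/2}$ is smooth on $\mathbb{P}(2n)$, and its real and imaginary parts provide smooth real vectors. Lemma \ref{lem4} guarantees that $(\tilde{u}_k(t),\tilde{v}_k(t))$ is a symplectic eigenvector pair of $A(t)$ for $\tilde{d}_k(t)$, and Proposition \ref{prop1sev} translates the orthonormality of the $y_k(t)$ into symplectic orthogonality of these pairs. A final rescaling of each pair by $\sqrt{2\tilde{d}_k(t)}$, which is smooth and positive, yields the normalisation $\langle u_k,Jv_k\rangle=1$ via identity \eqref{eq1sev}, completing the construction of a smooth symplectic eigenbasis on all of $I$.
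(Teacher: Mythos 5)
Your proposal is correct and follows essentially the same route as the paper, which proves this theorem precisely by composing the smooth square root map with the Hermitian matrix $\imath A(t)^{1/2}JA(t)^{1/2}$ and invoking Theorem 7.6 of \cite{klm}. The details you supply beyond the paper's one-line sketch---checking that the $\pm d_i$ branches satisfy the infinite-order hypothesis (mixed-sign pairs never meeting by positivity) and converting the orthonormal eigenvectors back through $A(t)^{-1/2}$ with the normalisation from \eqref{eq1sev}---are exactly the steps the paper leaves implicit.
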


We now turn to the case when $A(t)$ is a real analytic curve.

\sloppy
\begin{thm}\label{thm_main2}
Let $t\mapsto A(t)$ be a map from an open interval $I$ to $\mathbb{P}(2n)$
that is real analytic at $t_0\in I.$
\begin{itemize}
\item[(i)] If $d$ is a symplectic eigenvalue of $A(t_0)$ with multiplicity $m,$
then for some $\epsilon>0,$ there exist $m$ symplectic eigenvalue maps
$\tilde{d}_1,\ldots,\tilde{d}_m:(t_0-\epsilon,t_0+\epsilon)\to\mathbb{R},$ and $m$
corresponding symplectically orthonormal symplectic eigenvector pair maps
$(\tilde{u}_1,\tilde{v}_1),\ldots,(\tilde{u}_m,\tilde{v}_m):(t_0-\epsilon,t_0+\epsilon)\to\mathbb{R}^{2n}\times \mathbb{R}^{2n}$
that are real analytic at $t_0$ with
each $\tilde{d}_j(t_0)=d.$
\item[(ii)] There exists an $\epsilon>0$ such that all the $n$ symplectic eigenvalues of $A(t)$ and a corresponding symplectic eigenbasis
can be chosen on $(t_0-\epsilon,t_0+\epsilon)$ to be real analytic at $t_0.$
\end{itemize}
\end{thm}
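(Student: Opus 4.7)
The plan is to reduce to the analytic perturbation theory of Hermitian matrices applied to the curve $H(t) := \imath A(t)^{1/2} J A(t)^{1/2}$, and then translate the conclusion back to symplectic data via Lemma \ref{lem4} and Proposition \ref{prop1sev}. Since $A(t_0)$ is positive definite and $t\mapsto A(t)$ is real analytic at $t_0$, the Riesz functional calculus shows that $t\mapsto A(t)^{1/2}$ is real analytic at $t_0$, and hence so are $A(t)^{-1/2}$ and $H(t)$. By Lemma \ref{lem4}, the spectrum of $H(t)$ consists exactly of the points $\pm d_1(t),\ldots,\pm d_n(t)$; in particular, $0$ is never an eigenvalue of $H(t)$.

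Applying Rellich's theorem for real analytic Hermitian curves (see \cite{kato, rellich}), I would obtain, on some interval $(t_0-\epsilon,\, t_0+\epsilon)$, real analytic eigenvalue maps $\mu_1(t),\ldots,\mu_{2n}(t)$ of $H(t)$ together with a corresponding orthonormal set of real analytic eigenvector maps $y_1(t),\ldots,y_{2n}(t)$. Since $H(t)$ is nonsingular and each $\mu_k$ is continuous, each $\mu_k$ has constant sign on the interval (after shrinking $\epsilon$ if necessary). For part (i), I would select the $m$ indices $k$ with $\mu_k(t_0)=d$ and set $\tilde d_k(t) := \mu_k(t)$; for part (ii), I would select the $n$ indices with $\mu_k(t_0)>0$.

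In either case, writing $y_k(t) = A(t)^{1/2}\tilde u_k(t) - \imath A(t)^{1/2}\tilde v_k(t)$ with
\[
\tilde u_k(t) = A(t)^{-1/2}\Re\, y_k(t), \qquad \tilde v_k(t) = -A(t)^{-1/2}\Im\, y_k(t),
\]
defines real analytic $\mathbb{R}^{2n}$-valued maps, and Lemma \ref{lem4} makes each $(\tilde u_k(t),\tilde v_k(t))$ a symplectic eigenvector pair of $A(t)$ corresponding to $\tilde d_k(t)$. By Proposition \ref{prop1sev}, orthogonality of the $y_k$'s translates into symplectic orthogonality of the pairs $(\tilde u_k,\tilde v_k)$, and the identity $\|y_k(t)\|^2 = 2\tilde d_k(t)\langle \tilde u_k(t),J\tilde v_k(t)\rangle$ from the same proposition yields $\langle \tilde u_k(t),J\tilde v_k(t)\rangle = 1/(2\tilde d_k(t))$; rescaling by the real analytic factor $\sqrt{2\tilde d_k(t)}>0$ then produces a symplectically orthonormal real analytic family of pairs. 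This proves (i), and applying the same construction to the $n$ indices with $\mu_k(t_0)>0$ yields a full real analytic symplectic eigenbasis, giving (ii).

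The main obstacle I expect is not the bookkeeping above but the availability of the Rellich-type input: analytic (as opposed to merely smooth or continuous) selection of eigenvectors of Hermitian matrices fails in general for smooth curves, and it is exactly the real analyticity of $t\mapsto A(t)$, propagated through the analytic square root, that lets us invoke it. The example following Theorem \ref{thm3.1.d} illustrates that without analyticity, even continuous selection of symplectic eigenvector pairs can fail altogether, so this reduction is essentially the only viable route.
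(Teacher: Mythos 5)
Your proposal is correct and follows essentially the same route as the paper: pass to the real analytic Hermitian curve $H(t)=\imath A(t)^{1/2}JA(t)^{1/2}$ (the paper proves analyticity of the square root directly in Proposition \ref{prop71} rather than citing functional calculus), invoke the Rellich--Kato analytic eigenvalue/eigenvector selection, and translate back via Lemma \ref{lem4} and Proposition \ref{prop1sev} with the $\sqrt{2\tilde d_k(t)}$ rescaling. The only cosmetic difference is in (ii), where the paper applies part (i) to each distinct eigenvalue and uses Proposition \ref{prop1b} and Corollary \ref{cor3sev} to get cross-group symplectic orthogonality, while you appeal to the full orthonormal analytic eigenvector system for $H(t)$; both are fine.
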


Similar to the case of differentiability, we use the results on analyticity of eigenvalues of Hermitian matrices.
For this we need the following proposition.
Since we could not find an explicit proof of this in literature
we include its proof in the appendix for the convenience of the reader.

\begin{prop}\label{prop71}
Let $t\mapsto A(t)$ be a map from an open interval $I$ to $\mathbb{P}(m)$	
that is real analytic at $t_0\in I.$
Then the composite map $t\mapsto\left(A(t)\right)^{1/2}$ is also real analytic at $t_0.$
\end{prop}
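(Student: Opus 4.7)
The plan is to establish Proposition \ref{prop71} via the Riesz--Dunford holomorphic functional calculus, representing the matrix square root as a contour integral. The starting point is the identity
$$A^{1/2}=\frac{1}{2\pi\imath}\oint_{\Gamma}\sqrt{w}\,(wI-A)^{-1}\,dw,$$
valid when $A\in\mathbb{P}(m)$ and $\Gamma$ is any simple closed positively oriented contour enclosing the spectrum of $A$ inside the open right half-plane (where the principal branch of $\sqrt{\cdot}$ is holomorphic). Real analyticity of $t\mapsto A(t)^{1/2}$ will then follow by showing that the right-hand side extends holomorphically in a complex parameter.

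First, since $t\mapsto A(t)$ is real analytic at $t_0$, it extends to a matrix-valued holomorphic map $z\mapsto A(z)$ on some complex disc $D\subset\mathbb{C}$ centred at $t_0$. The spectrum $\sigma(A(t_0))$ is a finite subset of $(0,\infty)$, and since eigenvalues depend continuously on the matrix entries, after shrinking $D$ I may assume that $\sigma(A(z))$ is contained, for every $z\in D$, in a common compact set $K\subset\{\Re w>0\}$. Fix once and for all a simple closed positively oriented contour $\Gamma$ that lies in the open right half-plane, encloses $K$, and therefore avoids $\sigma(A(z))$ for every $z\in D$.

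Now define the matrix-valued function
$$B(z):=\frac{1}{2\pi\imath}\oint_{\Gamma}\sqrt{w}\,(wI-A(z))^{-1}\,dw,\qquad z\in D.$$
For each fixed $w\in\Gamma$ the resolvent $z\mapsto(wI-A(z))^{-1}$ is holomorphic on $D$, with continuous joint dependence on $(z,w)$ guaranteed by compactness of $\Gamma$. Differentiation under the integral sign then shows that $B$ is a holomorphic matrix-valued function on $D$. For real $t\in D\cap I$ the matrix $A(t)$ is positive definite, so the Riesz--Dunford calculus identifies $B(t)$ with the unique positive definite square root $A(t)^{1/2}$. Thus $t\mapsto A(t)^{1/2}$ admits a holomorphic extension to $D$, which is precisely what it means to be real analytic at $t_0$.

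The one delicate point is the uniform containment of $\sigma(A(z))$ in a fixed compact set $K\subset\{\Re w>0\}$ as $z$ varies in a complex neighbourhood of $t_0$. This I expect to obtain from the continuous dependence of the roots of the characteristic polynomial on its coefficients, combined with the strict positivity of $\sigma(A(t_0))$; everything else in the argument is a standard consequence of the holomorphic functional calculus.
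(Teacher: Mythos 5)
Your proposal is correct, but it follows a genuinely different route from the paper. You complexify the parameter, note that the spectrum of $A(z)$ stays in a fixed compact subset of the open right half-plane for $z$ in a small complex disc (a standard consequence of continuity of the roots of the characteristic polynomial, exactly as you indicate), and then represent the square root by the Riesz--Dunford integral $\frac{1}{2\pi\imath}\oint_{\Gamma}\sqrt{w}\,(wI-A(z))^{-1}\,dw$ over a fixed contour, so that holomorphy in $z$ follows by differentiation under the integral sign and real analyticity on the real axis follows by restriction. The paper instead works entirely with real-variable Taylor series: it expands $A(t)=A(0)+\sum_j C_j t^j$, uses the Taylor expansion of the map $f(A)=A^{1/2}$ at $A(0)$ in terms of the Fr\'echet derivatives $D^k f(A(0))$ (citing \cite{mn}), proves an auxiliary lemma (Lemma \ref{lema1}) allowing bounded $k$-linear maps to be applied termwise to absolutely convergent series, and controls the resulting double series via the bound $\|D^k f(A(0))\|=|f^{(k)}(\lambda_0)|$ from \cite{bss}, where $\lambda_0$ is the smallest eigenvalue of $A(0)$, before rearranging into a single power series in $t$. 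Your argument is shorter and immediately generalizes to any function holomorphic on a neighbourhood of the spectrum, at the cost of invoking the holomorphic functional calculus and complexification; the paper's argument avoids complex-analytic machinery but needs the multilinear series lemma and the specific norm identity for derivatives of operator functions. Either proof establishes the proposition.
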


We use the following result for eigenvalues and eigenvectors for Hermitian matrices to prove Theorem \ref{thm_main2}.
See    Kato \cite{kato} and Rellich \cite{rellich}.

\begin{prop}\label{thm8}
Let $t\mapsto H(t)$ be a map from an open interval $I$ to $\mathbb{H}(k)$
that is real analytic at $t_0.$
If $\lambda$ is an eigenvalue of $H(t_0)$ with multiplicity $m,$
then there exists an $\epsilon>0$ so that 
we can find $m$ eigenvalue functions $\lambda_1,\ldots,\lambda_m:(t_0-\epsilon,t_0+\epsilon)\to\mathbb{R}$ and
$m$ corresponding orthonormal eigenvector functions $x_1,\ldots,x_m:(t_0-\epsilon,t_0+\epsilon)\to\mathbb{C}^{k}$ that are real analytic at $t_0.$
Also $\lambda_i(t_0)=\lambda$ for all $i=1,\ldots,m.$
\end{prop}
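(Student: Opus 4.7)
The plan is to follow the classical Rellich--Kato strategy, reducing the eigenvalue problem near $\lambda$ to an analytic problem on a fixed $m$-dimensional subspace via a holomorphic spectral projection and Kato's transformation function. First I would extend $H(t)$ to a holomorphic matrix-valued map on an open complex neighbourhood $\Omega$ of $t_{0}$; this is possible because real analyticity at $t_{0}$ means $H$ is the sum of a convergent power series $H(t)=\sum_{k\ge0}H_{k}(t-t_{0})^{k}$ in some real interval around $t_{0}$, and the same series converges in a complex disc. Note that Hermiticity is lost for non-real $t$, but this will not be needed for the analytic construction.

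Next I would choose a small positively oriented circle $\Gamma\subset\mathbb{C}$ centred at $\lambda$ whose interior contains no other eigenvalue of $H(t_{0})$. Because eigenvalues depend continuously on the matrix entries and $H$ is continuous in $t$, there is a neighbourhood $U\subset\Omega$ of $t_{0}$ on which $\Gamma$ avoids the spectrum of $H(t)$ and encloses exactly $m$ eigenvalues (counted with multiplicity). On $U$ the resolvent $(H(t)-zI)^{-1}$ is jointly holomorphic in $(z,t)$ for $z\in\Gamma$, so the spectral projection
\begin{equation*}
P(t)=-\frac{1}{2\pi \imath}\oint_{\Gamma}(H(t)-zI)^{-1}\,dz
\end{equation*}
is holomorphic in $t$ on $U$, has constant rank $m$, and agrees with the orthogonal projection onto the direct sum of the eigenspaces of the $m$ eigenvalues near $\lambda$ whenever $t$ is real.

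To produce an analytically varying basis I would invoke Kato's transformation function: define the matrix-valued function $F(t)$ as the unique solution of the linear ODE $F'(t)=[P'(t),P(t)]F(t)$ with $F(t_{0})=I$. Standard arguments show $F(t)$ is holomorphic on (a possibly smaller neighbourhood in) $U$, is unitary for real $t$, and satisfies $F(t)P(t_{0})F(t)^{-1}=P(t)$. Pick any orthonormal basis $e_{1},\dots,e_{m}$ of $\mathrm{Ran}\,P(t_{0})$. Then $f_{i}(t):=F(t)e_{i}$ give a real-analytic orthonormal basis of $\mathrm{Ran}\,P(t)$ for real $t$. Form the $m\times m$ matrix $\tilde{H}(t)$ with entries $\tilde{H}_{ij}(t)=\langle f_{i}(t),H(t)f_{j}(t)\rangle$; this is real analytic in $t$ and Hermitian for real $t$, with $\tilde{H}(t_{0})=\lambda I_{m}$.

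At this point the problem has been reduced from an $m$-fold degenerate eigenvalue of a $k\times k$ analytic Hermitian family to the same question for the matrix $\tilde{H}(t)-\lambda I_{m}$, whose eigenvalue at $t_{0}$ is $0$ with full multiplicity $m$. I would finish by induction on $m$: if the degeneracy is completely split at first order one diagonalises via the characteristic polynomial and the implicit function theorem applied to simple roots (together with the simple-eigenvalue result analogous to Theorem \ref{thm_main1}); otherwise one passes to $(\tilde{H}(t)-\lambda I_{m})/(t-t_{0})$ (still analytic and Hermitian for real $t$) and repeats. The main obstacle is precisely this last step: without Hermiticity the roots of the characteristic polynomial would in general only have Puiseux, not analytic, expansions. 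The crucial input is that because $\tilde{H}(t)$ is Hermitian for every real $t$ near $t_{0}$, all such roots must be real on the whole real interval, which forces the Puiseux branches to be genuine power series and thus yields the claimed real-analytic eigenvalue functions $\lambda_{1},\dots,\lambda_{m}$. The corresponding eigenvectors of $\tilde{H}(t)$ (in $\mathbb{C}^{m}$) are then obtained analytically from the resolvent on each block, and pushing them back through $F(t)$ and the basis $\{f_{i}(t)\}$ gives the desired analytic orthonormal eigenvectors $x_{1}(t),\dots,x_{m}(t)$ of $H(t)$ with $\lambda_{i}(t_{0})=\lambda$.
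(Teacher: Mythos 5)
Your sketch is essentially the classical Rellich--Kato argument (holomorphic extension, Riesz projection over a contour isolating $\lambda$, Kato's transformation function $F'=[P',P]F$ to reduce to an analytic Hermitian $m\times m$ block, then Rellich's observation that Hermiticity forces the Puiseux branches to be genuine power series), which is precisely the proof the paper points to: the paper does not prove Proposition \ref{thm8} itself but quotes it from Kato and Rellich. So the proposal is correct and takes the same route as the cited source; the only places where your outline leans on the classics without detail are the termination of the splitting/division iteration and the analyticity of the orthonormal eigenvectors at points where branches cross, both of which are handled in Rellich's and Kato's treatments.
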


\noindent{\it\bf{\it Proof of Theorem \ref{thm_main2}}}:
Let $H(t)$ be the Hermitian matrix $\imath A^{1/2}(t)JA^{1/2}(t).$
Since $t\mapsto A(t)$ is real analytic at $t_0,$ by Proposition \ref{prop71}, the map $t\mapsto H(t)$ is also real analytic at $t_0.$
By Proposition \ref{prop1sev}, the multiplicity of the eigenvalue $d$ of $H(t_0)$ is $m.$
Hence by Proposition \ref{thm8}, there exist an $\epsilon>0,$
and $m$ functions $\tilde{d}_1,\ldots, \tilde{d}_m:(t_0-\epsilon,t_0+\epsilon)\to\mathbb{R}$ and
$m$ functions $x_1,\ldots,x_m:(t_0-\epsilon,t_0+\epsilon)\to\mathbb{C}^{2n}$ that are real analytic at $t_0$ such that
$\tilde{d}_1(t), \ldots, \tilde{d}_m(t)$ are $m$ eigenvalues of $H(t)$
and $\{x_1(t), x_2(t),\ldots, x_m(t)\}$ is a corresponding orthonormal set of eigenvectors.
Also $\tilde{d}_j(t_0)=d$ for all $j=1,\ldots,m.$
Since $H(t)$ is invertible for every $t$ and $d>0,$
 each $\tilde{d}_j(t)>0.$
Hence $\tilde{d}_j(t)$ is a symplectic eigenvalue of $A(t)$ for every $t\in (t_0-\epsilon,t_0+\epsilon)$ and $j=1,\ldots,m.$
Let $x_j(t)=\overline{u}_j(t)-\imath \overline{v}_j(t)$ be the Cartesian decomposition of $x_j(t).$
For every $t\in (t_0-\epsilon,t_0+\epsilon)$ let $\tilde{u}_j(t)=\sqrt{2\tilde{d}_j(t)}A^{-1/2}(t)\overline{u}_j(t)$ and $\tilde{v}_j(t)=\sqrt{2\tilde{d}_j(t)}A^{-1/2}(t)\overline{v}_j(t).$
Since $\tilde{d}_j(t)$ and $A^{-1/2}(t)$ are real analytic at $t_0,$ $\tilde{u}_j(t)$ and $\tilde{v}_j(t)$ are real analytic at $t_0.$
Finally by Proposition \ref{prop1sev}, $\{(\tilde{u}_j(t),\tilde{v}_j(t)):j=1,\ldots,m\}$ is a symplectically orthonormal set of symplectic eigenvector pairs of $A(t)$
corresponding to $\tilde{d}_1(t),\ldots,\tilde{d}_m(t).$
This proves (i).

Let $\overline{d}_1<\cdots<\overline{d}_k$ be distinct symplectic eigenvalues of $A(t_0)$ with multiplicities $m_1,\ldots,m_k,$ respectively.
By statement (i) of the theorem, we can find an $\epsilon>0$ and $n$
symplectic eigenvalue functions
$\tilde{d}_{1,1}(t),\ldots,\tilde{d}_{1,m_1}(t),\ldots,\tilde{d}_{k,1}(t),\ldots,\tilde{d}_{k,m_k}(t)$ of $A(t)$ on $(t_0-\epsilon,t_0+\epsilon)$
that are real analytic at $t_0.$
Also for each $j=1,\ldots,k,$ we can choose corresponding symplectically orthonormal symplectic eigenvector pairs $(\tilde{u}_{j,i}(t),\tilde{v}_{j,i}(t)),$ $1\le i\le m_j,$
that are real analytic at $t_0.$Using Proposition \ref{prop1b}, we can assume that $\epsilon>0$ is small enough so that
for all $t\in (t_0-\epsilon,t_0+\epsilon)$ $\tilde{d}_{r,i}(t)\ne\tilde{d}_{s,j}(t)$ for all $1\le i\le m_r$ and $1\le j\le m_s,$ $r\ne s.$
Thus by Corollary \ref{cor3sev}
the symplectic eigenvector pairs $(\tilde{u}_{j,i}(t),\tilde{v}_{j,i}(t)),$ $1\le i\le m_j,$ $1\le j\le k,$ form the required symplectic eigenbasis.
\qed
\vskip.1in

By arguing in a similar way as in the proof of Theorem \ref{thm_main2}(i) and
using the analogous result for eigenvalues and eigenvectors of Hermitian matrices (see \cite{kato}, Ch.II, Sec.6), we can obtain the following theorem.
\begin{thm}\label{thm_global}
Let $t\mapsto A(t)$ be a real analytic map from an open interval $I$ to $\mathbb{P}(2n).$
Then we can choose $n$ symplectic eigenvalue functions
and corresponding symplectic eigenbasis map such that they are real analytic on $I.$
\end{thm}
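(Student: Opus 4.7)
The plan is to upgrade the local selection carried out in the proof of Theorem \ref{thm_main2}(i) to a global one over all of $I$, by invoking the global real analyticity theorem for one-parameter families of Hermitian matrices due to Rellich (see \cite{rellich, kato}, Ch.II, Sec.6) and then pushing the resulting spectral data through the bijection of Proposition \ref{prop1sev}.

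First I would form the Hermitian family $H(t) = \imath A(t)^{1/2} J A(t)^{1/2}$. By Proposition \ref{prop71} the map $t \mapsto A(t)^{1/2}$ is real analytic at every point of $I$, hence real analytic on all of $I$; therefore $t \mapsto H(t)$ is a real analytic map from $I$ into $\mathbb{H}(2n)$.

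Next, I would apply the global Kato--Rellich theorem to $H(t)$: there exist $2n$ real analytic functions $\mu_1, \ldots, \mu_{2n} : I \to \mathbb{R}$ enumerating (with multiplicity) the eigenvalues of $H(t)$, together with real analytic orthonormal eigenvector functions $x_1, \ldots, x_{2n} : I \to \mathbb{C}^{2n}$. By Lemma \ref{lem4}, the eigenvalues of $H(t)$ are exactly $\pm d_j(A(t))$ for $j=1,\ldots,n$. Since $A(t)$ is positive definite and $J$ invertible, $H(t)$ is invertible for every $t$, so each analytic function $\mu_j$ is nowhere zero; as $I$ is connected, the sign of each $\mu_j$ is constant. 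I would then retain those $n$ indices for which $\mu_j > 0$ on $I$ and relabel the resulting eigenvalue functions and eigenvectors as $\tilde{d}_1,\ldots,\tilde{d}_n$ and $x_1,\ldots,x_n$. Each $\tilde{d}_j(t)$ is then a symplectic eigenvalue of $A(t)$, real analytic on $I$.

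Finally, I would convert this spectral data for $H(t)$ back into symplectic data for $A(t)$ by the pointwise recipe already used in the proof of Theorem \ref{thm_main2}(i). Writing the Cartesian decomposition $x_j(t) = \overline{u}_j(t) - \imath \overline{v}_j(t)$, I set
\[
\tilde{u}_j(t) = \sqrt{2\tilde{d}_j(t)}\, A(t)^{-1/2} \overline{u}_j(t), \qquad \tilde{v}_j(t) = \sqrt{2\tilde{d}_j(t)}\, A(t)^{-1/2} \overline{v}_j(t).
\]
Each $\tilde{u}_j$ and $\tilde{v}_j$ is a composition of real analytic maps, and is therefore real analytic on $I$. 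Proposition \ref{prop1sev} then shows that $\{(\tilde{u}_j(t),\tilde{v}_j(t)) : 1\le j\le n\}$ is a symplectically orthonormal set of symplectic eigenvector pairs of $A(t)$ corresponding to $\tilde{d}_1(t),\ldots,\tilde{d}_n(t)$, i.e., a symplectic eigenbasis.

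The only new ingredient beyond the local statement Theorem \ref{thm_main2}(i) is the appeal to the global Kato--Rellich selection theorem on the whole interval $I$, and the only mildly subtle point is the constant-sign argument that allows a coherent global choice of the ``positive half'' of the spectrum of $H(t)$. I do not anticipate any genuine obstacle beyond correctly citing the global analyticity result, since the conversion between the Hermitian and symplectic pictures is identical to that used in the local case.
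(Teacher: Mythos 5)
Your proposal is correct and follows essentially the same route as the paper, which proves this theorem by repeating the argument of Theorem \ref{thm_main2}(i) with the global Kato--Rellich analytic selection theorem for Hermitian curves in place of the local one. Your added details (constant sign of the nowhere-vanishing analytic eigenvalue branches of $H(t)=\imath A(t)^{1/2}JA(t)^{1/2}$, and the conversion back via $A(t)^{-1/2}$ and Proposition \ref{prop1sev}) are exactly the steps the paper leaves implicit.
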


We have seen that the ordered tuple $d_1\le d_2\le\cdots\le d_n$ of symplectic eigenvalues need not be differentiable when the multiplicities are greater than one.
But they can be proved to be piecewise real analytic if the map $t\mapsto A(t)$ is real analytic on $I.$

\begin{thm}\label{thm3.2.pa}
Let $t\mapsto A(t)$ be a real analytic map from an open interval $I$ to $\mathbb{P}(2n),$
and let $[a,b]$ be any compact interval contained in $I.$
Then for each $j=1,\ldots,n,$ the map $t\mapsto d_j(t)=d_j(A(t))$ is piecewise real analytic on $[a,b].$
Further for each $t\in  [a,b],$ we can find a symplectic eigenbasis
$\{u_1(t),\ldots,u_n(t),v_1(t),\ldots,v_n(t)\}$ of $A(t)$ corresponding to $d_1(t),\ldots,d_n(t)$ such that the maps
$u_1,\ldots,u_n,v_1,\ldots,v_n$ are also piecewise real analytic on $[a,b].$
\end{thm}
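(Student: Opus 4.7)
The plan is to reduce Theorem \ref{thm3.2.pa} to Theorem \ref{thm_global} by exploiting the rigidity of real analytic functions on an interval. First, I would apply Theorem \ref{thm_global} to the map $t\mapsto A(t)$ to obtain $n$ real analytic symplectic eigenvalue functions $\tilde{d}_1,\ldots,\tilde{d}_n$ on $I$ and a real analytic symplectic eigenbasis $\{(\tilde{u}_1(t),\tilde{v}_1(t)),\ldots,(\tilde{u}_n(t),\tilde{v}_n(t))\}$ of $A(t).$ These functions need not be the ordered tuple, so the task reduces to showing that the ordering permutation is locally constant away from a finite exceptional set in $[a,b].$

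Next I would invoke the identity principle for real analytic functions: for each pair $i\ne j,$ the analytic function $\tilde{d}_i-\tilde{d}_j$ is either identically zero on $I$ or its zero set is discrete in $I.$ Let $Z_{ij}\subset [a,b]$ denote the (possibly empty) zero set of $\tilde{d}_i-\tilde{d}_j$ when these two functions are not identically equal. Each such $Z_{ij}$ is discrete and $[a,b]$ is compact, so $Z_{ij}$ is finite. Hence the union $Z=\bigcup_{i<j}Z_{ij}$ is a finite subset of $[a,b].$

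On each of the finitely many open subintervals into which $Z$ partitions $[a,b],$ the sign of $\tilde{d}_i(t)-\tilde{d}_j(t)$ is constant for every pair $(i,j),$ so the permutation $\sigma$ that arranges $\tilde{d}_1(t),\ldots,\tilde{d}_n(t)$ in increasing order is constant on that subinterval. On such a subinterval we therefore have $d_j(t)=\tilde{d}_{\sigma(j)}(t)$ and $(u_j(t),v_j(t))=(\tilde{u}_{\sigma(j)}(t),\tilde{v}_{\sigma(j)}(t))$ for each $j,$ which are real analytic by construction. Assigning arbitrary (or limiting) values at the finitely many points of $Z$ yields the desired piecewise real analytic selection.

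The main obstacle is the usual one that the ordered tuple is genuinely only piecewise analytic, not globally, and one must argue that the exceptional set of order changes is finite on $[a,b].$ This is where compactness of $[a,b]$ together with the analytic identity principle is essential: without compactness the crossing set could accumulate, and without analyticity (e.g., merely $C^\infty$) the identity principle would fail, as illustrated by the smooth example in the previous section. Everything else is a bookkeeping argument selecting the correct branch on each subinterval.
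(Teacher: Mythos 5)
Your proposal is correct and follows essentially the same route as the paper: apply Theorem \ref{thm_global}, use real analyticity together with compactness of $[a,b]$ to conclude that the set of crossing points of non-identical branches is finite, and then reorder the analytic branches (and the corresponding eigenvector pairs) off that finite set to recover the ordered tuple $d_1(t)\le\cdots\le d_n(t).$ Your per-subinterval treatment of the sorting permutation is, if anything, slightly more careful than the paper's single global reordering on $[a,b]\setminus E.$
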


\begin{proof}
By Theorem \ref{thm_global}, we can find
$n$ symplectic eigenvalues $\tilde{d}_1(t),\ldots,\tilde{d}_n(t)$ of $A(t)$ and a corresponding symplectic eigenbasis
$\{\tilde{u}_1(t),\ldots,\tilde{u}_n(t),\tilde{v}_1(t),\ldots,\tilde{v}_n(t)\}$ such that each of the maps
$\tilde{d}_j,\tilde{u}_j$ and $\tilde{v}_j$ are real analytic on $I.$

Define $\mathcal{I}$ to be the set of all ordered pairs $(i,j),$ $1\le i\ne j\le n,$ such that
$\tilde{d}_i(t)\ne \tilde{d}_j(t)$ for at least one $t$ in $[a,b].$
Let $E$ be the set of all points $t$ in $[a,b]$ such that $\tilde{d}_i(t)=\tilde{d}_j(t)$ for some $(i,j)\in\mathcal{I}.$
By using the real analyticity of the maps $\tilde{d}_1,\ldots,\tilde{d}_n$ and the definition of the set $\mathcal{I},$
we can see that $E$ is finite.
Then for every $i=1,\ldots,n$ the multiplicity of
 $\tilde{d}_i(t)$ is the same for all $t$ in $[a,b]\setminus E.$
Hence $\tilde{d}_1,\ldots,\tilde{d}_n$ can be reordered so that
$\tilde{d}_i(t)=d_i(t)$ for all $t\in [a,b]\setminus E.$
The theorem thus follows by suitably reordering the symplectic eigenvalues $\tilde{d}_1,\ldots,\tilde{d}_n$
and correspondingly reordering the symplectic eigenvalue pairs $\tilde{u}_1,\ldots,\tilde{u}_n,\tilde{v}_1,\ldots,\tilde{v}_n.$
\end{proof}

\section{Symplectic analogue of Lidskii's theorem and other applications}
The main object of this section is to derive a Lidskii type result, that is, a
majorisation inequality between the symplectic eigenvalues of the sum of two positive definite matrices and those of the two matrices.
We start this section by introducing a notion of symplectic projections that is of independent interest
and is useful in the proof of our main theorem of this section.

Let $S=\{x_1,\ldots,x_k,y_1,\ldots,y_k\}$ be a symplectically orthonormal subset of $\mathbb{R}^{2n}.$
Define the map $P_S$ on $\mathbb{R}^{2n}$ as
\begin{equation}
P_S(x)=\sum\limits_{i=1}^{k}\left(\langle x,Jy_i\rangle Jy_i+\langle x,Jx_i\rangle Jx_i\right).\label{eqflr1}
\end{equation}
Suppose $M$ is the $2n\times 2k$ matrix
\begin{equation}
M=\begin{bmatrix}Jx_1, \ldots, Jx_k, Jy_1, \ldots, Jy_k\end{bmatrix}.\label{eqflr2}
\end{equation}
It is easy to see that $P_S=MM^T,$ and so, it is a positive semidefinite matrix.
In fact, it can be seen that the kernel of $P_S$ is the symplectic complement of $S,$ and
hence $P_S\in\mathbb{P}_s(2n)$ with symplectic eigenvalues $1$ and $0$ with multiplicities $k$ and $n-k,$ respectively.
We call $P_S$ to be the {\it symplectic projection} associated with the set $S.$
If $k=n,$ i.e., $S$ is a symplectic basis of $\mathbb{R}^{2n},$ then $P_S$ is a positive definite symplectic matrix with all its symplectic eigenvalues $1.$

\begin{prop}\label{propflr1}
Let $S=\{u_1,\ldots,u_k,v_1,\ldots,v_k\}$ and $T=\{x_1,\ldots,x_m,y_1,\ldots,y_m\}$
be two symplectically orthonormal subsets of $\mathbb{R}^{2n},$
and let $P$ and $Q$ be the symplectic projections associated with them.
Let $M$ and $N$ be the $2n\times 2k$  and $2n\times 2m$ matrices given by \eqref{eqflr2} corresponding to the sets $S$ and $T,$ respectively.
Then $P=Q$ if and only if $k=m$ and $M=NU$ for some $2k\times 2k$ orthosymplectic (symplectic as well as orthogonal) matrix $U.$
\end{prop}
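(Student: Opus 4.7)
I would prove the equivalence by direct matrix computation. The easy direction is immediate: if $k=m$ and $M=NU$ with $U$ orthosymplectic, then
\[
P=MM^T=N(UU^T)N^T=NN^T=Q,
\]
using only that $U$ is orthogonal. For the converse I plan four steps: (i) show that $M$ and $N$ have full column rank and use this to force $k=m$; (ii) produce $U$ with $M=NU$ from the equality of column spaces of $M$ and $N$; (iii) verify that $U$ is orthogonal; (iv) verify that $U$ is symplectic.

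The cornerstone of the converse is the identity $M^T J M = J_{2k}$, where $J_{2k}$ denotes the $2k\times 2k$ matrix of the form \eqref{eqflr01}. Writing $M=JR$ with $R=[u_1|\cdots|u_k|v_1|\cdots|v_k]$, one computes $M^TJM=R^T(J^TJJ)R=R^TJR$ using $J^TJJ=J$, and the symplectic orthonormality of $S$ is precisely the statement that $R^TJR=J_{2k}$. Since $J_{2k}$ is invertible, this forces $\mathrm{rank}(M)=2k$, and hence $\mathrm{rank}(P)=\mathrm{rank}(MM^T)=2k$; the same argument gives $\mathrm{rank}(Q)=2m$. The assumption $P=Q$ then gives $k=m$, completing step (i).

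For step (ii), the range of $P$ (resp.\ $Q$) is the column space of $M$ (resp.\ $N$); from $P=Q$ these column spaces coincide, and full column rank of $N$ makes the matrix $U$ with $M=NU$ unique. For step (iii), substituting $M=NU$ into $MM^T=NN^T$ yields $N(UU^T-I)N^T=0$; left-multiplying by $(N^TN)^{-1}N^T$ and right-multiplying by $N(N^TN)^{-1}$ (both valid since $N$ has full column rank) gives $UU^T=I$. For step (iv), substitute $M=NU$ into $M^T J M=J_{2k}$ and use $N^T J N=J_{2k}$ as well; this becomes $U^T J_{2k} U=J_{2k}$, so $U$ is symplectic. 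Combined with orthogonality, this is precisely orthosymplecticity.

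The only mildly delicate step is the derivation of $M^TJM=J_{2k}$, which does the real work of converting symplectic orthonormality of the set $S$ into a matrix identity that can be pushed through $M=NU$. Everything else is routine linear algebra from $MM^T=NN^T$ together with full column rank of $N$; there is no serious obstacle.
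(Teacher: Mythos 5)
Your proof is correct, and its converse direction takes a genuinely different route from the paper's. The paper constructs $U$ explicitly: setting $\alpha_{ij}=\langle x_j,Ju_i\rangle$ and $\beta_{ij}=\langle x_j,Jv_i\rangle$, it uses $P=Q$ to expand $x_j$ and $y_j$ in the symplectic basis coming from $S$ (equations \eqref{eqflr3}--\eqref{eqflr4}) and takes $U=\begin{bmatrix} Y & X\\ -X & Y\end{bmatrix}$ with $X=[\alpha_{ij}]$, $Y=[\beta_{ij}]$, checking that its columns are orthonormal and symplectically orthonormal from the symplectic orthonormality of $T$; it concludes $N=MU$, which is equivalent to the stated $M=NU$ since orthosymplectic matrices form a group. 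You instead encode symplectic orthonormality once and for all in the identity $M^TJM=J_{2k}$ (your computation $J^TJJ=J$ and $R^TJR=J_{2k}$ is right), deduce full column rank and hence $k=m$ from $\operatorname{rank}(MM^T)=\operatorname{rank}(M)$, obtain $U$ abstractly from equality of the ranges of $P=MM^T$ and $Q=NN^T$, and then read off $UU^T=I$ from $MM^T=NN^T$ and $U^TJ_{2k}U=J_{2k}$ from $M^TJM=N^TJN$. What each buys: the paper's argument is constructive and exhibits $U$ in terms of the symplectic inner products between the two sets, making the block structure of the orthosymplectic group visible, whereas yours is shorter and pushes all verification into two clean matrix identities plus rank arguments; the one point that carries the weight in your version, which you do justify, is that the range of $MM^T$ equals the column space of $M$, which gives both $k=m$ and the existence (and uniqueness) of $U$.
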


\begin{proof}
If $k=m$ and $M=NU,$ the equality $P=Q$ easily follows from the orthogonality of $U,$
and the fact that $P=MM^T$ and $Q=NN^T.$

Conversely, let $P=Q.$
Clearly the subspaces spanned by $S$ and $T$ are the same, and hence $k=m.$
By \eqref{eqflr1}
$$Px_j=\sum\limits_{i=1}^{k}\left(\alpha_{ij}Ju_i+\beta_{ij} Jv_i\right)$$
for all $j=1,\ldots,k.$
Here $\alpha_{ij}=\langle x_j,Ju_i\rangle$ and $\beta_{ij}=\langle x_j,Jv_i\rangle,$ $1\le i,j\le k.$
Since $P=Q,$ $Px_i=Jy_i.$
This gives
\begin{equation}
y_j=\sum\limits_{i=1}^{k}\left(\alpha_{ij}u_i+\beta_{ij}v_i\right).\label{eqflr3}
\end{equation}
Also since $x_j$ belongs to the span of the symplectically orthonormal vectors $u_1,\ldots,u_k,v_1,\ldots,v_k,$
\begin{eqnarray}
x_j &=&\sum\limits_{i=1}^{k}\left(\langle x_j,Jv_i\rangle u_i-\langle x_j,Ju_i\rangle v_i\right)\\
&=&\sum\limits_{i=1}^{k}\left(\beta_{ij}u_i-\alpha_{ij}v_i\right).\label{eqflr4}
\end{eqnarray}
Let $X$ and $Y$ be the $k\times k$ matrices $X=\begin{bmatrix}\alpha_{ij}\end{bmatrix}$ and $Y=\begin{bmatrix}\beta_{ij}\end{bmatrix},$
and $U$ be the $2k\times 2k$ matrix
$$U=\begin{bmatrix}Y & X\\
-X & Y\end{bmatrix}.$$
Using the fact that $x_1,\ldots,x_k,y_1,\ldots,y_k$ are symplectically orthonormal,
we can see that the columns of $U$ are orthonormal as well as symplectically orthonormal vectors in $\mathbb{R}^{2k}.$
Finally, from \eqref{eqflr3} and \eqref{eqflr4} we obtain
$N=MU.$
\end{proof}

We now give an equivalent statement for Williamson's theorem in terms of symplectic projections.

\begin{prop}\label{propflr2}
For every $B$ in $\mathbb{P}(2n)$ there exist distinct positive numbers $\mu_1,\ldots,\mu_m$ and
symplectic projections $P_1,\ldots,P_m$  that satisfy the following conditions.
\begin{itemize}
\item[(i)] $P_jJP_k=0$ for all $j\ne k,$ $j,k=1,\ldots,m.$
\item[(ii)] $\sum\limits_{k=1}^{m}P_kJP_k=J.$
\item[(iii)] $B=\sum\limits_{k=1}^{m}\mu_kP_k.$
\end{itemize}
The numbers $\mu_1,\ldots,\mu_m$ and the symplectic projections $P_1,\ldots,P_m$
are uniquely determined by the above conditions.
Further, for every $1\le j\le m,$ $\mu_j$ is a symplectic eigenvalue of $B$ and $P_j$ is the symplectic projection associated with a
symplectically orthonormal set of eigenvector pairs of $B$ corresponding to $\mu_j.$
\end{prop}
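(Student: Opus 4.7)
The plan is to prove existence directly via Williamson's theorem, then handle uniqueness by showing that the three conditions force $\{P_j\}$ to encode a usable spectral resolution of a matrix intrinsic to $B$.

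For existence, I would apply Williamson's theorem to obtain a symplectic basis $\{u_1,\dots,u_n,v_1,\dots,v_n\}$ with $Bu_i = d_i Jv_i$ and $Bv_i = -d_i J u_i$. Let $\mu_1 < \cdots < \mu_m$ be the distinct values among the $d_i$, set $S_l = \{u_i,v_i : d_i = \mu_l\}$, and let $P_l$ be the symplectic projection of $S_l$ as in \eqref{eqflr1}. Verifying (iii) amounts to applying $\sum_l \mu_l P_l$ to each basis vector $u_k$ (resp.\ $v_k$) and using symplectic orthonormality, which reduces the sum to a single term $\mu_{l(k)} Jv_k = d_k Jv_k = Bu_k$. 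Condition (i) is a direct computation: for $(u_i,v_i) \in S_k$, one finds $P_l v_i = P_l u_i = 0$ whenever $l \ne k$, by symplectic orthogonality across different $S_l$'s. Condition (ii) follows by expanding a general $x$ in the symplectic basis and checking that the sum produces $Jx$.

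For uniqueness, I would introduce the operators $E_l := -(JP_l)^2$ and show that they form a partition of the identity. First, condition (ii), multiplied on the right by $JP_j$ and using condition (i) to kill the off-diagonal terms, yields $P_j J P_j J P_j = -P_j$, hence $(JP_j)^3 = -JP_j$. This gives $E_j^2 = (JP_j)^4 = E_j$, so each $E_j$ is an idempotent. Condition (i) then gives $E_jE_k = (JP_j)^2(JP_k)^2 = JP_j(JP_jJP_k)JP_k = 0$ for $j \ne k$, and applying $J\cdot$ on the left to (ii) gives $\sum_j E_j = -J\sum_j P_jJP_j = -J^2 = I$. So $\{E_j\}$ are mutually annihilating idempotents summing to $I$.

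Now, computing $-(JB)^2 = -\sum_{j,k} \mu_j\mu_k JP_jJP_k = \sum_j \mu_j^2 E_j$ using condition (i), we get the spectral decomposition of a matrix that depends only on $B$. Since the $\mu_j > 0$ are distinct, the $\mu_j^2$ are exactly the distinct positive eigenvalues of $-(JB)^2$, and the $E_j$ are the associated spectral projectors; hence both $\mu_j$ and $E_j$ are uniquely determined by $B$. Finally, $BE_l = \sum_j \mu_j P_j E_l = \mu_l P_l E_l$ by condition (i); and $P_l = P_l\sum_j E_j = P_lE_l$ since $P_lE_j = -P_l(JP_j)(JP_j) = 0$ for $j\ne l$. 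Thus $P_l = \mu_l^{-1} B E_l$, which depends only on $B$. The identification of $\mu_l$ with symplectic eigenvalues and of $P_l$ with the stated symplectic projections then follows from uniqueness together with the explicit construction in the existence step.

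The main obstacle I anticipate is the uniqueness half, specifically establishing the idempotent identity $(JP_j)^3 = -JP_j$; all the subsequent algebra flows cleanly once this is in hand, but obtaining it requires precisely the interplay between conditions (i) and (ii) and is easy to overlook.
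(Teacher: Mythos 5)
Your proposal is correct, and while your existence step is essentially the paper's (Williamson's theorem plus the symplectic projections of the eigenvector-pair sets for each distinct symplectic eigenvalue), your uniqueness argument is genuinely different. The paper works at the level of vectors: given a competing family $(\eta_j,Q_j)$, it takes the symplectically orthonormal sets $T_j$ underlying the $Q_j$, uses (i) and (iii) to show each $\eta_j$ is a symplectic eigenvalue of $B$ with the pairs in $T_j$ as eigenvector pairs, invokes Corollary \ref{cor3sev} (symplectic orthogonality of eigenvector pairs for distinct symplectic eigenvalues) to get $P_jx=0$ for $x\in T_k$, $k\ne j$, and then checks $P_j$ and $Q_j$ agree on the basis $\bigcup T_j$. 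You instead stay entirely at the matrix level: from (i)--(ii) you extract $P_jJP_jJP_j=-P_j$, hence $(JP_j)^3=-JP_j$, build the mutually annihilating idempotents $E_j=-(JP_j)^2$ with $\sum_j E_j=I$, identify $-(JB)^2=\sum_j\mu_j^2E_j$ as the spectral resolution of a matrix depending only on $B$, and recover $P_j=\mu_j^{-1}BE_j$; all of these computations check out. What your route buys is a self-contained linear-algebra proof that never has to analyze the vectors behind a competing family and that produces a closed formula for $P_j$ in terms of $B$; what the paper's route buys is that the final clause (each $\eta_j$ is a symplectic eigenvalue and $Q_j$ is the projection of a set of eigenvector pairs) drops out directly, whereas you obtain it by combining uniqueness with the constructed family, which is also legitimate. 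The only point worth adding explicitly is that each $E_j\ne0$ (for instance, $P_jE_j=-P_j(JP_j)^2=P_j\ne0$), so every $\mu_j^2$ genuinely occurs as an eigenvalue of $-(JB)^2$ and the spectral identification really does pin down $m$, the $\mu_j$, and the $E_j$.
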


\begin{proof}
Let $\mu_1,\ldots,\mu_m$ be the distinct symplectic eigenvalues of $B$
with multiplicities $k_1,\ldots,k_m,$ respectively.
For every $j=1,\ldots,m$
let $S_j=\{u_{j,1},\ldots,u_{j,k_j},v_{j,1},\ldots,v_{j,k_j}\}$
be a symplectically orthonormal set of symplectic eigenvector pairs of $B$ corresponding to $\mu_j.$
Let $P_j$ be the symplectic projection associated with $S_j.$
By the definition of symplectic projections and Williamson's theorem, we can see that
$\mu_1,\ldots,\mu_m$ and $P_1,\ldots,P_m$ satisfy (i)-(iii).

Now, let $\eta_1,\ldots,\eta_l$ be $l$ distinct positive numbers and $Q_1,\ldots,Q_l$ be symplectic projections that also satisfy (i)-(iii).
For every $j=1,\ldots,l,$ let $T_j=\{x_{j,1},\ldots,x_{j,r_j},y_{j,1},\ldots,y_{j,r_j}\}$
be a symplectically orthonormal set corresponding to $Q_j.$
By using (i) and (iii), we can see that each $\eta_j$ is a symplectic eigenvalue of $B,$
and $(x_{j,i},y_{j,i}),$ $1\le i\le r_j,$ are the symplectically orthonormal symplectic eigenvector pairs corresponding to $\eta_j.$
Condition (ii) implies that $\{\eta_1,\ldots,\eta_l\}$ forms the set of all distinct symplectic eigenvalues of $B.$
By the uniqueness of symplectic eigenvalues, we have $l=m$ and $\{\mu_1,\ldots,\mu_m\}=\{\eta_1,\ldots,\eta_l\}.$
We can assume that $\mu_j=\eta_j$ for all $j=1,\ldots,m.$
By (iii) we see that $r_j$ is equal to the multiplicity of $\mu_j.$
Since symplectic eigenvector pairs corresponding to different eigenvalues are symplectically orthogonal, $S_j$ is symplectically orthogonal to $T_k$ for all $j\ne k.$
Consequently $P_jx=0$ for all $x\in T_k$ and for all $k\ne j.$
Thus for every $(x_{j,i},y_{j,i})$ in $T_j$ we have
$$\mu_jQ_jx_{j,i}=\mu_jJy_{j,i}=Bx_{j,i}=\mu_jP_jx_{j,i}.$$
and since $\mu_j\ne 0,$ $P_jx_{j,i}=Q_jx_{j,i}.$
Similarly $P_jy_{j,i}=Q_jy_{j,i}.$
Since $\cup T_j$ forms a basis for $\mathbb{R}^{2n},$ we get $P_j=Q_j$ for all $j=1,\ldots,m.$
\end{proof}

By using Proposition \ref{propflr1} and the uniqueness of symplectic projections in Proposition \ref{propflr2},
we get the following:

\begin{cor}
Let $A\in\mathbb{P}(2n),$
and let $d$ be its symplectic eigenvalue with multiplicity $m.$
Let $S=\{u_1,\ldots,u_m,v_1,\ldots,v_m\}$
be a symplectically orthonormal set of symplectic eigenvector pairs of $A$ corresponding to $d.$
Then the set $T=\{x_1,\ldots,x_m,y_1,\ldots,y_m\}$
is also a symplectically orthonormal set of symplectic eigenvector pairs corresponding to $d$
if and only if there exists a $2m\times 2m$ orthosymplectic matrix $U$
such that
$$N=MU,$$
where $M$ and $N$ are $2n\times 2m$ matrices with columns $u_1,\ldots,u_m,v_1,\ldots,v_m$
and $x_1,\ldots,x_m,y_1,\ldots,y_m,$ respectively.
\end{cor}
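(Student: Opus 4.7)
The plan is to split the biconditional into two implications, both of which rest on the preceding results about symplectic projections.

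For the forward implication, I will argue by the uniqueness built into Proposition \ref{propflr2}. Let $P_S$ and $P_T$ denote the symplectic projections associated with $S$ and $T$ via \eqref{eqflr1}. Since both $S$ and $T$ consist of symplectically orthonormal symplectic eigenvector pairs of $A$ corresponding to the single eigenvalue $d$, each of $P_S$ and $P_T$ must coincide with the unique symplectic projection attached to $d$ in the decomposition $A=\sum_k\mu_k P_k$ of Proposition \ref{propflr2}. Hence $P_S=P_T$. Proposition \ref{propflr1} then produces an orthosymplectic $U_0$ with $\tilde{M}=\tilde{N}U_0$, where $\tilde{M}$ and $\tilde{N}$ are the matrices of \eqref{eqflr2} with columns $Ju_i,Jv_i$ and $Jx_i,Jy_i$, respectively. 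Since $\tilde{M}=JM$ and $\tilde{N}=JN$ and $J$ is invertible, this translates into $M=NU_0$, and taking $U=U_0^{-1}$ (again orthosymplectic) yields $N=MU$.

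For the reverse implication, assume $N=MU$ with $U$ orthosymplectic. I will first check symplectic orthonormality of $T$ and then the eigenvector pair condition. The symplectic orthonormality of $S$ is equivalent to $M^TJM=J_{2m}$, the standard symplectic form in dimension $2m$; combined with the symplecticity of $U$, this gives $N^TJN=U^TJ_{2m}U=J_{2m}$, so $T$ is symplectically orthonormal. The pair relations $Au_i=dJv_i$ and $Av_i=-dJu_i$ assemble into the matrix identity $AM=-dJMJ_{2m}$. The key structural fact needed next is that every orthosymplectic $U$ commutes with $J_{2m}$, which follows at once from $U^TU=I$ and $U^TJ_{2m}U=J_{2m}$. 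This commutation allows us to compute
\begin{equation*}
AN=AMU=-dJMJ_{2m}U=-dJMUJ_{2m}=-dJNJ_{2m},
\end{equation*}
which expands column by column to $Ax_j=dJy_j$ and $Ay_j=-dJx_j$, establishing the symplectic eigenvector pair structure.

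The principal point to flag is the commutation $UJ_{2m}=J_{2m}U$ for orthosymplectic $U$ in the reverse implication: this is precisely the algebraic reason that the orthosymplectic group (rather than merely the symplectic or the orthogonal group) is the correct symmetry preserving the symplectic eigenvector pair structure. The remainder of the argument is routine bookkeeping, including the translation between the column conventions of Proposition \ref{propflr1} (columns prefixed by $J$) and those of the corollary (bare columns).
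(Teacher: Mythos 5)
Your proof is correct. The ``only if'' half follows exactly the route the paper indicates: both $S$ and $T$ can be completed, as in the existence half of Proposition \ref{propflr2}, by symplectically orthonormal eigenvector-pair sets for the remaining distinct symplectic eigenvalues, so by the uniqueness in that proposition $P_S$ and $P_T$ are both the projection attached to $d$, and Proposition \ref{propflr1} (after the translation between the bare columns of the corollary and the $J$-prefixed columns of \eqref{eqflr2}, which you handle correctly, together with the fact that orthosymplectic matrices form a group) yields $N=MU$. Where you genuinely depart from the paper's indicated route is the converse: rather than feeding $N=MU$ back into the projection machinery (equality $P_T=P_S$ via $JN=JMU$ and $UU^T=I$, and then the identification of the associated pairs as eigenvector pairs as in the uniqueness argument of Proposition \ref{propflr2}), you verify everything directly from the matrix encodings $M^TJM=J_{2m}$ and $AM=-dJMJ_{2m}$, using that an orthosymplectic $U$ commutes with $J_{2m}$. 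This is more elementary and self-contained, and it isolates precisely why orthogonality of $U$ in addition to symplecticity is needed: for symplectic $U$, commuting with $J_{2m}$ is equivalent to orthogonality, so the orthosymplectic group is exactly the symmetry group of the eigenvector-pair structure. The paper's route, by contrast, keeps everything inside the symplectic-projection formalism it has just set up; both arguments are sound.
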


%

We can also verify that if $d_1(B),\ldots,d_n(B)$ are the symplectic eigenvalues of
$B$ and
$\{u_1,\ldots,u_n,v_1,\ldots,v_n\}$ is a corresponding symplectic eigenbasis, then
$$B=\sum\limits_{j=1}^{n}d_j(B)P_j,$$
where $P_j$ is the symplectic projection corresponding to $\{u_j,v_j\}.$
\vskip.1in

For a real vector $x=(x_1,\ldots,x_n),$ we denote by $x^\uparrow$ the vector $(x_1^\uparrow,\ldots,x_n^\uparrow)$ obtained by rearranging the components of $x$ in increasing order, i.e.,
$$x_1^\uparrow\le \cdots\le x_n^\uparrow.$$
We say $x$ is {\it supermajorised} by $y,$ in symbols $x \prec^w y,$ if for $1 
\leq k \leq n$ 
\begin{equation}
 \sum_{j=1}^{k} \, x_j^{\uparrow} \ge \sum_{j=1}^{k} \,y_j^{\uparrow}. 
\label{eqsup}
\end{equation}
We say that $x$ {\it majorises} $y$ (or $y$ {\it is majorised by} $x$)
if the two sides in the above inequalities are equal when $k=n.$

An $n\times n$ matrix $B=\begin{bmatrix}b_{ij}\end{bmatrix}$ is called {\it doubly superstochastic} if there exists an
$n\times n$ doubly stochastic matrix $A=\begin{bmatrix}a_{ij}\end{bmatrix}$ such that $b_{ij}\ge a_{ij}$ for all $i,j=1,\ldots,n.$
See \cite{ando}. It can be seen that the set of doubly superstochastic is a closed and convex subset of $\mathbb{M}(n).$ 
In order to prove Theorem \ref{thm1l}, we will use the following fundamental result in the theory of  majorisation.

\begin{lem}\label{lem1l}
The following two conditions are equivalent:
\begin{itemize}
\item[(i)] An $n \times n$ matrix $A$ is doubly superstochastic.
\item[(ii)] $Ax \prec^{w} x$ for every positive $n$-vector $x.$
\end{itemize}
\end{lem}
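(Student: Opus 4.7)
The forward direction (i) $\Rightarrow$ (ii) is the easy one. Fixing a doubly stochastic $B$ with $B\le A$ entrywise, I note that for any positive vector $x$ the componentwise inequality $Ax\ge Bx$ holds, so the sorted rearrangements satisfy $(Ax)^\uparrow_j\ge (Bx)^\uparrow_j$ for every $j$. The classical Hardy--Littlewood--P\'olya theorem applied to $B$ gives $Bx\prec x$, so $\sum_{j=1}^{k}(Bx)^\uparrow_j\ge\sum_{j=1}^{k}x^\uparrow_j$ for each $k$. Chaining these two inequalities yields $Ax \prec^{w} x$.

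The reverse direction (ii) $\Rightarrow$ (i) requires more work. The plan is to use well-chosen positive test vectors to extract from (ii) the combinatorial Hall-type inequalities that characterise doubly superstochastic matrices via max-flow--min-cut. Write $\mathbf{1}$ for the all-ones vector and $\mathbf{1}_J$ for the characteristic vector of a subset $J\subseteq\{1,\ldots,n\}$. I first check entrywise nonnegativity: if $a_{i_0 j_0}<0$ for some $i_0,j_0$, then the positive vector $x=t\,e_{j_0}+\delta\mathbf{1}$ makes $(Ax)_{i_0}=t\,a_{i_0 j_0}+\delta (A\mathbf{1})_{i_0}\to-\infty$ as $t\to\infty$, violating the $k=1$ supermajorisation instance $\min_i(Ax)_i\ge\min_j x_j=\delta$. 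Hence $a_{ij}\ge 0$ for all $i,j$.

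Next, for arbitrary subsets $I,J\subseteq\{1,\ldots,n\}$, consider the positive test vector $x_\epsilon=\mathbf{1}_J+\epsilon\mathbf{1}_{J^c}$ with $\epsilon\in(0,1)$. Its increasing rearrangement has $n-|J|$ copies of $\epsilon$ followed by $|J|$ copies of $1$. For any subset $L$ with $|L|=n-|I|$, the sum of any $|L|$ entries of $Ax_\epsilon$ dominates the sum of its smallest $|L|$ entries, so combining with (ii) gives $\sum_{i\in L}(Ax_\epsilon)_i\ge\sum_{j=1}^{n-|I|}(x_\epsilon)^\uparrow_j$. Letting $\epsilon\to 0^+$, legitimised by the continuity of the sorting map, yields $\sum_{i\in L,\,j\in J}a_{ij}\ge |J|-|I|$ whenever $|J|\ge|I|$. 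Relabelling $L$ as $I'$ of size $n-|I|$, this is precisely the Hall-type inequality $\sum_{i\in I',\,j\in J}a_{ij}\ge |I'|+|J|-n$ for all $I',J$ with $|I'|+|J|\ge n$; the remaining cases follow from nonnegativity.

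Finally, these are exactly the min-cut conditions for the bipartite flow network with source $s$, sink $t$, capacity-$1$ edges $s\to\text{row}_i$ and $\text{col}_j\to t$, and capacity-$a_{ij}$ edges $\text{row}_i\to\text{col}_j$: the cut placing rows $I_S$ and columns $J_S$ on the source side has capacity $(n-|I_S|)+|J_S|+\sum_{i\in I_S,\,j\notin J_S}a_{ij}$, and after setting $I=I_S$, $J=J_S^c$ the requirement ``cut $\ge n$'' becomes the derived Hall-type inequality. By max-flow--min-cut there is therefore a flow $B$ with $B\le A$ entrywise, $B\mathbf{1}=\mathbf{1}$ and $B^T\mathbf{1}=\mathbf{1}$, i.e., a doubly stochastic matrix dominated by $A$. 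The main technical point is the clean passage to the limit $\epsilon\to 0^+$ in the extraction of the Hall inequalities; once those are in hand, the max-flow--min-cut step is routine.
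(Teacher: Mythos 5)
Your argument is correct, but it is worth noting that the paper does not prove this lemma at all: it is invoked as a known fact from the theory of majorisation, with Ando's survey cited for the notion of doubly superstochastic matrices, and the authors only use the statement as a black box in the proof of Theorem \ref{thm1l}. What you have written is essentially the classical proof one finds in that literature, reconstructed correctly. The forward direction via $Ax\ge Bx$ entrywise, monotonicity of the ordered rearrangement, and Hardy--Littlewood--P\'olya for the doubly stochastic minorant $B$ is exactly right (recall $\prec^w$ here is supermajorisation, so the inequalities go the right way). For the converse, your extraction of nonnegativity from the $k=1$ inequality with $x=t\,e_{j_0}+\delta\mathbf{1}$, and of the defect Hall conditions $\sum_{i\in I,\,j\in J}a_{ij}\ge |I|+|J|-n$ from the test vectors $\mathbf{1}_J+\epsilon\mathbf{1}_{J^c}$, is sound; the limit $\epsilon\to 0^+$ needs no appeal to continuity of sorting, since both sides of the fixed inequality are affine in $\epsilon$. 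The final step, identifying these conditions with the min-cut bound in the bipartite network with unit source and sink capacities and interior capacities $a_{ij}$, and invoking max-flow--min-cut (valid for real capacities by LP duality, with a maximum flow existing by compactness) to produce a doubly stochastic $B\le A$, is the standard route and your cut computation is correct. So your proposal supplies a complete, self-contained proof of a statement the paper leaves to the references; the paper's approach buys brevity, yours buys self-containedness at the cost of importing the flow machinery.
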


For a positive definite matrix $A,$ we denote by $d^\uparrow(A)$ the $n$-tuple of symplectic eigenvalues arranged in increasing order, i.e.,
$$d^\uparrow(A)=(d_1(A),\ldots,d_n(A)).$$

\begin{thm}\label{thm1l}
Let $A,B$ be two $2n\times 2n$ positive definite matrices.
Then
\begin{equation}
d^\uparrow(A+B)-d^\uparrow(A)\prec^w d^\uparrow(B).\label{eq1l}
\end{equation}
\end{thm}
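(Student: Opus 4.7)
The plan is to parametrise by the affine segment $A(t) = A + tB$, $t \in [0,1]$, express each $d_j(A+B) - d_j(A)$ as the time-integral of the derivative given in Remark \ref{remflr}, then use the symplectic projection decomposition $B = \sum_k d_k(B) Q_k$ from Proposition \ref{propflr2} to exhibit a matrix $S$ with $Sd^\uparrow(B) = d^\uparrow(A+B) - d^\uparrow(A)$ componentwise, and finally prove that $S$ is doubly superstochastic so that Lemma \ref{lem1l} delivers the supermajorisation.

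Theorem \ref{thm3.2.pa} applied to $t \mapsto A(t)$ on $[0,1]$ produces a finite set $E \subset [0,1]$ and, on each component of $[0,1] \setminus E$, real analytic selections of the sorted symplectic eigenvalues $d_j(t)$ of $A(t)$ together with a corresponding symplectically orthonormal symplectic eigenbasis $\{(u_j(t), v_j(t))\}_{j=1}^n$. On each such piece Remark \ref{remflr} gives $d_j'(t) = \tfrac{1}{2}(\langle u_j(t), B u_j(t)\rangle + \langle v_j(t), B v_j(t)\rangle)$, and the continuity of $d_j$ together with its piecewise analyticity make it absolutely continuous on $[0,1]$, so
\begin{equation*}
d_j(A+B) - d_j(A) = \tfrac{1}{2}\int_0^1 \bigl(\langle u_j(t), B u_j(t)\rangle + \langle v_j(t), B v_j(t)\rangle\bigr)\, dt.
\end{equation*}
Using Proposition \ref{propflr2} with the rank-two refinement remarked after it, write $B = \sum_{k=1}^n d_k(B) Q_k$, where each $Q_k$ is the symplectic projection onto the $k$-th eigenpair of a symplectic eigenbasis of $B$. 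Substituting and setting
\begin{equation*}
s_{jk} = \tfrac{1}{2}\int_0^1 \bigl(\langle u_j(t), Q_k u_j(t)\rangle + \langle v_j(t), Q_k v_j(t)\rangle\bigr)\, dt \geq 0
\end{equation*}
yields $d_j(A+B) - d_j(A) = \sum_k s_{jk} d_k(B)$, so $(Sd^\uparrow(B))_j = d_j(A+B) - d_j(A)$ for $S = [s_{jk}]$. It remains to show that $S$ is doubly superstochastic.

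By Lemma \ref{lem1l}, double superstochasticity of $S$ is equivalent to $Sx \prec^w x$ for every positive $x \in \mathbb{R}^n$. Fix such an $x$ and set $C_x = \sum_k x_k Q_k$. Because the $Q_k$'s share the defining relations of Proposition \ref{propflr2} (symplectic orthogonality $Q_i J Q_j = 0$ for $i \ne j$ and $\sum_k Q_k J Q_k = J$), the uniqueness part of that proposition identifies $C_x$ as a positive definite matrix whose symplectic eigenvalues are the $x_k$'s; thus $\sum_{l=1}^m d_l(C_x) = \sum_{l=1}^m x_l^\uparrow$ for every $m$. For any $I = \{i_1 < \cdots < i_m\}$ and each $t \notin E$, the pairs $\{(u_{i_l}(t), v_{i_l}(t))\}_{l=1}^m$ form a symplectically orthonormal set, so the trace-minimum variational principle for symplectic eigenvalues from \cite{bj} applied to $C_x$ gives
\begin{equation*}
\tfrac{1}{2}\sum_{l=1}^m \bigl(\langle u_{i_l}(t), C_x u_{i_l}(t)\rangle + \langle v_{i_l}(t), C_x v_{i_l}(t)\rangle\bigr) \;\geq\; \sum_{l=1}^m x_l^\uparrow.
\end{equation*}
Integrating over $t$ and minimising the left-hand side over $I$ produces $\sum_{l=1}^m (Sx)_l^\uparrow \geq \sum_{l=1}^m x_l^\uparrow$, i.e., $Sx \prec^w x$. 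Invoking Lemma \ref{lem1l} once more with $x = d^\uparrow(B)$ then gives \eqref{eq1l}. The main obstacle is precisely this last step: the obvious row- and column-sum bounds $\sum_k s_{jk} \geq 1$ and $\sum_j s_{jk} \geq 1$ (themselves consequences of the variational principle applied to the positive definite symplectic matrices $\sum_k Q_k$ and $M(t)M(t)^T$) are by themselves insufficient for double superstochasticity, and the auxiliary family $\{C_x\}_{x>0}$ of positive definite matrices sharing the symplectic projections of $B$ is the key device that reduces the problem to a single parametrised family of variational inequalities.
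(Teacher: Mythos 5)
Your proposal is correct, and it follows the paper's overall strategy — parametrise by $\varphi(t)=A+tB$, use Theorem \ref{thm3.2.pa} and the derivative formula of Remark \ref{remflr}, decompose $B=\sum_k d_k(B)Q_k$ into rank-two symplectic projections, and integrate to obtain $d^\uparrow(A+B)-d^\uparrow(A)=S\,d^\uparrow(B)$ with the same matrix $S=\overline{M}=\int_0^1\widetilde{M}(t)\,\d t$ that appears in the paper. Where you genuinely diverge is at the decisive step. The paper establishes that each pointwise matrix $\widetilde{M}(t)$ is doubly superstochastic by citing Theorem 6 of \cite{bj} (a symplectic transition matrix gives rise, via the half-sums of squared entries, to a doubly superstochastic matrix), and then uses closedness and convexity of the set of doubly superstochastic matrices to conclude the same for the integral $\overline{M}$ before applying Lemma \ref{lem1l}. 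You instead prove the needed relation $Sx\prec^w x$ for every positive $x$ directly, by forming the auxiliary matrices $C_x=\sum_k x_kQ_k$ (positive definite, with symplectic eigenvalues $x_1,\dots,x_n$) and invoking the trace-minimum variational principle of \cite{bj} at each regular $t$ and each index set $I$, then integrating and minimising over $I$. This buys you independence from the ``symplectic $\Rightarrow$ doubly superstochastic'' theorem and from the convexity/closedness argument, at the price of relying on a different result from the same reference (the variational principle), so the two routes are of comparable depth; your device of the family $\{C_x\}$ is a nice way to convert the subset-sum inequalities into supermajorisation. Two small tidying remarks: the final appeal to Lemma \ref{lem1l} is superfluous — once $Sx\prec^w x$ holds for all positive $x$, taking $x=d^\uparrow(B)$ gives \eqref{eq1l} immediately, and indeed double superstochasticity of $S$ is never needed as such; and the identification of the symplectic eigenvalues of $C_x$ via the uniqueness clause of Proposition \ref{propflr2} strictly requires distinct values, so for repeated $x_k$ you should either group the corresponding projections or simply note that $C_xx_l=x_lJy_l$ and $C_xy_l=-x_lJx_l$, so the eigenbasis of $B$ is a symplectic eigenbasis of $C_x$ with the asserted values by Williamson's theorem. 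Neither point affects the validity of the argument.
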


\begin{proof}
Define the map $\varphi:[0,1]\to\mathbb{P}(2n)$ as
$$\varphi(t)=A+tB.$$
Clearly $\varphi$ is real analytic with $\varphi^\prime(t)=B.$
Let $1\le j\le n,$ and let $d_j(t)=d_j(\varphi(t)).$
By Theorem \ref{thm3.2.pa}, $d_j$ is piecewise real analytic.
Also by the same theorem, we can find a piecewise real analytic symplectic eigenbasis
$\beta(t)=\{u_1(t),\ldots,u_n(t),v_1(t),\ldots,v_n(t)\}$ of $\varphi(t)$ corresponding to $d_1(t),\ldots,d_n(t).$
For any $t$ in $[0,1]$ at which $d_j,$ $u_j$ and $v_j$ are real analytic, we have
\begin{equation}
d_j^\prime(t)=\frac{1}{2}\left(\langle u_j(t),Bu_j(t)\rangle+\langle v_j(t),Bv_j(t)\rangle\right).\label{eqll}
\end{equation}
Let $\mu_1 \leq \ldots \leq \mu_n$  be the symplectic eigenvalues of $B$ and $\beta=\{x_1,\ldots,x_n,y_1,\ldots,y_n\}$ be a corresponding symplectic eigenbasis.
Let $P_j$ be the symplectic projection corresponding to $(x_j,y_j).$
Then $B=\sum\limits_{j=1}^{n}\mu_jP_j.$
Thus by using this expression for $B$ and using \eqref{eqflr1} for $P_k$ in \eqref{eqll}, we get
\begin{eqnarray}
d_j^\prime(t)&=&\sum\limits_{k=1}^{n}\frac{\mu_k}{2}\left(\langle u_j(t),P_ku_j(t)\rangle+\langle v_j(t),P_kv_j(t)\rangle\right)\nonumber\\
&=&\sum\limits_{k=1}^{n}\frac{\mu_k}{2}\left(\langle u_j(t),Jy_k\rangle^2+\langle u_j(t),Jx_k\rangle^2\right.\nonumber\\
& & \ \left. +\langle v_j(t),Jy_k\rangle^2+\langle v_j(t),Jx_k\rangle^2\right).\label{eq2l}
\end{eqnarray}
Since $\beta(t)$ and $\beta$ are symplectic bases of $\mathbb{P}(2n),$ the matrix $M(t)$ with $rs\textrm{th}$ entry 
$$m_{rs}(t)=\begin{cases}
\langle u_j(t),Jx_k\rangle & r=j,s=k,1\le j,k\le n\\
\langle u_j(t),Jy_k\rangle & r=j,s=n+k,1\le j,k\le n\\
\langle v_j(t),Jx_k\rangle & r=n+j,s=k,1\le j,k\le n\\
\langle v_j(t),Jy_k\rangle & r=n+j,s=n+k,1\le j,k\le n
\end{cases}$$
is a symplectic matrix.
Let $\widetilde{M}(t)$ be the $n\times n$ matrix with $jk\textrm{th}$ entry
$$\frac{m_{jk}^2(t)+m_{j(n+k)}^2(t)+m_{(n+j)k}^2(t)+m_{(n+j)(n+k)}^2(t)}{2}.$$
Then by \eqref{eq2l}, we see that $d_j^\prime(t)$ is the $j\textrm{th}$ component of the vector $\widetilde{M}(t)d^\uparrow(B),$
i.e., 
\begin{equation}
d^{\prime}(t)=\widetilde{M}(t)d^\uparrow(B).\label{eq3l}
\end{equation}
where $d^{\prime}(t)=(d_1^{\prime}(t), \ldots, d_n^{\prime}(t))^T.$ Since $d_j,u_j,v_j$ are piecewise real analytic on $[0,1],$
the maps $d_j$ and $\widetilde{M}$ are integrable on $[0,1].$
Denote by $\overline{M},$ the $n\times n$ matrix
$$\overline{M}=\int\limits_{0}^{1}\widetilde{M}(t)\d t.$$
By  (\cite{bj}, Theorem 6) each $\widetilde{M}(t)$ is doubly superstochastic.
Since the set of doubly superstochastic matrices is closed and convex, $\overline{M}$ is also doubly superstochastic.
Integrating \eqref{eq3l}, we get
$$d^\uparrow(A+B)-d^\uparrow(A)=\overline{M}d^\uparrow(B).$$
We finally obtain \eqref{eq1l} by Lemma \ref{lem1l}.
\end{proof}
\begin{cor}
For $A,B\in\mathbb{P}(2n),$ and for all $1\le i_1<\cdots<i_k\le n,$
\begin{equation}
\sum\limits_{j=1}^{k}d_{i_j}(A+B)\ge\sum\limits_{j=1}^{k}d_{i_j}(A)+\sum\limits_{j=1}^{k}d_{j}(B).\label{eq1m}
\end{equation}
In particular,
\begin{equation}
d_j(A+B)\ge d_j(A)+d_1(B),\label{eqflr5}
\end{equation}
and
$$d_j(A+I)\ge d_j(A)+1.$$
Here $I$ denotes the $2n\times 2n$ identity matrix.
\end{cor}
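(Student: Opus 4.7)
The plan is to derive this corollary directly from Theorem \ref{thm1l}, which carries all the analytic content; what remains is a short combinatorial unpacking of the supermajorisation. Theorem \ref{thm1l} asserts $d^\uparrow(A+B) - d^\uparrow(A) \prec^w d^\uparrow(B)$, which by the definition of $\prec^w$ in \eqref{eqsup} gives
\begin{equation*}
\sum_{j=1}^k \bigl(d^\uparrow(A+B) - d^\uparrow(A)\bigr)_j^\uparrow \;\ge\; \sum_{j=1}^k d_j(B) \qquad \text{for all } 1 \le k \le n.
\end{equation*}

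The only additional ingredient is the elementary observation that for any real $n$-tuple $(a_1,\ldots,a_n)$ and any choice of indices $1 \le i_1 < \cdots < i_k \le n$, one has $\sum_{j=1}^k a_{i_j} \ge \sum_{j=1}^k a_j^\uparrow$, since the sum of the $k$ smallest entries is a lower bound for the sum of any $k$ entries of the tuple. I will apply this to the $n$-tuple of differences $a_\ell = d_\ell(A+B) - d_\ell(A)$. Chaining with the displayed supermajorisation inequality then immediately yields \eqref{eq1m}.

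For the specialisations, \eqref{eqflr5} is simply the case $k = 1$, $i_1 = j$ of \eqref{eq1m}. The final inequality $d_j(A+I) \ge d_j(A) + 1$ follows by taking $B = I_{2n}$ in \eqref{eqflr5}: Williamson's theorem applied to the identity (with the symplectic matrix $M = I_{2n}$ and $D = I_n$) shows that all symplectic eigenvalues of $I_{2n}$ equal $1$, so in particular $d_1(I_{2n}) = 1$. I anticipate no genuine obstacle here; once Theorem \ref{thm1l} is available, the proof reduces to one line of rearrangement followed by two direct substitutions.
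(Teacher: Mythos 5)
Your proposal is correct and follows the same route the paper intends: the corollary is deduced directly from Theorem \ref{thm1l} by unpacking the definition of supermajorisation, using that any $k$ entries of the difference tuple sum to at least the $k$ smallest, and then specialising ($k=1$, and $B=I$ with $d_1(I)=1$). The paper leaves this deduction implicit, and your write-up supplies exactly those routine steps with no gaps.
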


When $\{i_1,\ldots,i_k\}$ is the set $\{1,\ldots,k\}$ in \eqref{eq1m},
we obtain the inequalities first proved by Hiroshima. See \cite{bj,h}.
The inequalities \eqref{eqflr5} were proved recently by R. Bhatia in \cite{rbhs} in the case when $A$ and $B$ are of some specific form.

We also point out that the supermajorisation in \eqref{eq1l} cannot be replaced by majorisation.
Let $A=\begin{bmatrix}2 & 1\\1 & 2\end{bmatrix}$ and $B=I_2,$ the $2\times 2$ identity matrix.
The only symplectic eigenvalues of $A,B$ and $A+B$ are
$$d_1(A)=\sqrt{3},\, d_1(B)=1\textrm{ and }d_1(A+B)=2\sqrt{2}.$$
Clearly $d_1(A+B)>d_1(A)+d_1(B).$
\vskip.1in
Following is a simple application of Theorem \ref{thm1l}.

\begin{cor}
For all $k=1,\ldots,n$ and  $1 \leq i_1<\cdots< i_k \leq n,$ the map $A \mapsto \sum\limits_{j=1}^{k}d_{i_j}(A)$ on $\mathbb{P}(2n)$
 has neither a local minimiser nor a local maximiser in $\mathbb{P}(2n).$
In particular, for every $j=1,\ldots,n,$ the map $A\mapsto d_j(A)$ has neither a local minimiser nor a local maximiser in $\mathbb{P}(2n).$
\end{cor}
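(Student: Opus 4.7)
The plan is to exploit the Lidskii-type supermajorisation of Theorem \ref{thm1l} (equivalently, the partial-sum inequalities of the preceding Corollary) by perturbing $A$ in the two directions $+\varepsilon I$ and $-\varepsilon I$, both of which stay inside $\mathbb{P}(2n)$ for small $\varepsilon>0$. The key observation I need is that the $2n\times 2n$ identity matrix $I$ is already in Williamson normal form (take $D=I_n$ in \eqref{eq3}), so every symplectic eigenvalue of $\varepsilon I$ equals $\varepsilon$; hence $\sum_{j=1}^{k} d_j(\varepsilon I)=k\varepsilon$.

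First I would rule out local maximisers. Fix $A\in\mathbb{P}(2n)$, $k$ and indices $1\le i_1<\cdots<i_k\le n$. For any $\varepsilon>0$, apply the Corollary to the pair $(A,\varepsilon I)$ to get
\begin{equation*}
\sum_{j=1}^{k} d_{i_j}(A+\varepsilon I)\;\ge\;\sum_{j=1}^{k} d_{i_j}(A)+\sum_{j=1}^{k} d_{j}(\varepsilon I)\;=\;\sum_{j=1}^{k} d_{i_j}(A)+k\varepsilon.
\end{equation*}
Since $A+\varepsilon I\to A$ in $\mathbb{P}(2n)$ as $\varepsilon\to 0^{+}$, every neighbourhood of $A$ contains a point where the functional is strictly larger than at $A$, so $A$ cannot be a local maximiser.

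Next I would rule out local minimisers by the same trick run backwards. For all sufficiently small $\varepsilon>0$, the matrix $A-\varepsilon I$ is still positive definite, and we may write $A=(A-\varepsilon I)+\varepsilon I$. Applying the Corollary to the pair $(A-\varepsilon I,\varepsilon I)$ yields
\begin{equation*}
\sum_{j=1}^{k} d_{i_j}(A)\;\ge\;\sum_{j=1}^{k} d_{i_j}(A-\varepsilon I)+k\varepsilon,
\end{equation*}
hence $\sum_{j=1}^{k} d_{i_j}(A-\varepsilon I)\le \sum_{j=1}^{k} d_{i_j}(A)-k\varepsilon$, strictly less than the value at $A$. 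Letting $\varepsilon\to 0^{+}$ shows $A$ is not a local minimiser. The special case of a single symplectic eigenvalue $d_j$ is obtained by taking $k=1$ and $i_1=j$.

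There is essentially no obstacle here beyond verifying the harmless fact that $d_j(\varepsilon I)=\varepsilon$ for all $j$, which follows immediately from Williamson's theorem applied to $\varepsilon I$ with $M=I$. Everything else is a direct application of Theorem \ref{thm1l}, and the two perturbations $A\pm\varepsilon I$ make the proof completely symmetric.
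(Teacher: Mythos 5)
Your proposal is correct and is essentially the paper's own argument: apply the Lidskii-type corollary with $B=\epsilon I$ to both $A$ and $A-\epsilon I$, noting $\sum_{j=1}^{k}d_j(\epsilon I)=k\epsilon$, to obtain $\sum_{j=1}^{k}d_{i_j}(A+\epsilon I)>\sum_{j=1}^{k}d_{i_j}(A)>\sum_{j=1}^{k}d_{i_j}(A-\epsilon I)$. No gaps; the verification that the symplectic eigenvalues of $\epsilon I$ all equal $\epsilon$ is the only (harmless) detail the paper leaves implicit.
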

\begin{proof}
Let $I$ denote the $2n \times 2n$ identity matrix.
Let $A \in \mathbb{P}(2n)$ and $\epsilon > 0$ be such that
$A \pm \epsilon I \in \mathbb{P}(2n).$
Then replacing $B$ by $ \epsilon I$ in \eqref{eq1m} we get 
\begin{equation*}
 \sum\limits_{j=1}^{k}d_{i_j}(A+ \epsilon I)\ge\sum\limits_{j=1}^{k}d_{i_j}(A)+k \epsilon
\end{equation*}
Similarly, replacing $A$ by $A - \epsilon I$ and $B$ by $\epsilon I,$ we get
\begin{equation*}
 \sum\limits_{j=1}^{k}d_{i_j}(A)\ge\sum\limits_{j=1}^{k}d_{i_j}(A- \epsilon I)+k \epsilon
\end{equation*}
Consequently, we get 
\begin{align*}
\sum\limits_{j=1}^{k}d_{i_j}(A+ \epsilon I) > \sum\limits_{j=1}^{k}d_{i_j}(A) > \sum\limits_{j=1}^{k}d_{i_j}(A- \epsilon I)
\end{align*}
\end{proof}

A $2n\times 2n$ real positive definite matrix $A$ is a {\it covariance matrix} corresponding to a Gaussian state (or a Gaussian covariance matrix)if and only if it satisfies 
$$A+\frac{\imath }{2}J\ge 0.$$
This is equivalent to saying that all the symplectic eigenvalues $d_j(A)\ge 1/2.$
The von Neumann entropy of a Gaussian state with covariance matrix $A$ is given by\begin{equation}
S(A)=\sum\limits_{i=1}^{n}\left[\left(d_i+\frac{1}{2}\right)\log\left(d_i+\frac{1}{2}\right)-\left(d_i-\frac{1}{2}\right)\log\left(d_i-\frac{1}{2}\right)\right].\label{eq1g}
\end{equation}

\begin{thm}
Let $t\mapsto A(t)$ be a real analytic map from an open interval $I$ to the set of Gaussian covariance matrices.
Then the entropy map $S(t)=S(A(t))$ is monotonically increasing (decreasing) on $I$ if $A^\prime(t)$ is positive (negative) semidefinite for all $t$ in $I.$
\end{thm}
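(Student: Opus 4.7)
The plan is to reduce the entropy monotonicity to a statement about monotonicity of each individual symplectic eigenvalue $d_j(t)=d_j(A(t))$, and then to observe that the single-variable function appearing inside the sum in \eqref{eq1g} is itself monotonically increasing on $[1/2,\infty)$. Since monotonicity on $I$ follows from monotonicity on every compact subinterval, I would fix an arbitrary $[a,b]\subset I$ and work on it throughout.

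On $[a,b]$, I would invoke Theorem \ref{thm3.2.pa} to obtain piecewise real analytic symplectic eigenvalue functions $d_1(t)\le\cdots\le d_n(t)$ together with a piecewise real analytic symplectic eigenbasis $\{u_1(t),\ldots,u_n(t),v_1(t),\ldots,v_n(t)\}$ of $A(t)$. On each open subinterval $J\subset[a,b]$ on which $d_j,u_j,v_j$ are jointly real analytic, $d_j$ is differentiable and $(u_j(t),v_j(t))$ is a normalised symplectic eigenvector pair for $d_j(t)$. Remark \ref{remflr} (equation \eqref{eqflr6}) then applies pointwise to yield
\begin{equation*}
d_j'(t)=\tfrac{1}{2}\bigl(\langle u_j(t),A'(t)u_j(t)\rangle+\langle v_j(t),A'(t)v_j(t)\rangle\bigr),\qquad t\in J.
\end{equation*}
If $A'(t)\ge 0$ on $I$, both inner products are nonnegative, so $d_j'(t)\ge 0$ on every such $J$; if $A'(t)\le 0$, then $d_j'(t)\le 0$ on every such $J$.

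I would then promote piecewise monotonicity to global monotonicity. The finitely many exceptional points of $[a,b]$ (coming from Theorem \ref{thm3.2.pa}) partition $[a,b]$ into finitely many closed subintervals on which $d_j$ is real analytic and has sign-constant derivative, hence is monotone; continuity of $d_j$ on the whole of $[a,b]$ (from continuity of the symplectic eigenvalue maps, established in \cite{bj}) then forces $d_j$ to be monotone on all of $[a,b]$, with the same direction of monotonicity for every $j$.

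Finally, consider $g(d)=(d+\tfrac{1}{2})\log(d+\tfrac{1}{2})-(d-\tfrac{1}{2})\log(d-\tfrac{1}{2})$ on $[1/2,\infty)$. A one-line computation gives $g'(d)=\log\tfrac{d+1/2}{d-1/2}>0$ for $d>1/2$, and $g$ is continuous at $d=1/2$, so $g$ is strictly increasing on $[1/2,\infty)$. Because $A(t)$ is a Gaussian covariance matrix, $d_j(t)\ge 1/2$ for all $t$, so monotonicity of each $d_j$ transfers to monotonicity of $g(d_j(t))$ in the same direction, and summing over $j$ gives the stated monotonicity of $S(t)=\sum_j g(d_j(t))$. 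The only substantive step is the passage from piecewise to global monotonicity, which is straightforward given continuity of $d_j$ and the finiteness of the exceptional set in Theorem \ref{thm3.2.pa}; no genuine obstacle arises, since all the heavy lifting has been done in Sections 3 and 4.
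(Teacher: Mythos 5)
Your proof is correct, but it takes a genuinely different route from the paper's. The paper invokes Theorem \ref{thm_global} to choose $n$ symplectic eigenvalue functions $\tilde{d}_j$ and a corresponding eigenbasis that are real analytic on all of $I$ (unordered branches, so no piecewise structure is needed), applies the derivative formula of Remark \ref{remflr} to get $\tilde{d}_j'(t)\ge 0$, and then differentiates the entropy $S$ directly; this forces it to confront the fact that $g(d)=(d+\tfrac12)\log(d+\tfrac12)-(d-\tfrac12)\log(d-\tfrac12)$ is not differentiable at $d=\tfrac12$, which it handles by splitting off the index set $F$ of branches identically equal to $\tfrac12$, noting by analyticity that the remaining branches hit $\tfrac12$ only on a finite set $E$ of any bounded subinterval, proving $S'\ge 0$ off $E$, and concluding by continuity of $S$. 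You instead work with the ordered eigenvalues via Theorem \ref{thm3.2.pa} on a compact $[a,b]$, prove that each $d_j$ itself is monotone (piecewise sign-constant derivative plus continuity of $d_j$ from \cite{bj} to glue across the finitely many breakpoints), and only then compose with the scalar function $g$, using its monotonicity on $[\tfrac12,\infty)$ rather than its derivative. Both arguments rest on the same derivative formula \eqref{eqflr6}; the trade-off is that the paper avoids the ordering/gluing step but pays with the $F$ and $E$ bookkeeping around the singularity of $g'$ at $\tfrac12$, whereas your composition argument sidesteps that singularity entirely (you never differentiate $S$), at the modest cost of the piecewise-to-global monotonicity step. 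Your observation that monotonicity of the eigenvalues plus monotonicity of $g$ suffices is a clean simplification, and it also yields the slightly stronger intermediate fact that each ordered symplectic eigenvalue $d_j(t)$ is itself monotone under the hypothesis on $A'(t)$.
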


\begin{proof}
Since $t\mapsto A(t)$ is real analytic on $I,$
by Theorem \ref{thm_global}, we can choose the symplectic eigenvalues $\tilde{d}_1(t),\ldots,\tilde{d}_n(t),$
and a corresponding symplectic eigenbasis
$\{\tilde{u}_1(t),\ldots,\tilde{u}_n(t),\tilde{v}_1(t),\ldots,\tilde{v}_n(t)\}$ of $A(t)$ to be real analytic on $I.$
By Remark \ref{remflr}, we have
$$\tilde{d}_j^\prime(t)=\frac{1}{2}\left(\langle \tilde{u}_j(t),A^\prime(t)\tilde{u}_j(t)\rangle+\langle\tilde{v}_j(t),A^\prime(t)\tilde{v}_j(t)\rangle\right).$$
If $A^\prime(t)$ is positive semidefinite, then each $\tilde{d}_j^\prime(t)\ge 0.$
Since the maps $\tilde{d}_j$ are continuous and $S$ is a continuous map of $\tilde{d}_j,$
$t\to S(t)$ is continuous on $I.$
The matrices $A(t)$ are Gaussian covariance matrices for all $t.$
Hence $\tilde{d}_j(t)\ge 1/2$ for all $1\le j\le n$ and for all $t\in I.$
Let $F$ be the set $\{i:\tilde{d}_i(t)=1/2\textrm{ for all }t\in I\}.$
If  $F = \{1,\ldots,n\},$ then $S(t)=0$ for all $t \in I.$
So, let $F\ne\{1,\ldots,n\}.$
 Let $I_0\subseteq I$ be any open bounded interval.
Clearly it suffices to show that $S(t)$ is monotonically increasing on $I_0.$
Consider the set $E=\{t\in I_0:\tilde{d}_j(t)=1/2, 1\le j\le n, j \notin F\}.$
By the analyticity of $\tilde{d}_j,$ we know that $E$ is finite.
For all $t\in I_0\setminus E,$ we have
$$S^\prime(t)=\sum_{\substack{1 \le j \le n \\ j \notin F}}\log\left(\frac{2\tilde{d}_j(t)+1}{2\tilde{d}_j(t)-1}\right)\tilde{d}_j^\prime(t),$$
Hence $S^\prime(t)\ge 0$ if $A^\prime(t)\ge 0$ for all $t\in I_0\setminus E.$
The above fact together with the continuity of $S(t)$ proves the theorem.
\end{proof}

For a matrix $A$ we denote by $\kappa(A)$ the condition number of $A,$ i.e. $\kappa(A)=\|A\|\|A^{-1}\|.$
In our final result we give a perturbation bound for symplectic eigenvalues.
Different perturbation bounds have been given in \cite{bj} and \cite{idel} using very different techniques than ours. 

\begin{thm}
Let $A,B\in\mathbb{P}(2n).$ Then
\begin{equation}
\max\limits_{1\le j\le n}|d_j(A)-d_j(B)|\le K(A,B)\|A-B\|,\label{eq1p}
\end{equation} 
where $K(A,B)=\int\limits_{0}^{1}\kappa(A+t(B-A)) \d t.$
\end{thm}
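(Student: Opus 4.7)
The plan is to reduce to an analytic curve via the straight-line path and apply the derivative formula from Section 3/4. Set $A(t) = (1-t)A + tB$ for $t \in [0,1]$. Since $\mathbb{P}(2n)$ is convex, this path lies in $\mathbb{P}(2n)$, and it is real analytic with $A'(t) = B - A$. By Theorem \ref{thm3.2.pa}, each ordered symplectic eigenvalue $d_j(t) := d_j(A(t))$ is continuous on $[0,1]$ and real analytic off a finite exceptional set, and there is a choice of piecewise real analytic symplectic eigenbasis $\{u_1(t),\ldots,u_n(t),v_1(t),\ldots,v_n(t)\}$ of $A(t)$ corresponding to $d_1(t),\ldots,d_n(t)$.

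At each $t$ where $d_j, u_j, v_j$ are analytic, Remark \ref{remflr} yields
$$d_j'(t) = \tfrac{1}{2}\bigl(\langle u_j(t),(B-A)u_j(t)\rangle+\langle v_j(t),(B-A)v_j(t)\rangle\bigr),$$
so by Cauchy--Schwarz
$$|d_j'(t)| \le \tfrac{1}{2}\bigl(\|u_j(t)\|^2+\|v_j(t)\|^2\bigr)\,\|A-B\|.$$
The key step is to control $\|u_j(t)\|^2+\|v_j(t)\|^2$ by $2\kappa(A(t))$. From Proposition \ref{prop1sev} applied to the normalised pair $(u_j(t),v_j(t))$, one has
$$\langle u_j(t),A(t)u_j(t)\rangle+\langle v_j(t),A(t)v_j(t)\rangle = 2d_j(t).$$
Since $A(t) \ge \|A(t)^{-1}\|^{-1} I$, this bounds $(\|u_j(t)\|^2+\|v_j(t)\|^2)/\|A(t)^{-1}\|$ from above by $2d_j(t)$. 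Next, $d_j(t)$ is an eigenvalue of the Hermitian matrix $\imath A(t)^{1/2}JA(t)^{1/2}$ (Lemma \ref{lem4}), whose operator norm is at most $\|A(t)^{1/2}\|^2\|J\| = \|A(t)\|$; hence $d_j(t) \le \|A(t)\|$. Combining these two bounds gives
$$\|u_j(t)\|^2+\|v_j(t)\|^2 \le 2d_j(t)\|A(t)^{-1}\| \le 2\kappa(A(t)),$$
and therefore $|d_j'(t)| \le \kappa(A(t))\,\|A-B\|$ wherever $d_j$ is differentiable.

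Being continuous and piecewise real analytic on $[0,1]$, each $d_j$ is absolutely continuous there, so the fundamental theorem of calculus (summed across the finitely many analytic pieces) gives
$$|d_j(B)-d_j(A)| = \left|\int_0^1 d_j'(t)\,dt\right| \le \|A-B\|\int_0^1 \kappa(A(t))\,dt = K(A,B)\,\|A-B\|.$$
Taking the maximum over $j$ yields \eqref{eq1p}. The main obstacle I anticipate is the uniform eigenvector norm bound $\|u_j(t)\|^2+\|v_j(t)\|^2 \le 2\kappa(A(t))$: everything else is either a direct appeal to Theorem \ref{thm3.2.pa} and Remark \ref{remflr}, or a standard application of the fundamental theorem of calculus, whereas this bound is where the condition number $\kappa(A(t))$ actually enters and where the normalisation identity of Proposition \ref{prop1sev} is doing the essential work.
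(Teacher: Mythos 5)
Your proposal is correct and follows essentially the same route as the paper: the straight-line path, the derivative formula from Theorem \ref{thm3.2.pa} and Remark \ref{remflr}, the bound $\|u_j(t)\|^2+\|v_j(t)\|^2\le \|A(t)^{-1}\|\,2d_j(t)\le 2\kappa(A(t))$ coming from the normalisation identity, and integration over the piecewise analytic pieces. Your treatment is in fact slightly more explicit than the paper's in justifying $d_j(t)\le\|A(t)\|$ and the fundamental theorem of calculus step, but the argument is the same.
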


\begin{proof}
Define $\varphi:[0,1]\to\mathbb{P}(2n)$ as
$$\varphi(t)=A+t(B-A).$$
As in the proof of Theorem \ref{thm1l}, we see that $d_j(t)=d_j(\varphi(t))$ is piecewise real analytic on $[0,1],$
and we can choose a corresponding piecewise real analytic symplectic eigenbasis $\beta(t)=\{u_1(t),\ldots,u_n(t),v_1(t),\ldots,v_n(t)\}.$
Then for $t$ where $d_j,u_j,v_j$ are real analytic, we have
$$d_j^\prime(t)=\frac{1}{2}\left(\langle u_j(t),(B-A)u_j(t)\rangle+\langle v_j(t),(B-A)v_j(t)\rangle\right).$$
Integrating the above equation, we get
\begin{align}
& |d_j(B)-d_j(A)|\nonumber\\
& = |\int\limits_{0}^{1}d_j^\prime(t)\d t|\nonumber\\
& \le \frac{1}{2}\int\limits_{0}^{1}|\langle u_j(t),(B-A)u_j(t)\rangle+\langle v_j(t),(B-A)v_j(t)\rangle|\d t\nonumber\\
& \le \frac{1}{2}\int\limits_{0}^{1}\left(\|u_j(t)\|^2+\|v_j(t)\|^2\right)\d t\, \|A-B\|.\label{eq2p}
\end{align}
Since $(u_j(t),v_j(t))$ is a normalised symplectic eigenvector pair of $\varphi(t)$ corresponding to $d_j(t),$
\begin{eqnarray*}
\|u_j(t)\|^2+\|v_j(t)\|^2 &\le & \|\varphi(t)^{-1}\|\left(\|\varphi(t)^{1/2}u_j(t)\|^2+\|\varphi(t)^{1/2}v_j(t)\|^2\right)\\
&= & \|\varphi(t)^{-1}\|2d_j(t)\le 2\kappa(\varphi(t)).
\end{eqnarray*}
Thus \eqref{eq2p} gives \eqref{eq1p}.
\end{proof}


\section*{Appendix: Proof of Proposition \ref{prop71}}

\begin{lem}\label{lema1}
Let $\mathcal{X}$ and $\mathcal{Y}$ be Banach spaces,
and let $T:\mathcal{X}^k\to\mathcal{Y}$ be a bounded $k$-linear map.
Suppose $\sum\limits_{n=0}^{\infty}a_{jn}$ is an absolutely convergent series in $\mathcal{X}$ with sum $a_j$ for all $j=1,\ldots,k.$
For each $n,$ let $c_n=\sum\limits_{j_1+\cdots+j_k=n}T(a_{1{j_1}},\ldots,a_{k{j_k}}).$
Then the series $\sum\limits_{n=0}^{\infty}c_n$ is absolutely convergent in $\mathcal{Y}$ and has sum $T(a_1,\ldots,a_k).$
\end{lem}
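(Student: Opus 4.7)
My plan is to prove this in two stages: first show that $\sum c_n$ is absolutely convergent in $\mathcal{Y}$, and then identify its sum with $T(a_1,\ldots,a_k)$ by comparing two natural truncations of the $k$-indexed family $\{T(a_{1j_1},\ldots,a_{kj_k})\}$.

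For the first stage, I would use the boundedness of $T$ to obtain the pointwise estimate $\|c_n\|\le \|T\|\sum_{j_1+\cdots+j_k=n}\|a_{1j_1}\|\cdots\|a_{kj_k}\|$, then invoke Tonelli for the counting measure (valid because all summands are non-negative) to rearrange the resulting double sum as the product
\[
\|T\|\prod_{j=1}^{k}\Bigl(\sum_{i=0}^{\infty}\|a_{ji}\|\Bigr),
\]
which is finite by the assumed absolute convergence of each series. This gives absolute convergence of $\sum c_n$ in $\mathcal{Y}$.

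For the second stage, I would set $S_{j,N}=\sum_{i=0}^{N}a_{ji}$ and use $k$-linearity to expand
\[
T(S_{1,N},\ldots,S_{k,N})=\sum_{0\le j_1,\ldots,j_k\le N}T(a_{1j_1},\ldots,a_{kj_k}).
\]
The continuity of $T$ together with $S_{j,N}\to a_j$ then forces $T(S_{1,N},\ldots,S_{k,N})\to T(a_1,\ldots,a_k)$ as $N\to\infty$. Meanwhile the $N$-th partial sum $\sum_{n=0}^{N}c_n$ equals $\sum_{j_1+\cdots+j_k\le N}T(a_{1j_1},\ldots,a_{kj_k})$. The task reduces to showing that these two families of partial sums have the same limit.

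The main technical step is controlling the discrepancy between them, which is supported on tuples $(j_1,\ldots,j_k)$ with $\max_{i}j_i\le N$ but $j_1+\cdots+j_k>N$. The key observation is that any such tuple must have some coordinate exceeding $N/k$, so the norm of the discrepancy is bounded above by
\[
\|T\|\sum_{i=1}^{k}\Bigl(\sum_{j>N/k}\|a_{ij}\|\Bigr)\prod_{l\ne i}\Bigl(\sum_{j=0}^{\infty}\|a_{lj}\|\Bigr),
\]
which tends to $0$ as $N\to\infty$ by absolute convergence of each $\sum_{i}a_{ji}$. There is no serious obstacle; the only thing to watch is the index bookkeeping between the two truncation schemes (``sum of indices $\le N$'' versus ``maximum index $\le N$''), and the simple inequality $\sum j_i>N\Rightarrow\max j_i>N/k$ handles it cleanly.
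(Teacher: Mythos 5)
Your argument is correct, but it follows a genuinely different route from the paper. The paper first gets absolute convergence of $\sum c_n$ by appealing to Mertens' theorem for Cauchy products of the scalar series $\sum\|a_{jn}\|$, and then identifies the sum by induction on $k$: in the inductive step it freezes the first $k$ slots into bounded linear maps $\tilde{T}_m(x)=\sum_{j_1+\cdots+j_k=m}T(a_{1j_1},\ldots,a_{kj_k},x)$ with $\sum_m\|\tilde{T}_m\|<\infty$, writes the partial sums as $C_n=X_n-E_n$ with $E_n=\sum_{j=0}^n\tilde{T}_j(b-B_{n-j})$, and kills $E_n$ by the classical $\epsilon$-splitting used in Mertens' proof. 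You instead argue directly, with no induction: absolute convergence via the nonnegative rearrangement (Tonelli for counting measure) giving the product bound $\|T\|\prod_j\bigl(\sum_i\|a_{ji}\|\bigr)$, and identification of the sum by comparing the ``triangular'' truncation $\sum_{j_1+\cdots+j_k\le N}$ with the ``cubical'' truncation $T(S_{1,N},\ldots,S_{k,N})$, controlling the discrepancy through the pigeonhole observation that $j_1+\cdots+j_k>N$ forces some $j_i>N/k$, so the tail estimate
\begin{equation*}
\|T\|\sum_{i=1}^{k}\Bigl(\sum_{j>N/k}\|a_{ij}\|\Bigr)\prod_{l\ne i}\Bigl(\sum_{j=0}^{\infty}\|a_{lj}\|\Bigr)\longrightarrow 0
\end{equation*}
finishes the proof together with joint continuity of the bounded multilinear map $T$. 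Your version is shorter and avoids both the induction and the delicate $E_n\to 0$ bookkeeping, at the cost of using full absolute convergence of all $k$ series in the identification step; the paper's Mertens-style inductive step is structured so that only the ``last'' series needs plain convergence there, which is a mild extra generality that the lemma's hypotheses do not require. Both proofs are complete and correct for the statement as given.
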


\begin{proof}
The absolute convergence of the series $\sum\limits_{n=0}^{\infty}c_n$ follows from Merten's theorem for Cauchy products of series of real numbers.
We shall prove that its sum is $T(a_1,\ldots,a_k)$ by induction on $k.$
When $k=1,$ the statement directly follows from the boundedness and linearity of $T.$
Assume that the result holds for $k.$
Let $\sum\limits_{n=0}^{\infty}a_{jn}$ $(1\le j\le k)$ and $\sum\limits_{n=0}^{\infty}b_n$ be absolutely convergent series in $\mathcal{X}$ such that
$a_j=\sum\limits_{n=0}^{\infty}a_{jn}$ and $b=\sum\limits_{n=0}^{\infty}b_n.$
 
For each $m,$ define the map $\tilde{T}_m$ from $\mathcal{X}\to\mathcal{Y}$ as
$$\tilde{T}_m(x)=\sum\limits_{{j_1}+\cdots+{j_k}=m}T(a_{1{j_1}},\ldots,a_{k{j_k}},x).$$
It is easy to see that $\tilde{T}_m$ is linear and bounded with $\|\tilde{T}_m\|\le \|T\|\sum\limits_{{j_1}+\cdots+{j_k}=m}\|a_{1{j_1}}\|\cdots\|a_{k{j_k}}\|.$
Since each $\sum\limits_{n=0}^{\infty}\|a_{jn}\|$ is convergent, by Merten's theorem for Cauchy products of  series of real numbers,
 we see that $\sum\limits_{m=0}^{\infty}\|\tilde{T}_m\|$ converges.
Let $K=\sum\limits_{m=0}^{\infty}\|\tilde{T}_m\|.$
For each $j\ge 0,$ let
$$x_j=\tilde{T}_j(b),$$
and
$$c_j=\sum\limits_{l=0}^{j}\tilde{T}_{j-l}(b_{l}).$$
Clearly $c_j=\sum\limits_{{j_1}+\cdots+{j_k}+l=j}T(a_{1{j_1}},\ldots,a_{k{j_k}},b_l).$
We need to show that $\sum\limits_{j=0}^{\infty}c_j$ is convergent to $T(a_1,\ldots,a_k,b).$
Let $(X_n),$ $(C_n)$ and $(B_n)$ be the sequences of partial sums
of the series $\sum\limits_{j=0}^{\infty}x_j,$ $\sum\limits_{j=0}^{\infty}c_j$ and $\sum\limits_{j=0}^{\infty}b_j,$ respectively.
By induction hypothesis, $\sum\limits_{j=0}^{\infty}x_j$ is absolutely convergent and its sum equals $T(a_1,\ldots,a_j,b).$
Take $d_n=b-B_n$ and $E_n=\sum\limits_{j=0}^{n}\tilde{T}_j(d_{n-j}).$
We have
\begin{eqnarray*}
C_n&=& \sum\limits_{j=0}^{n}\sum\limits_{l=0}^{j}\tilde{T}_l(b_{j-l})\\
&=& \sum\limits_{l=0}^{n}\sum\limits_{j=l}^{n}\tilde{T}_l(b_{j-l})\\
&=& \sum\limits_{l=0}^{n}\tilde{T}_l\left(\sum\limits_{j=0}^{n-l}b_j\right)=\sum\limits_{l=0}^{n}\tilde{T}_l(B_{n-l})\\
&=&\sum\limits_{l=0}^{n}\tilde{T}_l(b)-\sum\limits_{l=0}^{n}\tilde{T}_l(d_{n-l})\\
&=&X_n-E_n.
\end{eqnarray*}
It suffices to show that $E_n\to 0$ as $n\to\infty.$
Since $d_n\to 0,$ we can find a positive number $M$ such that $\|d_n\|\le M$ for all $n\ge 0.$
Given an $\epsilon>0,$ choose $N$ in $\mathbb{N}$ such that for all $n\ge N$
$$\|d_n\|<\frac{\epsilon}{2(K+1)}$$
and
$$\sum\limits_{j=n+1}^{\infty}\|\tilde{T}_j\|<\frac{\epsilon}{2M}.$$
Then for all $n>2N$ we can write
\begin{eqnarray*}
\|E_n\| &\le & \sum\limits_{j=0}^{N}\|\tilde{T}_j\|\|d_{n-j}\|+\sum\limits_{j=N+1}^{n}\|\tilde{T}_j\|\|d_{n-j}\|\\
&<& \frac{\epsilon}{2(K+1)}\sum\limits_{j=0}^{N}\|\tilde{T}_j\|+M\sum\limits_{j=N+1}^{n}\|\tilde{T}_j\|\\
&<& \frac{\epsilon}{2(K+1)}K+M\frac{\epsilon}{2M} \leq \epsilon.
\end{eqnarray*}
This proves $\lim\limits_{n\to\infty}C_n=\lim\limits_{n\to\infty}X_n=T(a_1,\ldots,a_k,b).$
\end{proof}

\noindent{\bf\it Proof of Proposition \ref{prop71}}:
Without loss of generality, we can assume that the interval $I=(-1,1)$ and $t_0=0.$
Since $t\mapsto A(t)$ is real analytic at $t=0,$ there exists an $r>0$ such that
$A(t)$ can be expressed as $A(t)=A(0)+\sum\limits_{j=1}^{\infty}C_jt^j$
for all $|t|<r.$

Here $\sum\limits_{j=1}^{\infty}C_jt^j$ is absolutely convergent for $|t|<r.$
Let $f(A)=A^{1/2}$ be the square root map.
Since each $k\textrm{th}$ order derivative $D^kf(A(0))$ is $k$-linear and bounded,
by $\text{Lemma } \ref{lema1}$
we have
\begin{align*}
D^kf(A(0))(A(t)-A(0),\ldots, & A(t)-A(0)) \\
&=\sum\limits_{n=0}^{\infty}\sum\limits_{j_1+\cdots+j_k=n}t^n D^kf(A(0))(C_{j_1},\ldots,C_{j_k})
\end{align*}

Let $B_{k,n}$ denote the matrix $\sum\limits_{j_1+\cdots+j_k=n}D^kf(A(0))(C_{j_1},\cdots,C_{j_k}).$
For $n<k,$ $B_{k,n}$ be the zero matrix.
We have the following Taylor expansion  of $f$ at $A(0)$ in a neighbourhood $U \subseteq \mathbb{P}(m)$. See \cite{mn}.
$$f(A)=f(A(0))+\sum\limits_{k=1}^{\infty}\frac{1}{k!}D^kf(A(0))(A-A(0),\cdots,A-A(0)).$$
Let $\lambda_0$ be the minimum eigenvalue of $A(0).$
Since $\lambda_0>0,$ the square root function $f$ is real analytic at $\lambda_0,$ i.e.,
there exists an $r_0>0$ such that the series
$\sum\limits_{k=1}^{\infty}\frac{1}{k!}f^{(k)}(\lambda_0)(t-\lambda_0)^k$ is absolutely and locally uniformly convergent in  $(\lambda_0-r_0,\lambda_0+r_0).$
Choose $\delta,$ $0<\delta<r$ such that $\sum\limits_{j=1}^{\infty}\|C_j\|\delta^j < r_0$ and $A(t)\in U$ for all $t\in (-\delta,\delta).$
Thus for all $|t|<\delta,$
\begin{equation}
f(A(t))=f(A(0))+\sum\limits_{k=1}^{\infty}\frac{1}{k!}\sum\limits_{n=k}^{\infty}B_{k,n}t^n.\label{eqa1}
\end{equation}
We show that the iterated sum $\sum\limits_{k=1}^{\infty}\frac{1}{k!}\sum\limits_{n=k}^{\infty}\|B_{k,n}\||t|^n<\infty.$
Let $C$ be the sum $\sum\limits_{j=1}^{\infty}\|C_j\|\delta^j.$
For $|t|<\delta,$ we have
\begin{align*}
& \sum\limits_{n=k}^{\infty}\|B_{k,n}\||t|^n\le\sum\limits_{n=k}^{\infty}\|B_{k,n}\|\delta^n\\
& \le \sum\limits_{n=k}^{\infty}\sum\limits_{j_1+\cdots+j_k=n}\|D^kf(A(0))(C_{j_1},\ldots,C_{j_k})\|\delta^n\\
&\le \|D^kf(A(0))\|\sum\limits_{n=k}^{\infty}\sum\limits_{j_1+\cdots+j_k=n}(\|C_{j_1}\|\delta^{j_1})\cdots (\|C_{j_k}\|\delta^{j_k})\\
&= \|D^kf(A(0))\|C^k.
\end{align*}
The last equality follows from the convergence of Cauchy product of the series $\sum\limits_{j=1}^{\infty}\|C_j\|\delta^j.$
By \cite{bss}
$$\|D^kf(A(0))\|=\|f^{(k)}(A(0))\|=|f^{(k)}(\lambda_0)|.$$
For $|t|<\delta,$ we have $C<r_0$ and hence
$$\sum\limits_{k=1}^{\infty}\frac{1}{k!}\sum\limits_{n=k}^{\infty}\|B_{k,n}\||t|^n\le\sum\limits_{k=1}^{\infty}\frac{1}{k!}|f^{(k)}(\lambda_0)|C^k<\infty.$$
This implies that the iterated sum on the right hand side of \eqref{eqa1} is equal to the sum $\sum\limits_{n=1}^{\infty}\sum\limits_{k=1}^{n}\frac{1}{k!}B_{k,n}t^n.$
This shows that $\sqrt{A(t)}$ can be expressed as the power series
$$\sqrt{A(t)}=\sqrt{A(0)}+\sum\limits_{n=1}^{\infty}\left(\sum\limits_{k=1}^{n}\frac{1}{k!}B_{k,n}\right)t^n\textrm{ for all }|t|<\delta.$$
\qed

\vskip.2in
{\bf\it{Acknowledgement}}: The first author is supported by the SERB MATRICS grant MTR/2018/000554.

\vskip0.2in

\end{document}